\newtheorem{thm}{Theorem}[section]
\newtheorem{cor}[thm]{Corollary}
\newtheorem{lemma}[thm]{Lemma}
\newtheorem{prop}[thm]{Proposition}
\theoremstyle{definition}
\theoremstyle{remark}
\newtheorem{rem}[thm]{Remark}
\newcommand{\w}{\omega}
\newcommand{\abs}[1]{\left|#1\right|}
\newcommand{\T}{\mathbb{T}}
\newcommand{\N}{\mathbb{N}}
\newcommand{\I}{\mathbb{I}}
\newcommand{\Int}{\textrm{int}}
\newcommand{\Bcal}{\mathcal{B}}
\newcommand{\Ccal}{\mathcal{C}}
\newcommand{\Dcal}{\mathcal{D}}
\newcommand{\Hcal}{\mathcal{H}}
\newcommand{\Ocal}{\mathcal{O}}
\newcommand{\Scal}{\mathcal{S}}
\newcommand{\Vcal}{\mathcal{V}}
\newcommand{\mc}{\mathcal}
\newcommand{\eps}{\varepsilon}
\newcommand{\ssq}{\subseteq}
\renewcommand{\:}{\colon}
\DeclareMathOperator{\Cri}{Cri}
\DeclareMathOperator{\Per}{Per}
\DeclareMathOperator{\Tra}{Tra}
\DeclareMathOperator{\Aut}{Aut}
\title{The bifurcation set as a topological invariant for one-dimensional dynamics}
\author[Gabriel Fuhrmann]{Gabriel Fuhrmann}
\author[Maik Gr\"{o}ger]{Maik Gr\"{o}ger}
\author[Alejandro Passeggi]{Alejandro Passeggi}
\address[Gabriel Fuhrmann]{Department of Mathematics, Imperial College London, 180 Queen's Gate,
London SW7 2AZ, United Kingdom }
\email{gabriel.fuhrmann@imperial.ac.uk}
\address[Maik Gr\"{o}ger]{Faculty of Mathematics, University of Vienna, Oskar Morgensternplatz 1,
1090 Vienna, Austria}
\email{maik.groeger@univie.ac.at}
\address[Alejandro Passeggi]{Facultad de Ciencias, Universidad de la Rep\'{u}blica,
Igu\'{a} 4225 esq. Mataojo, Montevideo, Uruguay.}
\email{alepasseggi@gmail.com}
\thanks{This project has received funding from the European Union's Horizon 2020
research and innovation program under the Marie Sklodowska-Curie grant agreement
No 750865.
Further, MG acknowledges support by the DFG grants JA 1721/2-1 and GR 4899/1-1.
Moreover, this project has received funding from the CSIC project
SD 618.
Finally, the authors would like to thank Henk Bruin, Pablo Guarino, Sebastian van
Strien and Bj\"orn Winckler for  related discussions and many valuable comments.}
\begin{document}

\maketitle

\begin{abstract}
	For a continuous map on the unit interval or circle, we define the bifurcation
	set to be the collection of those interval holes whose surviving set is
	sensitive to arbitrarily small changes of their position.
	By assuming a global perspective and focusing on the geometric and topological
	properties of this collection rather than the surviving sets of individual holes,
	we obtain a novel topological invariant for one-dimensional dynamics.

	We provide a detailed description of this invariant in the realm of transitive maps
	and observe that it carries fundamental dynamical information.
	In  particular, for transitive non-minimal piecewise monotone maps,
	the bifurcation set encodes the topological entropy
	and strongly depends on the behavior of the critical points.
\end{abstract}

\section{Introduction}

Given some dynamical system on a topological space and an open subset (called
\emph{hole} in the following), it is natural to study the associated
\emph{surviving set}, that is, the collection of all points which never enter
this subset.
In this framework, the theory of open dynamical systems is, for instance, concerned
with escape rates, conditionally invariant measures and other closely related concepts,
see for example \cite{DemersYoung2005, KellerLiverani2009,Bunimovich2011, Dettmann2011,
AltmannPortelaTel2013,DemersFernandez2016,PollicottUrbanski2017,BruinDemersTodd2018}
for more information and further references.
Recently, there has been an increased interest in understanding families of suitably
parametrized interval holes of one-dimensional maps whose surviving sets fulfill
certain properties, see for instance \cite{CarminatiTiozzo2011,Sidorov2014,
HareSidorov2014,GlendinningSidorov2015,Groeger2015,Clark2016,KalleKongLangeveldLi2018}.
As a matter of fact, this thread of research goes back to the classical work by
Urba\'nski \cite{Urbanski1986}.

In this spirit,  we propose to study the family of all interval holes representing
distinct surviving dynamics as a source of topological invariants.
To be more precise, for a continuous map $f$ on the interval $[0,1]$ or the circle $\T$,
we consider the \emph{bifurcation set} $\Bcal_f$ which is given by all
those intervals whose surviving set can change under arbitrarily small perturbations.
To get a first impression of the bifurcation set, see Figure \ref{f.approx bif set doubling map}
below, where an approximation of $\Bcal_f$ for the doubling map on the circle is
depicted.

Before we state our main results, let us introduce some basic definitions.
Throughout this work, $\I$ refers to $[0,1]$ (in which case we set $\partial \I=\{0,1\}$)
or $\T$ (in which case $\partial \I=\emptyset$).
If $\I=[0,1]$, a hole is given by an open interval $(a,b)$ with
$a, b\in \I\setminus \partial \I$.\footnote{The assumption that $a$ and $b$ avoid the
boundary points $\{0,1\}$ simply reduces certain technicalities and is not of any further importance.
For an explicit study of general continuous maps on $[0,1]$ with interval holes
of the form $[0,t)$ and $(t,1]$ where $t\in[0,1]$, see \cite{Groeger2015}.}
In this case, the collection of holes is naturally parametrized by
\[
	\Delta:=\{(a,b)\in\I\times \I: a< b,\ a,b\not\in\partial\I\}.
\]
If $\I=\T$, then a hole is an open interval of positive orientation from $a$ to $b$.
In this case, the interval holes are naturally parametrized by the set
\[
	\Delta:=\{(a,b)\in\I\times \I: a\neq b\}.
\]
We denote the diagonal in $\I\times\I$ by $\Delta_0:=\{(a,a): a\in\I\}$.
Observe that $\Delta_0$ is explicitly \emph{not} included in $\Delta$.
If not stated otherwise, we consider $\Delta$ equipped with the subspace topology
of the product topology on $\I\times\I$.

The geometric structure of $\Bcal_f$ is constituted 
by a configuration of vertical and horizontal segments.
Let us introduce some notation in order to describe it. 

Given a closed subset $X\subseteq\Delta$, we define 
$\Hcal(X)$ to be the family of
non-trivial maximal horizontal line segments in $X$, and $\Vcal(X)$ to be the family of
non-trivial maximal vertical line segments in $X$.
We define the set of \emph{double points} $\Dcal(X)$ to be the collection of points in $X$
which are in the intersection of an element of $\Hcal(X)$ and an element of $\Vcal(X)$.
The set of \emph{corner points} $\Ccal(X)\subset\Dcal(X)$ is given by those double
points which are endpoints of an element of $\Hcal(X)$ and of an element of $\Vcal(X)$.
Last, given $x\in\bigcup_{H\in \Hcal(X)}H$ ($x\in\bigcup_{V\in \Vcal(X)}V$) we denote the
element of $\Hcal(X)$ ($\Vcal(X)$) containing $x$ by $H_x$ ($V_x$).

Double points will play an important part in retrieving
dynamical information from the bifurcation set.
In particular, this holds for corner points $x=(a_1,a_2)\in X$ whose coordinates are \emph{links},
that is,
there is an element in $\Hcal(X)$ whose second coordinate coincides with $a_1$ and an
element in $\Vcal(X)$ whose first coordinate equals $a_2$.
We refer to such an $x$ as a \emph{step}.
Given a step $x=(a_1,a_2)\in X$, we call the maximal
collection of steps $F_x=\{\ldots, (a_1,a_2),(a_2,a_3),\ldots \}\ssq\Ccal(X)$, 
where for each element $y\in F_x$ there is a finite sequence
$y=y_{1},\ldots,y_{n}=x\in F_x$ such that $y_i$ shares a link with $y_{i+1}$ ($i=1,\ldots, n-1$),
a \emph{stair}.
Note that $F_x$ is well defined and uniquely determined by $x$.
Given $F_x=\{(a_1,a_2),\ldots,(a_{p-1},a_p)\}$ is finite and $\I=[0,1]$, we also
refer to $a_1$ and $a_p$ as \emph{terminal links}.
The \emph{length} of a stair is the cardinality of its links.
Let us point out that the above terminology originates from
the situation described in Theorem \ref{t.main A} \eqref{thmA: 2}:
for any step $x\in \Dcal(\Bcal_f)$ the segments $H_x$ and $V_x$ accumulate at
the diagonal, so that the set $\bigcup_{y\in F_x}(H_y\cup V_y)$ resembles
the shape of a stair (see also Figure~\ref{f.approx bif set doubling map}).

We can now state the first main assertion which is proven in Section \ref{s.thmA}.

\begingroup
\renewcommand\thethm{\Alph{thm}}
\begin{thm}\label{t.main A}
	Assume that $f:\I\to\I$ is continuous, transitive and not minimal.
	Then $\Bcal_f$ is closed and the following hold.
	\begin{enumerate}
		\item $\Bcal_f\neq\emptyset$ and $\Int(\Bcal_f)=\emptyset$. \label{thmA: 1}
		\item All elements of $\Hcal(\Bcal_f)$ and $\Vcal(\Bcal_f)$ accumulate
			at $\Delta_0$ and $\Bcal_f=\bigcup_{H\in \Hcal(X)}H\cup \bigcup_{V\in \Vcal(X)}V$.\label{thmA: 2}
		\item $\Dcal(\Bcal_f)$ is closed and totally disconnected.\label{thmA: 3}
		\item Each endpoint of an element of $\Hcal(\Bcal_f)$ or
			$\Vcal(\Bcal_f)$ is in $\Dcal(\Bcal_f)$.\label{thmA: 4}
		\item $\Bcal_f$ is path-connected. \label{thmA: 5}
		\item 	If $(f_n)_{n\in\N}$ is a sequence of continuous functions on $\I$ converging
			uniformly to $f$, then every accumulation point of $\Bcal_{f_n}$
			(w.r.t. the Hausdorff metric) is contained in $\Bcal_f$. \label{thmA: 6}
        \item Every stair of length $p$ in $\Bcal_f$ corresponds to a unique periodic orbit of period $p$.
        Furthermore, all but finitely many periodic orbits correspond to a stair.\label{thmA: 7}
	\end{enumerate}
\end{thm}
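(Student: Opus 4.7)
The plan is to first establish a dynamical characterization of membership in $\Bcal_f$ that converts the topological notion of ``sensitivity under arbitrarily small perturbations'' into concrete conditions on orbit structure, and then to extract each of the seven claims as a consequence of this characterization together with transitivity and non-minimality.

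The key criterion I would prove is that $(a,b)\in\Bcal_f$ precisely when at least one endpoint is caught by the orbit structure in a non-robust way: either some forward iterate of a point hits $\{a,b\}$, or an orbit accumulates one-sidedly on $\{a,b\}$ in a manner that is altered by translating the hole. Closedness of $\Bcal_f$ then follows because the complementary condition (that all relevant orbits stay uniformly away from $\{a,b\}$ on both sides) is open. This criterion also explains the rigid fiber structure in \eqref{thmA: 2}: if the sensitivity at $(a,b)$ is caused by an orbit impinging on $a$ alone, then every nearby $b'$ on the appropriate side still gives a hole in $\Bcal_f$, producing a vertical segment through $(a,b)$, and symmetrically for $b$; transitivity then forces iterates of every point to visit every neighborhood, which makes these segments accumulate at $\Delta_0$. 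For \eqref{thmA: 1}, non-minimality supplies periodic orbits whose points explicitly realize the criterion, giving non-emptiness, while empty interior follows because a two-dimensional neighborhood of holes cannot all satisfy the relevant coincidence relation, using that iterate pre-images form a meager subset of $\Delta$.

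For \eqref{thmA: 4}, I would argue that each endpoint of a maximal horizontal or vertical segment marks a transition in which side of the accumulation condition becomes active, and any such transition necessitates a transversal segment crossing, placing the endpoint in $\Dcal(\Bcal_f)$. Claim \eqref{thmA: 3} (closedness and total disconnectedness of $\Dcal(\Bcal_f)$) then combines closedness of $\Bcal_f$ with the observation that a nontrivial connected subset of $\Dcal$ would have to lie simultaneously in a horizontal and a vertical segment, forcing it to a single point. Claim \eqref{thmA: 5} follows from \eqref{thmA: 2}: any point of $\Bcal_f$ is path-connected to the diagonal along its containing segment, and the accumulation of segments at $\Delta_0$ lets us join any two points via paths that approach $\Delta_0$ arbitrarily closely. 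For \eqref{thmA: 6}, the orbit-based criterion transfers to uniform limits: an accumulation point of $\Bcal_{f_n}$ inherits the accumulation or pre-image condition from the $f_n$ because uniform convergence preserves finite-time orbit relations and one-sided limits.

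Claim \eqref{thmA: 7} is the deepest and where most of the dynamical content resides. Given a step $(a_1,a_2)$, the link relations combined with the characterization force a horizontal segment ending at height $a_1$ and a vertical segment based at $a_2$ to coexist only when $f(a_1)=a_2$ (after disambiguating orientations via the one-sided accumulation data). Iterating this relation along a stair $F_x=\{(a_1,a_2),\ldots,(a_{p-1},a_p)\}$ identifies $a_1,\ldots,a_p$ as an $f$-orbit, and the closure of the stair (either by returning to $a_1$ on $\T$, or by running into terminal links on $[0,1]$) forces this orbit to be periodic of period $p$. Conversely, any periodic orbit $\{p_1,\ldots,p_n\}$ produces candidate step points $(p_i,f(p_i))$, and only orbits obstructed by a critical point, a boundary effect at $\partial\I$, or a degenerate interaction with the finitely many one-sided branches of $f$ fail to assemble into a stair, explaining the finite-exception clause. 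The main obstacle throughout is precisely the first step: extracting a sharp orbit-based criterion from the abstract perturbative definition of $\Bcal_f$ that is strong enough to simultaneously yield closedness, the one-dimensional segment structure, and the link relations underlying stairs. Once such a characterization is in place, claims \eqref{thmA: 1}--\eqref{thmA: 7} reduce to careful but structural arguments about transitive one-dimensional dynamics.
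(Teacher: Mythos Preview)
Your overall strategy---reducing everything to an orbit-based survival criterion---is in the right spirit, though note that in the paper $\Bcal_f$ is \emph{defined} directly by the condition $a\in\Scal_f(a,b)$ or $b\in\Scal_f(a,b)$, and the perturbative description (constancy of the surviving set on complementary components) is derived afterward. More importantly, there are genuine gaps in your arguments for parts (3), (5), and (7).

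For part (5), connecting two points ``via paths that approach $\Delta_0$ arbitrarily closely'' does not work: $\Delta_0$ is explicitly excluded from $\Delta$, so two vertical segments accumulating at distinct diagonal points $(a,a)$ and $(a',a')$ are not joined by anything in $\Bcal_f$ merely by virtue of that accumulation. The paper instead uses the denseness of periodic points for transitive non-minimal maps on $\I$: between any two neighboring links of a given stair one finds a new periodic orbit whose stair crosses both adjacent steps, and path-connectedness follows by concatenating such crossings. For part (7), your claim that the link relation forces $f(a_1)=a_2$ is incorrect: a step $(a_1,a_2)$ consists of two \emph{spatially adjacent} points of a periodic orbit in the order on $\I$, not dynamically consecutive iterates. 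The paper's mechanism is that the endpoint description in part (4) makes the orbit of $a_1$ accumulate on $a_2$ and vice versa, while the segment structure at a link prevents the orbit of $a_i$ from accumulating on $a_i$ itself; together these force $a_1$ and $a_2$ onto one common periodic orbit. Your converse direction (candidate steps $(p_i,f(p_i))$) is wrong for the same reason---the steps are the pairs of neighboring orbit points in the cyclic order on $\I$. For part (3), a connected set of double points need not sit inside a \emph{single} horizontal and a single vertical segment, so your one-line argument does not go through; the paper uses a Baire category argument to show that a failure of total disconnectedness would force $\Bcal_f$ to have non-empty interior, contradicting part (1).
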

\endgroup

\begin{figure}[h]
	\centering
	\includegraphics[width=0.51\textwidth]{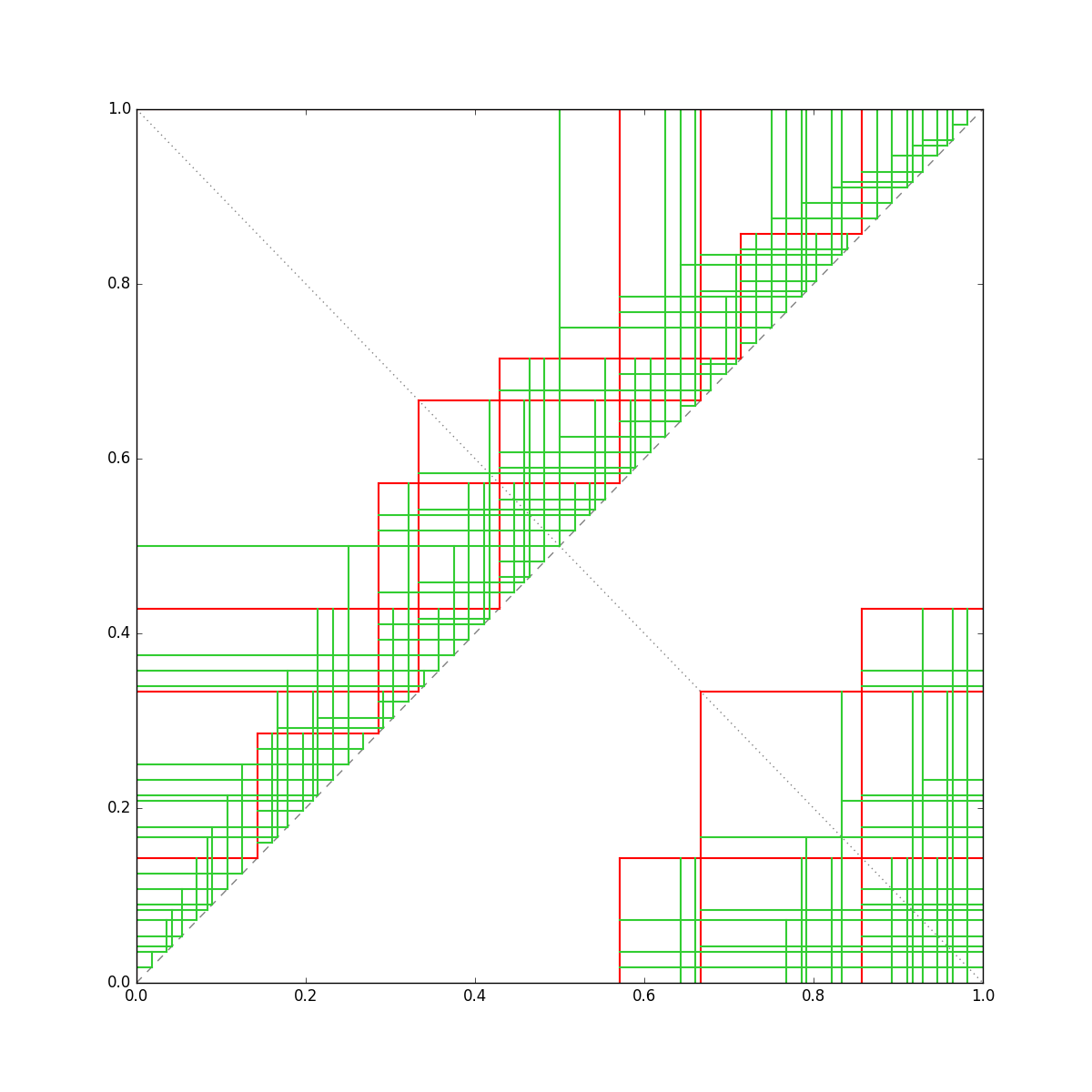}
	\hspace{-0.6cm}
	\includegraphics[width=0.51\textwidth]{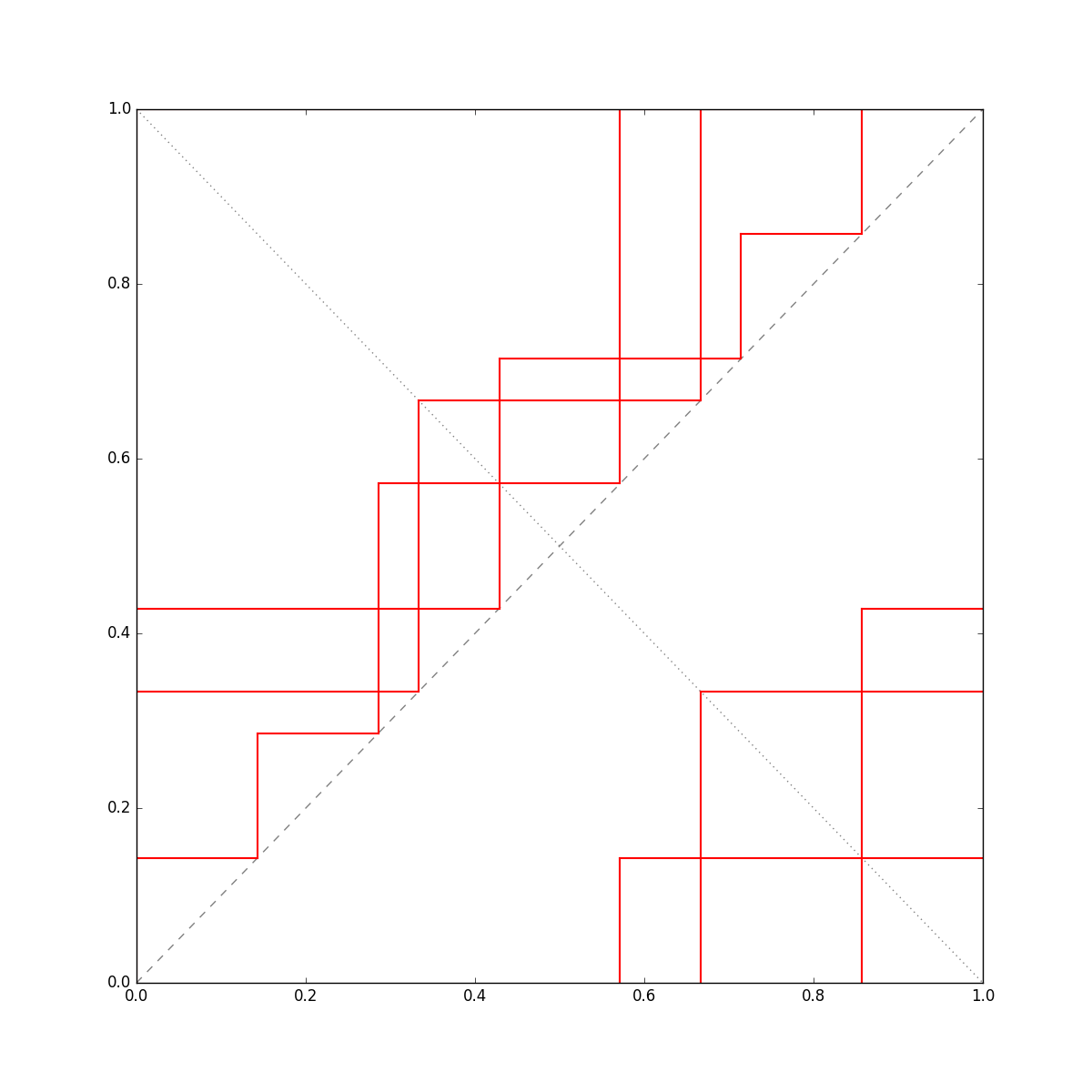}
	\caption{\small The left figure represents an approximation of the bifurcation
		set of the doubling map on the circle.
		The right figure shows the stairs of this bifurcation set corresponding
		to periodic orbits of period at most three.\normalsize}
	\label{f.approx bif set doubling map}
\end{figure}

Observe that point \eqref{thmA: 7} yields the important fact that periodic points and
their periods can be identified in the bifurcation set.
This in particular implies that the topological entropy for transitive non-minimal
piecewise monotone maps can be deduced from the bifurcation set (see
Section \ref{ss.invariants} for more details).

Notice further that the second part of point
\eqref{thmA: 2} implies that the bifurcation set is a collection of horizontal and vertical segments,
while the first part of \eqref{thmA: 2} gives --together with point \eqref{thmA: 4}--
that these segments can essentially be obtained by drawing a horizontal and vertical line from the double points
to the diagonal.\footnote{For $\I=\T$, this is true for all segments.
For $\I=[0,1]$, this is true for all
but those lines in $\Delta$ with arbitrarily small first or second coordinate, see also the previous footnote.}
This observation emphasizes the importance of double points which
is even more prominent due to their close relation to \emph{nice points} introduced in \cite{Martens1994}
(see also Remark~\ref{rem: nice points}).

As we will see, natural representatives of double points originate from the periodic points $\Per(f)$
and the preperiodic points of $f$ (see Proposition~\ref{p.endpointbifpoint} below).
It turns out that periodic and preperiodic orbits are of general importance also
besides the associated double points.
With Theorem~\ref{t.main B}, our second main result, we obtain assumptions which
guarantee that already by drawing vertical and horizontal lines from points in
the bifurcation set with one periodic and preperiodic coordinate and taking the closure of
the respective union of segments in $\Delta$ \emph{recovers} the bifurcation set.

In order to state Theorem~\ref{t.main B}, we need to introduce some further notation.
Let $x=(a,b)$ be a corner point of $\Bcal_f$.
We say that $x$ is \emph{isolated} in $\Bcal_f$ whenever for some
neighborhood $U$ of $x$ in $\Delta$ it holds
\[
	U\cap\Bcal_f=U\cap (H_x\cup V_x).
\]
Moreover, we call $x$ \emph{isolated from below} whenever for some neighborhood
$U$ of $x$ in $\Delta$ it holds for every $(a',b')\in\Bcal_f\cap U\setminus (H_x\cup V_x)$
that
\[
	a'\in\I\setminus(a,b)\quad\textnormal{or}\quad
	b'\in\I\setminus(a,b).
\]
Otherwise we call $x$ \emph{accumulated from below}.

The next statement yields the sensitivity of $\Bcal_f$ on the dynamical behavior
of the critical points $\Cri(f)$ of $f$.
We would like to remark that an essential ingredient of its proof
are shadowing and stability properties of the surviving sets (see Section~\ref{s.proofthmB}).

\begingroup
\renewcommand\thethm{\Alph{thm}}
\begin{thm}\label{t.main B}
	Suppose $f:\I\to\I$ is a continuous, transitive, not minimal and piecewise monotone map.
	Then the following hold.
	\begin{enumerate}
		\item If $\Per(f)\cap\Cri(f)=\emptyset$, then every step is isolated from below.
			Moreover, in case $\Cri(f)$ is empty or contains only transitive points, we have
			that $f$ is a continuity point\footnote{As in Theorem~\ref{t.main A} \eqref{thmA: 6},
			we consider the space of all continuous maps $f:\I\to\I$ equipped with
			the uniform topology, and the space of all non-empty closed subsets of
			$\Delta$ endowed with the Hausdorff metric.} of the bifurcation set and
			that $\Bcal_f$ can be recovered from periodic and preperiodic points.
		\item If $\Per(f)\cap\Cri(f)\neq\emptyset$, then there is at least one step
			accumulated from below or $f$ is a discontinuity point for the bifurcation set.
	\end{enumerate}
\end{thm}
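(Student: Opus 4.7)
The plan rests on the correspondence between stairs and periodic orbits established in Theorem~\ref{t.main A}\,\eqref{thmA: 7}: every step $x=(a,b)\in \Ccal(\Bcal_f)$ is a consecutive pair from a periodic orbit $a_1=a,a_2=b,a_3,\ldots,a_p$ with $f(a_i)=a_{i+1}$. Under this dictionary, the assumption $\Per(f)\cap \Cri(f)=\emptyset$ forces every link of every stair to be non-critical, so some power of $f$ is a local homeomorphism around each link. The overall strategy is to convert this regularity into a local stability statement for the surviving sets near a step, and, conversely, to show that a periodic critical point obstructs this stability in a way that manifests either as accumulation from below, or as Hausdorff instability of $\Bcal_{(\cdot)}$ at $f$.

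For part~(1), I fix a step $x=(a,b)$ with associated orbit $\{a_1,\ldots,a_p\}$. Since no $a_i$ is critical, the shadowing and stability properties of surviving sets announced in Section~\ref{s.proofthmB} apply locally: for every hole $(a',b')$ close to $x$ with $a<a'<b'<b$, the surviving set of $(a',b')$ contains a unique periodic orbit of period $p$ shadowing $\{a_i\}$, and small variations of $(a',b')$ perturb this orbit continuously without producing or destroying any other orbits of bounded period. Consequently the surviving sets are topologically indistinguishable for all such $(a',b')$, so no new bifurcations occur outside $H_x\cup V_x$, which is exactly isolation from below. For the continuity and recovery statements under the stronger hypothesis that $\Cri(f)$ is empty or transitive, I would combine Proposition~\ref{p.endpointbifpoint} (which identifies double points via periodic and preperiodic coordinates) with the density of preperiodic orbits coming from transitivity, to conclude that the horizontal and vertical extensions of periodic/preperiodic double points are dense in $\Bcal_f$; the recovery statement then follows from Theorem~\ref{t.main A}\,\eqref{thmA: 2} and \eqref{thmA: 4}. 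The one-sided inclusion of Theorem~\ref{t.main A}\,\eqref{thmA: 6} is complemented by this density argument together with the stability of non-critical periodic orbits under uniform perturbation, which guarantees that every double point of $\Bcal_f$ is a Hausdorff limit of double points in $\Bcal_{f_n}$, giving continuity at $f$.

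For part~(2), I argue by a dichotomy at a periodic critical point $c$ of period $p$ with associated step $x$. If the dynamics at $c$ is sufficiently degenerate that nearby hole positions pick up infinitely many distinct periodic orbits clustering at the orbit of $c$, these orbits generate stairs accumulating at $x$ from inside the region $\{a<a'<b'<b\}$, so $x$ is accumulated from below. If instead the local picture of $\Bcal_f$ near $x$ is clean, I exploit the non-genericity of critical periodicity: an arbitrarily small uniform perturbation $f_n$ can move $c$ off the critical set or break the periodicity, producing a configuration of corners and stairs near $x$ that stays bounded away from $\Bcal_f$ in the Hausdorff metric, forcing $f$ to be a discontinuity point. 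The main obstacle will be to close this dichotomy rigorously: one needs a local analysis at a critical periodic point showing that whenever shadowing fails, it fails in a way that actually produces accumulation from below in $\Bcal_f$ itself, so that the only alternative is the perturbative instability scenario. Executing this local analysis --- in particular controlling how a hole whose boundary lies near the critical periodic orbit deforms the surviving set under perturbation --- is what I expect to be the technically hardest part.
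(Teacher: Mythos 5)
Your high-level plan points in the right direction --- hyperbolicity at non-critical periodic links, shadowing, and a dichotomy at critical periodic points --- but there are two genuine gaps that prevent the argument from closing.

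First, you never explain \emph{why} a non-critical periodic orbit of a piecewise monotone map should be hyperbolic. A general piecewise monotone map can have non-critical periodic points whose derivative is not available (the map need not be differentiable) or whose local behavior is neutral, and shadowing/stability of surviving sets does not hold for such orbits. The paper's entire proof of Theorem~\ref{t.main B} is routed through Remark~\ref{r.piecewise monotone from uniformly expanding}: by Parry's theorem a transitive piecewise monotone map has positive entropy and is conjugate to a constant-slope map, which is piecewise \emph{uniformly expanding}, and only then does every non-critical periodic orbit become automatically hyperbolic. Without this reduction, the appeal to ``shadowing and stability properties of surviving sets'' in your part~(1) is unsupported. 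Also, your justification of ``isolated from below'' via uniqueness of a shadowing orbit and ``topologically indistinguishable surviving sets'' addresses the wrong thing: membership of $(a',b')$ in $\Bcal_f$ is about whether the \emph{endpoints} $a',b'$ themselves survive, not about the internal structure of $\Scal_f(a',b')$. The paper's Lemma~\ref{l.periodiccorner} argues directly that when $f^p$ is expanding and orientation-preserving near $a$ and $b$, the iterates of $a'$ and $b'$ escape into $(a',b')$, so $(a',b')\notin\Bcal_f$; that step is missing from your account.

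Second, your dichotomy in part~(2) is not actually a dichotomy, and you acknowledge as much. The paper replaces your vague alternatives (``sufficiently degenerate'' versus ``the local picture is clean'') by a concrete trichotomy based on the sign of $f^p$ near the critical periodic point: if $f$ is negative at $a$ and positive at $b$ then Lemma~\ref{l.critpercor} \emph{proves} accumulation from below (via transitivity forcing the orbit of a small interval to hit the hole and land on $\mc O(a)$); in every other configuration, Lemma~\ref{l.critpercor2} \emph{constructs} a uniformly small perturbation $g_n\circ f$ pushing every orbit of a neighborhood of $a$ into the hole, thereby emptying $\Bcal_{g_n\circ f}$ near the step and forcing discontinuity. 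Your proposal identifies the obstacle (``controlling how a hole whose boundary lies near the critical periodic orbit deforms the surviving set under perturbation'') but does not supply the construction that resolves it, so the case that is ``technically hardest'' is exactly the case left open. Until you supply the Parry reduction and the explicit sign-based case analysis with its perturbation construction, the argument does not go through.
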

\endgroup

Our last statement is an application of the above theorems and existing results concerning the family of
restricted tent maps (see Section \ref{s.restricted tent map} for the details).
The presentation here is a simplified version of Theorem~\ref{t.bifsetfamiliy}.

\begingroup
\renewcommand\thethm{\Alph{thm}}
\begin{thm}\label{t.main C}
	Let $(T_s)_{s\in[\sqrt{2},2]}$ be the family of restricted tent maps.
	Then there exist two disjoint and dense subsets of parameters, denoted by
	$\mathcal{I}$ and $\mathcal{J}$ where $\mathcal{I}$ has full measure, such that:
	\begin{enumerate}
		\item For $s\in\mathcal{I}$ every step is isolated from below and $s$ is
			a continuity point of $s\mapsto \Bcal_{T_s}$.
		\item For $s\in\mathcal{J}$ some step is accumulated from below and $s$
			is a discontinuity point of $s\mapsto\Bcal_{T_s}$.
	\end{enumerate}
\end{thm}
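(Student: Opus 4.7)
The plan is to reduce Theorem~\ref{t.main C} to a dichotomy on the orbit of the single critical point $c=1/2$ of $T_s$ via Theorem~\ref{t.main B}. Each $T_s$ with $s\in[\sqrt{2},2]$ is continuous, transitive on its invariant interval, not minimal, and piecewise monotone with $\Cri(T_s)=\{c\}$, so the hypotheses of Theorem~\ref{t.main B} are met throughout the parameter range. Accordingly, I would define
\[
\mathcal{J}:=\{s\in[\sqrt{2},2]\: c\in\Per(T_s)\},\qquad \mathcal{I}:=\{s\in[\sqrt{2},2]\: c\notin\Per(T_s)\text{ and }\overline{\{T_s^n(c)\: n\geq 0\}}=\I\},
\]
which are disjoint by construction. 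Density of $\mathcal{J}$ and full Lebesgue measure of $\mathcal{I}$ are then cited from the classical literature on tent maps: the Markov parameters (those with periodic critical point) are dense in $[\sqrt{2},2]$, while Lebesgue-a.e.\ $s$ admits an ergodic absolutely continuous invariant probability of full support whose typical points have dense orbit, and $c$ itself belongs to the typical set of this measure for a.e.\ $s$. Density of $\mathcal{I}$ is a consequence of its full measure.

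For $s\in\mathcal{I}$, Theorem~\ref{t.main B}\,(1) applies verbatim: $\Cri(T_s)=\{c\}$ consists of a single transitive non-periodic point, hence $s$ is a continuity point of $s\mapsto\Bcal_{T_s}$ and every step is isolated from below.

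For $s\in\mathcal{J}$ we have $\Per(T_s)\cap\Cri(T_s)\neq\emptyset$, so Theorem~\ref{t.main B}\,(2) yields at least one of the two phenomena in the statement. To obtain \emph{both} simultaneously, I would exploit the Markov structure induced by the periodic critical orbit: $T_s$ admits a finite Markov partition whose vertices are the post-critical orbit, and the preimages of this finite set are dense in $\I$. Combining Theorem~\ref{t.main A}\,\eqref{thmA: 7} with Proposition~\ref{p.endpointbifpoint}, these preimages contribute infinitely many double points of $\Bcal_{T_s}$ clustering on each step based at a point of the post-critical orbit, providing accumulation from below. Simultaneously, for $s_n\to s$ chosen so that $c$ is no longer periodic for $T_{s_n}$ (the generic case in a neighborhood), the Markov structure is destroyed, so the family of double points shadowing the post-critical orbit of $T_s$ splits or disappears, and one exhibits a step of $\Bcal_{T_s}$ without nearby analogue in $\Bcal_{T_{s_n}}$, realizing a Hausdorff discontinuity.

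The main obstacle will be establishing \emph{both} conditions in the definition of $\mathcal{J}$ rather than the disjunctive statement directly supplied by Theorem~\ref{t.main B}\,(2). Making this rigorous requires revisiting the shadowing/stability arguments behind Theorem~\ref{t.main B} in the specific tent map setting, together with monotonicity and continuity of the kneading invariant $s\mapsto K(T_s)$ on $[\sqrt{2},2]$, to argue that at a Markov parameter the sensitivity of the post-critical dynamics under perturbation forces genuinely new or missing steps, not merely shifted ones. All remaining ingredients---density of Markov parameters and the Lebesgue-generic stochastic behavior with dense critical orbit---are classical and may be cited.
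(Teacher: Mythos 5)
Your skeleton is essentially the paper's: set $\mathcal{I}$ to be the transitive-critical-orbit parameters (full measure by Brucks--Misiurewicz and Coven--Kan--Yorke, which is Theorem~\ref{thm: transitivity and periodicity of c_s}) and handle it by Theorem~\ref{t.main B}\,(1), then treat periodic-critical parameters for $\mathcal{J}$. That part is fine. However, there is a genuine gap on the $\mathcal{J}$ side, and you have already half-diagnosed it yourself.

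First, your $\mathcal{J}:=\{s : c_s\in\Per(T_s)\}$ is \emph{too large}. The paper's Corollary~\ref{c.cri} shows that a step $(a,b)$ is accumulated from below precisely when it is a \emph{critical} step and $T_s$ is \emph{negative at} $a$ and \emph{positive at} $b$ (in the sense defined in Section~\ref{ss. critical steps}). Nothing guarantees that every parameter with periodic $c_s$ admits a critical step with this particular sign pattern. Accordingly, the paper does not take all Markov parameters; it proves (in the proposition following Theorem~\ref{t.bifsetfamiliy}) that a \emph{dense} subset of them has such a step, by explicitly perturbing to arrange a periodic critical orbit that passes through both slope regions in the required configuration. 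Your mechanism --- preimages of the critical orbit ``clustering on each step'' --- is not the relevant one: density of preimages gives lots of double points in $\Bcal_{T_s}$, but it says nothing about whether those double points sit strictly \emph{inside} a given hole $(a,b)$, which is what ``accumulated from below'' requires. The correct mechanism is local: one produces, via Lemma~\ref{l.critpercor}, points $(x_0,y_0)\in\Bcal_{T_s}$ arbitrarily close to the step $(a,b)$ with $(x_0,y_0)$ inside $(a,b)$, exploiting that the folding at the critical point injects two preimages back onto the critical orbit.

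Second, the discontinuity half for periodic $c_s$ is left as a sketch in your proposal. Note that Theorem~\ref{t.main B}\,(2) only gives the disjunction (a step accumulated from below \emph{or} discontinuity); it does not by itself give discontinuity along the one-parameter family. The paper's Lemma~\ref{l.critpercor2} establishes discontinuity under perturbation by \emph{arbitrary} nearby continuous maps, but to get discontinuity of $s\mapsto\Bcal_{T_s}$ one must check that the specific perturbations $T_{s'}$, $s'\to s_0$, realize the phenomenon. This is the content of Propositions~\ref{prop: derivative of the critical point} and~\ref{prop: discontinuity of Ts at periodic parameters}: a genuinely quantitative estimate that $\left|\frac{d}{ds}T_s^{n_{0,c_{s_0}}}(0)\right|\geq 1/(s-1) > \left|\frac{d}{ds}c_s\right|$, which forces the postcritical point to escape a neighborhood of $c_s$ transversally as $s$ moves, destroying the relevant horizontal segment of $\Bcal_{T_{s_0}}$ on one side of $s_0$. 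Your appeal to ``monotonicity and continuity of the kneading invariant'' points in the right direction morally, but it does not substitute for this computation; without it you cannot exclude that the nearby $\Bcal_{T_s}$ still contain Hausdorff-close surrogates of the disappearing segment.

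In summary: part (1) of your proof is correct and identical in spirit to the paper. Part (2) needs both (a) shrinking $\mathcal{J}$ to the dense subset of Markov parameters for which a critical step with the ``negative at $a$, positive at $b$'' sign pattern exists, proved by explicit construction, and (b) a concrete parameter-derivative argument for the discontinuity of $s\mapsto\Bcal_{T_s}$ rather than an appeal to the abstract (non-parametric) statement of Theorem~\ref{t.main B}\,(2).
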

\endgroup

We close the introduction noting that although the bifurcation set itself is clearly
not a dynamical invariant, we can easily introduce an induced invariant, see
Section \ref{ss.invariants}.
By means of this idea, each topological property of the bifurcation set
turns into a topological invariant.
This aspect as well as the relation with periodic orbits,
topological entropy, and some measure theoretic aspects
are further explained in Section \ref{s.interpretations and invariants}.
From this discussion the following natural question arises:

\begin{center}
\emph{Which dynamical invariants of transitive non-minimal one-dimensional
	dynamics can be obtained from the bifurcation set?}
\end{center}

%%%%%%%%%%%%%%%%%%%%%%%%%%%%%%%%%%%%%%%%%%%%%%%%%%%%%%%%%%%%%%%%%%%%%%%%%%%%%%%%

\section{Interpretation of $\Bcal_f$ and induced invariants}\label{s.interpretations and invariants}

We start with a formal definition of the bifurcation set.
Consider a continuous map $f:\I\to\I$.
The \emph{surviving set} of $f$ with respect to $(a,b)\in\Delta$ is defined as
\[
    \Scal_f(a,b):=\bigcap\limits_{n=0}^{\infty} f^{-n}(\I\backslash
		(a,b))=\left(\bigcup\limits_{n=0}^{\infty}f^{-n}(a,b)\right)^c.
\]
Observe that surviving sets are $f$-invariant.
Further, we define the \emph{bifurcation set} of $f$ as the following set of parameters,
$$ \Bcal_f:=\{(a,b)\in\Delta:\  a\in \Scal_f(a,b)\textnormal{ or }b\in\Scal_f(a,b)\}.$$
Note that if $(a,b)\in \Bcal_f$, both $a$ and $b$ may belong to $\Scal_f(a,b)$.

We omit the obvious proof of the next statement (which was formulated for transitive maps in Theorem~\ref{t.main A}, already).
\begin{prop}\label{p.closed}
	Let $f$ be a continuous self-map on $\I$.
	Then $\Bcal_f$ is closed in $\Delta$.
\end{prop}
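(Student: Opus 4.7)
The plan is to prove the equivalent statement that the complement $\Delta\setminus\Bcal_f$ is open. By the definition of the surviving set, $(a,b)\notin\Bcal_f$ is equivalent to
\[
    a\in\bigcup_{n\ge 0}f^{-n}(a,b)\quad\text{and}\quad b\in\bigcup_{m\ge 0}f^{-m}(a,b),
\]
so there exist integers $n,m\ge 0$ with $f^n(a)\in(a,b)$ and $f^m(b)\in(a,b)$. The strategy is to express $\Delta\setminus\Bcal_f$ as a countable union of sets of the form
\[
    U_{n,m}:=\{(a',b')\in\Delta\,:\, f^n(a')\in(a',b')\text{ and }f^m(b')\in(a',b')\}
\]
and to verify that each $U_{n,m}$ is open in $\Delta$.

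First I would observe that for fixed $n$ the map $(a',b')\mapsto(a',f^n(a'),b')$ is continuous on $\Delta$, and that the condition ``$f^n(a')$ lies in the open interval (respectively, positively oriented open arc) from $a'$ to $b'$'' is an open condition on the triple of coordinates — this works uniformly in both cases $\I=[0,1]$ and $\I=\T$ since in either case the set of triples $(x,y,z)$ with $y$ strictly between $x$ and $z$ (in the appropriate sense) is open in $\I^3$. The analogous argument applies to the condition $f^m(b')\in(a',b')$. Hence $U_{n,m}$ is open.

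Once this is established, the identity $\Delta\setminus\Bcal_f=\bigcup_{n,m\ge 0}U_{n,m}$ exhibits the complement as a union of open sets, finishing the proof. I do not anticipate any serious obstacle: the only point that requires minor care is to formulate the ``strictly between'' condition uniformly on $[0,1]$ and $\T$, but this is immediate from the definition of $\Delta$ in each case, and the continuity of the iterates $f^n$ then does the rest.
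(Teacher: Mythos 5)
Your proof is correct. The paper explicitly omits the proof of this proposition (``We omit the obvious proof\dots''), and the argument you give --- writing $\Delta\setminus\Bcal_f$ as the countable union of the open sets $U_{n,m}$ and checking openness via continuity of the iterates $f^n$ together with the openness of the ``strictly between'' condition in $\I^3$ --- is exactly the natural one the authors had in mind.
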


We next provide an alternative characterization of $\Bcal_f$ as the complement of
the components of constant surviving sets.

\begin{prop}\label{p.constant}
    Suppose $(a,b)$ and $(a',b')$ are points in $\Delta$ belonging to the same connected component of
    $\Bcal_f^c$.
    Then $\Scal_f(a,b)=\Scal_f(a',b')$.
\end{prop}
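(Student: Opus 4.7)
The plan is to prove the stronger fact that the map $(a,b)\mapsto \Scal_f(a,b)$ is \emph{locally constant} on $\Bcal_f^c$. Since a locally constant function on a topological space is constant on each of its connected components, the proposition follows at once from this.

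To establish local constancy, I would fix $(a,b)\in\Bcal_f^c$. By definition of $\Bcal_f$, neither $a$ nor $b$ belongs to $\Scal_f(a,b)$, hence there exist integers $n_a,n_b\ge 1$ with $f^{n_a}(a)\in(a,b)$ and $f^{n_b}(b)\in(a,b)$. Next I would pick $\delta>0$ small enough that both of these iterates lie in the strict inner subinterval $J:=(a+\delta,b-\delta)$, and then invoke continuity of $f^{n_a}$ and $f^{n_b}$ to obtain $\eta\in(0,\delta)$ such that $f^{n_a}(V_a)\subset J$ and $f^{n_b}(V_b)\subset J$, where $V_a:=(a-\eta,a+\eta)$ and $V_b:=(b-\eta,b+\eta)$. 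The neighborhood $U:=(V_a\times V_b)\cap\Delta$ of $(a,b)$ then has two key properties: for every $(a',b')\in U$ the symmetric difference $(a,b)\triangle(a',b')$ is contained in $V_a\cup V_b$, while the common middle segment $J$ sits inside both $(a,b)$ and $(a',b')$ (since $\eta<\delta$ implies $a'<a+\delta$ and $b'>b-\delta$). In particular $f^{n_a}(a')\in J\subset(a',b')$ and $f^{n_b}(b')\in J\subset(a',b')$, so $U\subseteq\Bcal_f^c$ as well.

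To show $\Scal_f(a,b)=\Scal_f(a',b')$ for $(a',b')\in U$, suppose for contradiction that some $x$ lies in $\Scal_f(a,b)\setminus\Scal_f(a',b')$. Then there is $m\ge 0$ with $f^m(x)\in(a',b')\setminus(a,b)\subseteq V_a\cup V_b$. In the case $f^m(x)\in V_a$, applying $f^{n_a}$ places $f^{m+n_a}(x)$ in $J\subset(a,b)$, contradicting $x\in\Scal_f(a,b)$; the case $f^m(x)\in V_b$ is handled analogously via $f^{n_b}$. The reverse inclusion $\Scal_f(a',b')\subseteq\Scal_f(a,b)$ is obtained by the very same argument applied to an iterate landing in $(a,b)\setminus(a',b')$. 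The main conceptual subtlety is the requirement $\eta<\delta$: it is precisely what keeps the common middle $J$ inside the hole under every admissible perturbation, which makes the contradiction argument symmetric between enlarging and shrinking the hole on either side, so that no further case distinction is needed.
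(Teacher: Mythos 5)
Your argument is correct and, at heart, follows the same strategy as the paper: show that $(a,b)\mapsto\Scal_f(a,b)$ is locally constant on $\Bcal_f^c$, using the observation that for $(a,b)\in\Bcal_f^c$ some iterates of $a$ and $b$ land strictly inside the hole, and that this persists for all nearby endpoints $a',b'$. The difference is in packaging. The paper proves a finer, quantitative statement (Lemma~\ref{l.survival set inclusion}) giving nested inclusions of the \emph{truncated} surviving sets $\Scal_f^N$, with an explicit uniform lag $\mathcal{L}$; the proposition then follows by intersecting over $N$. You instead compare the full surviving sets directly, arguing by contradiction that a point of one surviving set escaping the other hole must escape both, once one pushes it a few more steps via $f^{n_a}$ or $f^{n_b}$. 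Your route is a bit shorter and self-contained for this proposition, but the paper's lemma buys more: the uniform inclusion at the level of finite-time sets is exactly what is needed for Corollary~\ref{c.escape rates}, which asserts constancy of upper and lower escape rates on components of $\Bcal_f^c$ — a conclusion about $\mu(\Scal_f^N)$ for all $N$ that cannot be read off from equality of the limiting sets alone. So both proofs are valid; the paper's is deliberately stronger to serve a second purpose.
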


The proof of Proposition \ref{p.constant} is a consequence of the next lemma.
In what follows we set
\[
	\Scal_f^N(a,b):=\bigcap\limits_{n=0}^{N}f^{-n}(\I\backslash (a,b))
\]
and note that $\Scal_f(a,b)=\bigcap_{N\in\N}\Scal_{f}^N(a,b)$.

\begin{lemma}\label{l.survival set inclusion}
    Suppose $(a,b)\in \Bcal_f^c$.
    Then there is $\eps>0$ and $\mc L\in \N$ such that
	\[
		\Scal_f^{N+2\mc L}(a,b)\ssq \Scal_f^{N+\mc L}(a',b')\ssq\Scal_f^N(a,b),
	\]
	for all $(a',b')\in B_{\eps}(a,b)$ and all $N\in \N$.
\end{lemma}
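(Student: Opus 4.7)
The hypothesis $(a,b)\in\Bcal_f^c$ means that neither $a$ nor $b$ survives, so there exist minimal integers $L_a, L_b\in\N$ with $f^{L_a}(a)\in(a,b)$ and $f^{L_b}(b)\in(a,b)$. Set $\mathcal L:=\max\{L_a,L_b\}$. Since $f^{L_a}(a)$ and $f^{L_b}(b)$ are interior points of the hole $(a,b)$ and since $f^{L_a},f^{L_b}$ are continuous, I can choose $\eps>0$ so small that the following three conditions hold simultaneously for every $(a',b')\in B_\eps(a,b)$ and every $x\in B_\eps(a)$, $y\in B_\eps(b)$:
\begin{equation*}
f^{L_a}(x)\in(a,b)\cap(a',b'),\qquad f^{L_b}(y)\in(a,b)\cap(a',b'),\qquad (a,b)\triangle(a',b')\ssq B_\eps(a)\cup B_\eps(b).
\end{equation*}
The last condition is automatic for small $\eps$ (it comes from $|a-a'|,|b-b'|<\eps$), while the first two are a direct application of continuity around the finite orbit segments $\{a,f(a),\ldots,f^{L_a}(a)\}$ and $\{b,\ldots,f^{L_b}(b)\}$.

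Both inclusions then follow by the same short contradiction argument. For the right inclusion $\Scal_f^{N+\mathcal L}(a',b')\ssq\Scal_f^N(a,b)$, suppose some $z\in\Scal_f^{N+\mathcal L}(a',b')$ satisfies $f^k(z)\in(a,b)$ for some $0\le k\le N$. Because $f^k(z)\notin(a',b')$, it must lie in the symmetric difference $(a,b)\setminus(a',b')$, hence within $\eps$ of $a$ or $b$. In the first case the choice of $\eps$ forces $f^{k+L_a}(z)\in(a',b')$ with $k+L_a\le N+\mathcal L$, contradicting $z\in\Scal_f^{N+\mathcal L}(a',b')$; the case near $b$ is symmetric. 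The left inclusion $\Scal_f^{N+2\mathcal L}(a,b)\ssq\Scal_f^{N+\mathcal L}(a',b')$ is dual: if $z\in\Scal_f^{N+2\mathcal L}(a,b)$ but $f^k(z)\in(a',b')\setminus(a,b)$ for some $0\le k\le N+\mathcal L$, then $f^k(z)$ lies within $\eps$ of $a$ or $b$, and the same continuity estimate forces $f^{k+L_a}(z)$ (or $f^{k+L_b}(z)$) into $(a,b)$ with total index at most $N+2\mathcal L$, again a contradiction.

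The only real subtlety is calibrating $\eps$ so that all three uniform estimates above hold at once, which amounts to choosing $\eps$ smaller than the distance from $f^{L_a}(a)$ and $f^{L_b}(b)$ to $\{a,b\}$ and then shrinking further using uniform continuity of $f^{L_a}$ and $f^{L_b}$ on compact neighborhoods of $a$ and $b$. On the circle ($\I=\T$) no further care is needed; on $\I=[0,1]$ the standing assumption $a,b\notin\partial\I$ guarantees that $B_\eps(a,b)\ssq\Delta$ for small enough $\eps$, so the argument is unaffected. This is the only delicate bookkeeping and, once done, the two inclusions drop out immediately.
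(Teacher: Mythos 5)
Your proof is correct and uses essentially the same key idea as the paper: choose $L_a, L_b$ so that $f^{L_a}$ and $f^{L_b}$ map small neighborhoods of $a$ and $b$ uniformly into the hole, then push points that land in the symmetric difference $(a,b)\triangle(a',b')$ forward into whichever hole they must avoid, at a cost of at most $\mc L$ extra iterates. The paper organizes the argument by chaining through the intermediate holes $(a',b)$ and $(a,b')$ under the case assumption $a\le a'$, $b\le b'$; you avoid that case-split by arguing directly with the symmetric difference, which is a mild streamlining of the same argument rather than a different route.
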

\begin{proof}
 Note that for $(a,b)$ as in the assumptions, there is $\eps>0$ with $B_\eps(a,b)\ssq \Bcal_f^c$ such that there are $\ell_a$ and $\ell_b$ in $\N$
 with $f^{\ell_a}(B_\eps(a)),f^{\ell_b}(B_\eps(b))\ssq (a+\eps,b-\eps)$.
 Let $\mc L:=\max \{\ell_a,\ell_b\}$.

 Consider $(a',b')\in B_\eps(a,b)$ and suppose $a\leq a'$ and $b\leq b'$ (the other cases work similarly).
 Trivially, $\Scal_f^{N+2\mc L}(a,b)\ssq\Scal_f^{N+2\mc L}(a',b)$.
 Next, observe that if $x\in (a',b')\setminus(a',b)$, then by definition of $\ell_b$, we have $f^{\ell_b}(x)\in(a',b')$ and hence
 $\Scal_f^{N+2\mc L}(a',b)\ssq\Scal_f^{N+2\mc L-\ell_b}(a',b')\ssq\Scal_f^{N+\mc L}(a',b')$.
 Likewise, we see that $\Scal_f^{N+\mc L}(a',b')\ssq \Scal_f^{N}(a,b')$ which clearly yields
 $\Scal_f^{N+\mc L}(a',b')\ssq \Scal_f^{N}(a,b)$.
 This finishes the proof.
\end{proof}

Observe that with $\eps$ and $\mc L$ as above,
we hence have for all $(a_0,b_0)$ and $(a_1,b_1)$ in $B_\eps(a,b)\ssq \Bcal_f^c$ and all $N\in \N$
that $\Scal_f^{N+4\mc L}(a_0,b_0)\ssq \Scal_f^{N+2\mc L}(a_1,b_1)\ssq\Scal_f^N(a_0,b_0)$.
This immediately yields Proposition \ref{p.constant}.

It is immediate that $\Bcal_f=\emptyset$ for $f:\I\to\I$ minimal.
Notice that Proposition \ref{p.constant} offers the converse of this
statement.\footnote{More precisely, in the interval case, Proposition \ref{p.constant}
yields transitivity for all points except $0$ and $1$.
Yet, denseness of periodic points for transitive maps implies minimality of $f$, see
the remark before Proposition \ref{prop: stairs are path-connected}.
Further, recall that there are no minimal continuous maps on $[0,1]$,
i.e., $\Bcal_f$ is always non-empty for $\I=[0,1]$.}
This also yields the first part of point \eqref{thmA: 1} of Theorem \ref{t.main A}.

\begin{cor}
 $\Bcal_f=\emptyset$ if and only if $f$ is minimal.
\end{cor}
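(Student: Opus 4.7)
The easy direction is already observed in the text: if $f$ is minimal, then for every hole $(a,b)\in\Delta$ the forward orbit of every point eventually enters $(a,b)$, so $\Scal_f(a,b)=\emptyset$ and a fortiori $a,b\notin\Scal_f(a,b)$; hence $\Bcal_f=\emptyset$.

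For the converse, my plan is to apply Proposition~\ref{p.constant} globally. Suppose $\Bcal_f=\emptyset$, so that $\Bcal_f^c=\Delta$. A quick verification shows that $\Delta$ is connected in both settings: for $\I=[0,1]$ it is an open triangle in $(0,1)\times(0,1)$, and for $\I=\T$ the space $\T^2\setminus\Delta_0$ is homeomorphic (via $(a,b)\mapsto(a,b-a)$) to $\T\times(\T\setminus\{0\})$, hence connected. Consequently, Proposition~\ref{p.constant} yields a single set $S\ssq\I$ with $\Scal_f(a,b)=S$ for every $(a,b)\in\Delta$.

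Next I would use that every $x\in\I\setminus\partial\I$ lies in some hole $(a,b)\in\Delta$, so $x\in f^{-0}(a,b)\ssq\bigcup_{n\geq 0} f^{-n}(a,b)=\I\setminus\Scal_f(a,b)$, hence $x\notin S$. Therefore $S\ssq\partial\I$. In the circle case $\partial\I=\emptyset$, so $S=\emptyset$; unwinding the definition of $\Scal_f$, this means that for every $(a,b)\in\Delta$ and every $y\in\T$ there is some $n\geq 0$ with $f^n(y)\in(a,b)$, i.e., every forward orbit is dense. This is exactly minimality of $f$.

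The only genuine obstacle is the interval case, where a priori $S$ could still be contained in $\{0,1\}$, which would not give minimality. My resolution is the one indicated in the footnote preceding the corollary: for $\I=[0,1]$ one shows analogously that every $x\in(0,1)$ has a forward orbit that hits every subinterval $(a,b)\subset(0,1)$, yielding transitivity; but then the classical fact that transitive interval maps have dense periodic points is incompatible with every point of $(0,1)$ having a dense orbit. Hence $\Bcal_f\neq\emptyset$ whenever $\I=[0,1]$, and the equivalence (vacuously on the minimality side) holds. This last ingredient — essentially that there are no minimal continuous self-maps of $[0,1]$ — is the one place where the argument relies on external one-dimensional theory rather than just on Proposition~\ref{p.constant}.
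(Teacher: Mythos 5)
Your proposal is correct and follows exactly the route the paper sketches in the footnote preceding the corollary: use connectedness of $\Delta$ together with Proposition~\ref{p.constant} to extract a single common surviving set $S$, observe $S\ssq\partial\I$, conclude minimality directly for $\I=\T$, and dispose of the interval case by invoking dense periodic points of transitive interval maps (equivalently, the non-existence of minimal self-maps of $[0,1]$). No gaps; you have simply written out in full the argument the paper compresses into its footnote.
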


Recall that given a probability measure $\mu$ on $\I$, the
\emph{(exponential) escape rate} of a hole $(a,b)$ with respect to $\mu$ is defined as
\[
	\rho(\mu,(a,b)):=-\lim_{N\to\infty}\frac{1}{N}\log\mu(\Scal_f^N(a,b)).
\]
If the above limit does not exist, we may likewise consider the \emph{upper} and
\emph{lower escape rate} by considering the $\limsup$ and $\liminf$, respectively.
For more information about escape rates and related concepts, see the
references at the beginning of the introduction.

Another dynamical characterization of the bifurcation set is the following which is again a consequence of
Lemma~\ref{l.survival set inclusion}.
\begin{cor}\label{c.escape rates}
	For every probability measure $\mu$ on $\I$, the lower and upper escape rate
	are constant on each connected component of the complement of $\Bcal_f$.
\end{cor}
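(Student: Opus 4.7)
The plan is to show that both the upper and lower escape rates are locally constant as functions of the hole on $\Bcal_f^c$, and then invoke the fact that a locally constant function on a connected set is constant. Since $\Bcal_f$ is closed (Proposition~\ref{p.closed}), its complement is open, and this will directly give the statement on each connected component.

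The workhorse is Lemma~\ref{l.survival set inclusion}. Given $(a,b)\in\Bcal_f^c$, pick the associated $\eps>0$ and $\mc L\in\N$. Applying the monotone set function $\mu$ to the chain
\[
\Scal_f^{N+2\mc L}(a,b)\ssq\Scal_f^{N+\mc L}(a',b')\ssq\Scal_f^{N}(a,b)
\]
for any $(a',b')\in B_\eps(a,b)$ and taking $-\log(\cdot)$ yields
\[
-\log\mu\bigl(\Scal_f^{N+2\mc L}(a,b)\bigr)\ \geq\ -\log\mu\bigl(\Scal_f^{N+\mc L}(a',b')\bigr)\ \geq\ -\log\mu\bigl(\Scal_f^{N}(a,b)\bigr),
\]
with the convention $-\log 0=+\infty$ (note that the inclusions force these three quantities to be simultaneously finite or simultaneously infinite for large $N$, so this case is handled uniformly). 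Dividing by $N$ and rewriting the outer terms as
\[
\frac{-\log\mu(\Scal_f^{N+2\mc L}(a,b))}{N+2\mc L}\cdot\frac{N+2\mc L}{N}\quad\text{and}\quad\frac{-\log\mu(\Scal_f^{N}(a,b))}{N},
\]
the prefactors $(N+2\mc L)/N$ and (after the substitution $M=N+\mc L$ in the middle term) $M/(M-\mc L)$ converge to $1$. Taking $\limsup_{N\to\infty}$ (respectively $\liminf_{N\to\infty}$) therefore sandwiches the upper (respectively lower) escape rate at $(a',b')$ between two copies of the corresponding escape rate at $(a,b)$. This forces equality, so the escape rates are constant on $B_\eps(a,b)\cap\Bcal_f^c$.

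Hence the maps $(a,b)\mapsto\limsup_N -\frac{1}{N}\log\mu(\Scal_f^N(a,b))$ and $(a,b)\mapsto\liminf_N -\frac{1}{N}\log\mu(\Scal_f^N(a,b))$ are locally constant on the open set $\Bcal_f^c$, and consequently constant on each of its connected components. The only minor subtlety is keeping track of the shift by $\mc L$ in the index and verifying that the $+\infty$ case is preserved by the inclusions; both are taken care of cleanly by the double sandwich of Lemma~\ref{l.survival set inclusion}, which is what makes the lemma exactly the right tool. No further dynamical input is needed.
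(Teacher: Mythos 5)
Your argument is correct and is exactly the route the paper intends: the authors merely state that the corollary is "again a consequence of Lemma~\ref{l.survival set inclusion}", and your proof fills in precisely the missing details — applying $\mu$ to the double sandwich, passing to $-\log(\cdot)/N$, absorbing the $\mc L$-shift in the denominator, and then using that a locally constant function on the open set $\Bcal_f^c$ is constant on connected components. Your remark handling the $-\log 0=+\infty$ case (once $\mu(\Scal_f^N)$ vanishes it stays zero, and the inclusions propagate this between $(a,b)$ and $(a',b')$) is a sensible precaution the paper leaves implicit.
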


Clearly, this result remains true when we consider non-exponential escape rates, too.

\subsection{The bifurcation set as a strict invariant and deduced invariants}\label{ss.invariants}

In the following, we discuss different dynamical invariants involved with the bifurcation set.
First, let us assume that $f$ and $g$ are \emph{conjugate}, i.e., $\pi\circ f=g\circ\pi$
where $\pi:\I\to\I$ is a homeomorphism.
Then $\Bcal_g=\{(\pi(a),\pi(b))\in\Delta\: (a,b) \in \Bcal_f\}$ if
$\pi$ is order preserving and
$\Bcal_g=\{(\pi(b),\pi(a))\in\Delta\: (a,b) \in \Bcal_f\}$ otherwise.
Hence, the bifurcation sets of conjugate maps are homeomorphic via a uniformly continuous
self-homeomorphism on $\Delta$.

Now, for subsets $X,Y\subseteq\Delta$ we can define an equivalence relation by setting $X\sim Y$
iff there is a uniformly continuous homeomorphism $p:\Delta\to\Delta$ with $p(X)=Y$.
Then the equivalence class $[\Bcal_f]$ defines a topological dynamical
invariant for $f$.
Furthermore, any topological property of $\Bcal_f$, preserved under uniformly
continuous homeomorphisms, is a dynamical invariant of $f$.
In this spirit, if we start from $[\Bcal_f]$, we can easily index the stairs and
their lengths in $\Bcal_f$.
Accordingly, in the light of point \eqref{thmA: 7} of Theorem \ref{t.main A}, we can index the periodic
orbits (all but finitely many if $\I=[0,1]$) and their periods by an inspection of $[\Bcal_f]$ for
transitive maps.

In particular, we can deduce for a transitive
non-minimal piecewise monotone map $f:\I\to\I$ that its topological entropy $h(f)$
can be recovered from $\Bcal_f$.
For this recall that a continuous map $f:\I\to\I$ is called \emph{piecewise monotone}
if there are finitely many intervals $I_1,\ldots,I_n$ in $\I$ with $\I\ssq \bigcup_{\ell=1}^n I_\ell$
such that $f$ is monotone on each ${I_\ell}$.
For this kind of maps we have that
\[
	h(f)\leq\limsup\limits_{n\to\infty}\frac{1}{n}\log\#\{x\in\I: f^n(x)=x\},
\]
see \cite[Corollaries 3 and 3']{MisiurewiczSzlenk1980}.
Moreover, in Remark \ref{r.piecewise monotone from uniformly expanding} we explain
that every transitive non-minimal piecewise monotone map is conjugate to a map with
constant slope.
This in turn implies that each monotone piece of $f$ intersects the diagonal at
most one time.
Accordingly, we get $h(f)=\limsup_{n\to\infty}1/n\log\#\{x\in\I: f^n(x)=x\}$
(see for example \cite[p.\ 218]{AlsedaLlibreMisiurewicz2000} for more details) and we
obtain the desired property of $\Bcal_f$.

Another dynamical invariant visible in $\Bcal_f$ for a continuous self-map $f$
on $\I$ is the group of automorphisms $\Aut(f)$.
These are all homeomorphisms $\pi:\I\to\I$ commuting with $f$, i.e., $f\circ\pi=\pi\circ f$.
Each $\pi\in\Aut(f)$ defines a map $\hat\pi:\Delta\to\Delta$ mapping $(a,b)$ to
$(\pi(a),\pi(b))$ or $(\pi(b),\pi(a))$ depending on whether $\pi$ is order
preserving or reversing, respectively.
Accordingly, we get that $\Bcal_f$ is invariant under $\hat\pi$ and this means
$\pi$ represent a certain symmetry of the bifurcation set.
For an example of this observation, see Figure \ref{f.approx bif set doubling map},
where the automorphism $\pi=-\mathrm{Id}$ of the doubling map is visible in the
symmetry along the off-diagonal.

Finally, from an ergodic point of view, let us briefly come back to the so-called nice points from \cite{Martens1994}
which were introduced to study possible ergodic behavior of $S$-unimodal
maps on the interval.
In particular, it is known that every $S$-unimodal map without periodic attractors
has the weak-Markov property, which implies the non-existence of positive
Lebesgue measure attracting Cantor sets.
Nice points are essential for proving this assertion and a simple inspection of
their definition shows that they can be derived from the bifurcation set.

%%%%%%%%%%%%%%%%%%%%%%%%%%%%%%%%%%%%%%%%%%%%%%%%%%%%%%%%%%%%%%%%%%%%%%%%%%%%%%%%

\section{Proof of Theorem \ref{t.main A}}\label{s.thmA}

In this section, we study the topology of the bifurcation set
in general and for transitive systems in particular.
We will obtain Theorem \ref{t.main A} as a combination of several smaller
propositions and lemmas proven in this part.

\subsection{General properties of the bifurcation set}\label{sec:general prop bif set}

This section aims at a first understanding of basic topological properties of the
bifurcation set.

For the sake of completeness, let us start by
briefly recalling some standard notions from the theory of dynamical systems.
For $f:\I\to\I$ and $x\in \I$ we refer to $\Ocal(x):=\{f^n(x)\: n\in
\N_0\}$ as
the \emph{orbit} of $x$.
If $\Ocal(x)$ is finite, we call $x$ and likewise its orbit
\emph{preperiodic}.
If $f^n(x)=x$ for some $n\in \N$, then $x$ as well as its orbit are
referred to
as \emph{periodic} and we call $n$ a \emph{period} of $x$.
If $\overline{\Ocal(x)}=\I$, that is, if $\mc O(x)$ is dense in $\I$, we say $x$ is
\emph{transitive}.
We denote the collection of all periodic and transitive points of $f$ by
$\Per(f)$ and $\Tra(f)$, respectively.
If $\Tra(f)\neq\emptyset$, then we call $f$ \emph{transitive} and if
$\Tra(f)=\I$, we say $f$ is \emph{minimal}.
It is well known and easy to see that $\Tra(f)$ is dense in $\I$ (residual, in fact)
if $f$ is transitive.
Finally, we call a subset $A\ssq \I$ \emph{$f$-invariant} if $A$ is closed and if
$f(A)\ssq A$.
In case $A\ssq \I$ is $f$-invariant and if there is an $x\in A$ with $\overline{\mc O(x)}=A$,
we say $A$ is  a \emph{transitive} set.

Observe that the next statement yields point \eqref{thmA: 6} of Theorem~\ref{t.main A}.
In the following, we denote by $d$ the standard metric on $\I$ and by $d_\infty$
the \emph{supremum metric} on the space of continuous self-maps on $\I$.

\begin{prop}\label{prop: Bf is lower semi-continuous}
 Suppose $(f_n)_{n\in\N}$ is a sequence of continuous maps $f_n\:\I\to \I$ which
 converges uniformly to $f\:\I\to\I$.
 Then $\bigcap_{n\in \N}\overline{ \bigcup_{k\geq n} \Bcal_{f_k}}\ssq \Bcal_f$.
\end{prop}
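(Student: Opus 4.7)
The plan is to unravel what it means for $(a,b)$ to lie in $\bigcap_{n\in\N}\overline{\bigcup_{k\geq n}\Bcal_{f_k}}$ and then use uniform convergence of iterates to pass to the limit inside the definition of $\Bcal_f$. More precisely, membership in this intersection gives, for every $n$, a point of some $\Bcal_{f_k}$ with $k\geq n$ within distance $1/n$ of $(a,b)$. A diagonal relabelling produces indices $n_k\to\infty$ and points $(a_k,b_k)\in\Bcal_{f_{n_k}}$ with $(a_k,b_k)\to(a,b)$; note that $(a,b)\in\Delta$ since the closure in the statement is taken inside $\Delta$.

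By definition of $\Bcal_{f_{n_k}}$, either $a_k$ or $b_k$ belongs to $\Scal_{f_{n_k}}(a_k,b_k)$. After passing to a subsequence I may assume the same endpoint works throughout, say $a_k\in\Scal_{f_{n_k}}(a_k,b_k)$, so that
\[
 f_{n_k}^{m}(a_k)\notin (a_k,b_k)\quad\text{for every }m\in\N_0\text{ and every }k.
\]
The goal then is to send $k\to\infty$ with $m$ fixed and obtain $f^m(a)\notin(a,b)$, which by arbitrariness of $m$ gives $a\in \Scal_f(a,b)$ and hence $(a,b)\in\Bcal_f$.

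The key ingredient is that uniform convergence $f_n\to f$ together with continuity of $f$ forces $f_n^m\to f^m$ uniformly on $\I$ for every fixed $m$; this is a routine induction using uniform continuity of $f$ on the compact space $\I$. Combined with $a_k\to a$, it yields $f_{n_k}^m(a_k)\to f^m(a)$. Since the open intervals $(a_k,b_k)$ (understood as positively oriented arcs in the circle case) converge to $(a,b)$ and their closed complements converge in the Hausdorff metric to the closed complement of $(a,b)$, a point that lies in each closed complement of $(a_k,b_k)$ has its limit in the closed complement of $(a,b)$; in particular $f^m(a)\notin(a,b)$, as $(a,b)$ is open.

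The argument is essentially routine; there is no serious obstacle, only a small care point. In the interval case one simply observes that $f_{n_k}^m(a_k)\le a_k$ or $f_{n_k}^m(a_k)\ge b_k$ passes to the limit as $f^m(a)\le a$ or $f^m(a)\ge b$. The circle case is the only place where one has to be slightly attentive, namely to the orientation and to the fact that the pair $(a,b)$ is genuinely in $\Delta$ (i.e.\ $a\ne b$), which is ensured by the choice of the ambient space for the closure. Everything else is a direct consequence of uniform convergence together with the fact that surviving sets are defined by finitely many closed conditions $f^m(\cdot)\notin(a,b)$ intersected over $m\in\N_0$.
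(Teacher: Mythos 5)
Your proof is correct and rests on the same two ingredients as the paper's — uniform convergence of the fixed iterates $f_n^m\to f^m$ and the openness of the hole $(a,b)$ — just organized directly (extract a convergent sequence $(a_k,b_k)\in\Bcal_{f_{n_k}}$, pass to a subsequence where the surviving endpoint is the same, and push $f_{n_k}^m(a_k)\notin(a_k,b_k)$ to the limit) rather than via the paper's contrapositive, which exhibits an explicit box $B_\delta(a)\times B_\delta(b)$ disjoint from $\bigcup_{k\geq n_0}\Bcal_{f_k}$. The two are essentially the same argument.
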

\begin{proof}
 Suppose $(a,b)\notin \Bcal_f$.
 Then there is $\eps>0$ and $n,m\in \N$ with $f^n(a),f^m(b)\in (a+3\eps,b-3\eps)$.
 Choose $n_0$ sufficiently large so that $d_\infty(f_k^n,f^n),d_\infty(f_k^m,f^m)<\eps$ for
 all $k\geq n_0$.
 By the triangle inequality and continuity of $f$, there is $\delta>0$ such that
 $f_k^n(x)\in B_{2\eps}(f^n(a))$ and $f_k^m(y)\in B_{2\eps}(f^m(b))$
 if $x\in B_\delta(a)$ and $y\in B_\delta(b)$.
  We may assume without loss of generality that $\delta<\eps$.
 We have hence shown $(B_\delta(a) \times B_\delta(b))\cap \bigcup_{k\geq n_0} \Bcal_{f_k}=\emptyset$.
 Therefore, $\Bcal_f^c \ssq \left(\bigcap_{n\in \N} \overline{\bigcup_{k\geq n} \Bcal_{f_k}}\right)^c$.
\end{proof}

In the following, we say a set $V\ssq \Delta$ \emph{accumulates} at the diagonal $\Delta_0$ if
$\inf_{(a,b)\in V}d(a,b)=0$.
\begin{prop}\label{p.segment}
Let $f$ be a continuous self-map on $\I$.
For every point $x\in\Bcal_f$ there exists an element $H_x\in \Hcal(\Bcal_f)$
with $x\in H_x$ or an element $V_x \in \Vcal(\Bcal_f)$ with $x\in V_x$ which accumulates at the diagonal $\Delta_0$.

Moreover, if $f$ is transitive and $(a,b)$ is contained in an element of $\Vcal(\Bcal_f)$ ($\Hcal(\Bcal_f)$), then
$a\in \Scal_f(a,b)$ ($b\in \Scal_f(a,b)$).
In particular, each non-trivial maximal vertical or horizontal segment in $\Bcal_f$ accumulates at $\Delta_0$.
\end{prop}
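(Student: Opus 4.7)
The plan is to exploit the monotonicity of the condition $(a,b)\in\Bcal_f$ in its two coordinates: shrinking the hole $(a,b)$ can only relax the requirement that a given point's orbit avoids it.

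For the first assertion, fix $(a,b)\in\Bcal_f$ and assume without loss of generality that $a\in\Scal_f(a,b)$, i.e.\ $f^n(a)\notin(a,b)$ for every $n\geq 0$. Then for every $b'\in(a,b]$ the inclusion $(a,b')\subseteq(a,b)$ forces $f^n(a)\notin(a,b')$, so $a\in\Scal_f(a,b')$ and $(a,b')\in\Bcal_f$. This yields the non-trivial vertical segment $\{a\}\times(a,b]\subseteq\Bcal_f$, whose closure meets $\Delta_0$ at $(a,a)$. By the definition of $V_{(a,b)}$ as the maximal vertical segment in $\Bcal_f$ through $(a,b)$, this segment is contained in $V_{(a,b)}$, which therefore accumulates at $\Delta_0$. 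The case $b\in\Scal_f(a,b)$ is symmetric and produces $H_{(a,b)}$; the circle case works verbatim once $(a,b')$ is interpreted as a positively oriented sub-arc of $(a,b)$.

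For the second assertion, let $V\in\Vcal(\Bcal_f)$ and write $V=\{a\}\times J$ for a non-degenerate interval $J$. Since $f$ is transitive, $\Tra(f)$ is dense (residual, in fact) in $\I$, and so $J$ contains a dense set of transitive points. For any such transitive $b'\in J$, the dense orbit of $b'$ must meet the non-empty open hole $(a,b')$, hence $b'\notin\Scal_f(a,b')$; since $(a,b')\in V\subseteq\Bcal_f$, the definition of $\Bcal_f$ forces $a\in\Scal_f(a,b')$. Now fix any $(a,b)\in V$ and choose a sequence of transitive points $b'_k\in J$ with $b'_k\to b$. If a subsequence satisfies $b'_k\geq b$, then $(a,b)\subseteq(a,b'_k)$, and $f^n(a)\notin(a,b'_k)$ directly gives $f^n(a)\notin(a,b)$; otherwise eventually $b'_k\nearrow b$ and $(a,b)=\bigcup_k(a,b'_k)$, so $f^n(a)\notin(a,b'_k)$ for all $k$ again yields $f^n(a)\notin(a,b)$. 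Either way $a\in\Scal_f(a,b)$, and the horizontal case is completely analogous.

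The ``in particular'' clause now follows by combining the two parts. Given $V\in\Vcal(\Bcal_f)$ and any $(a,b)\in V$, the second part gives $a\in\Scal_f(a,b)$, and the first-part construction then produces the vertical segment $\{a\}\times(a,b]\subseteq\Bcal_f$; since it is connected and shares the point $(a,b)$ with $V$, maximality forces it to lie in $V$, so $V$ stretches down to $\Delta_0$. The step I expect to be most delicate is the limit argument in the second part, specifically the case in which $b$ is an endpoint of $J$ and only one-sided approximation by transitive points is available; the key is that the condition $f^n(a)\notin(\cdot)$ is stable both under enlarging the hole from the relevant side (handling $b'_k\geq b$) and under taking countable unions of nested holes from below (handling $b'_k\nearrow b$), which together exhaust all possibilities.
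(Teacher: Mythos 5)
Your proof is correct and follows essentially the same route as the paper's: the first part is verbatim the monotonicity observation that $a\in\Scal_f(a,b)$ implies $a\in\Scal_f(a,b')$ for all $b'\in(a,b]$, and the second part rests, as in the paper, on density of transitive points (which never survive any hole) forcing $a\in\Scal_f(a,b')$ along the segment. The only cosmetic difference is that the paper runs the second part as a contradiction by locating a single transitive $c\in\pi_2(V)$ with $(a,c)\notin\Bcal_f$, whereas you establish $a\in\Scal_f(a,b')$ on a dense set of $b'$ and pass to a limit; both hinge on the same two ingredients and neither gives more generality.
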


\begin{proof}
For the first part, suppose $x=(a,b)\in \Bcal_f$ and assume without loss of generality that
$a\in \Scal_f(a,b)$.
Clearly, $a\in \Scal_f(a,b')$ for every $b'\in (a,b]$ which proves that there is a vertical segment in $\Bcal_f$
which accumulates at $\Delta_0$ and contains $x$.

For the second part, we may assume without loss of generality to be given an element $V\in \Vcal(\Bcal_f)$.
Denote by $\pi_2\:\Delta \to \I$ the canonical projection to the second coordinate.
Given $(a,b)\in V$, let us assume for a contradiction that $a\notin \Scal_f(a,b)$.
Then there is $n\in \N$ such that $f^n(a)\in (a,b)$.
Now, there clearly is a transitive point $c\in \pi_2(V)$ with $c\in (f^n(a),b)$ or
$b\in (f^n(a),c)$ and which --as its orbit is dense and thus hits
$(a,c)$-- is not in $\Scal_f(a,c)$.
Therefore, $(a,c)\notin \Bcal_f$ contradicting the assumption that $V\ssq \Bcal_f$.
This proves the statement.
\end{proof}

\begin{rem}\label{rem: nice points}
 Observe that the previous statement implies that if $f$ is transitive, we have that $a,b\in \Scal_f(a,b)$ if and
only if $(a,b)\in \mc D(\Bcal_f)$.
\end{rem}

\begin{cor}\label{c.structureBif}
Let $f$ be a continuous transitive self-map on $\I$.
Then $\bigcup_{V\in \Vcal(\Bcal_f)} V$ and $\bigcup_{H\in \Hcal(\Bcal_f)} H$
(and therefore $\Dcal(\Bcal_f)$) are closed.
\end{cor}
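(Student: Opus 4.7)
The plan is to give an explicit set-theoretic description of $\bigcup_{V\in \Vcal(\Bcal_f)} V$ and $\bigcup_{H\in \Hcal(\Bcal_f)} H$ via the characterization of segments in Proposition~\ref{p.segment}, and then to verify closedness of the descriptions directly.

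First, I will establish the identity
\[
    \bigcup_{V\in \Vcal(\Bcal_f)} V \;=\; \{(a,b)\in\Delta : a\in \Scal_f(a,b)\},
\]
together with the symmetric identity $\bigcup_{H\in \Hcal(\Bcal_f)} H = \{(a,b)\in\Delta : b\in\Scal_f(a,b)\}$. The inclusion $\subseteq$ is precisely the second part of Proposition~\ref{p.segment}. For the reverse inclusion, I will note that $\Scal_f(a,b)\subseteq \Scal_f(a,b')$ whenever $b'\in(a,b]$ (shrinking the hole can only enlarge the surviving set), so $a\in \Scal_f(a,b)$ places the non-trivial vertical piece $\{a\}\times(a,b]$ inside $\Bcal_f$, which is in turn contained in the unique maximal element of $\Vcal(\Bcal_f)$ through $(a,b)$.

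Next I will verify that the right-hand side is closed by a sequential argument. Suppose $(a_n,b_n)\to (a,b)\in\Delta$ with $a_n\in \Scal_f(a_n,b_n)$ for every $n$. Fix $k\in\N_0$; then $f^k(a_n)$ lies in the closed complement of the hole $(a_n,b_n)$. In the interval case this means $f^k(a_n)\leq a_n$ or $f^k(a_n)\geq b_n$, and after passing to a subsequence (depending on $k$) one of the two alternatives holds for all large $n$; continuity of $f^k$ together with $a_n\to a$ and $b_n\to b$ then yields $f^k(a)\leq a$ or $f^k(a)\geq b$, so $f^k(a)\notin(a,b)$. The circle case is identical after replacing the two inequalities by ``$f^k(a_n)$ lies in the closed arc complementary to $(a_n,b_n)$''. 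Since $k$ was arbitrary, $a\in\Scal_f(a,b)$; the horizontal union is handled symmetrically.

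Finally, directly from the definition of $\Dcal(X)$ one has $\Dcal(\Bcal_f) = \bigl(\bigcup_{H\in\Hcal(\Bcal_f)} H\bigr)\cap \bigl(\bigcup_{V\in\Vcal(\Bcal_f)} V\bigr)$, hence it is closed as an intersection of closed sets. The only step requiring mild care is the passage to the limit: the strict separation $a<b$ (respectively $a\neq b$ on $\T$) is what keeps the two alternatives genuinely disjoint at the limit and prevents $f^k(a)$ from landing inside the open hole $(a,b)$. Every other ingredient reduces to Proposition~\ref{p.closed} and Proposition~\ref{p.segment}.
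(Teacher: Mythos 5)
Your proof is correct, and it takes a genuinely different route from the paper's. The paper argues geometrically: given a sequence $(a_n,b_n)$ in $\bigcup_{V\in\Vcal(\Bcal_f)}V$ converging to $(a,b)$, it uses the fact (from Proposition~\ref{p.segment}) that each $(a_n,b_n)$ lies in a vertical segment accumulating at $\Delta_0$, so each such segment contains the whole piece $\{a_n\}\times(a_n,b_n]$; one can then build, for every $b'\in(a,b]$, a sequence $(a_n,b_n')\to(a,b')$ inside $\Bcal_f$, and closedness of $\Bcal_f$ (Proposition~\ref{p.closed}) gives $\{a\}\times(a,b]\ssq\Bcal_f$, hence $(a,b)\in\bigcup_{V}V$. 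Your version instead makes the identity $\bigcup_{V\in\Vcal(\Bcal_f)}V=\{(a,b)\in\Delta: a\in\Scal_f(a,b)\}$ explicit (the nontrivial inclusion being exactly the second part of Proposition~\ref{p.segment}), and then checks closedness of the right-hand side directly from the definition of $\Scal_f$ via a subsequence argument for each fixed iterate $k$. The paper's route stays at the level of segments and leans on Proposition~\ref{p.closed}; yours recovers an explicit dynamical description of the vertical (resp.\ horizontal) union, which is a small but reusable dividend---it makes it immediate, for instance, that $\Dcal(\Bcal_f)=\{(a,b):a,b\in\Scal_f(a,b)\}$, which the paper only records in passing in Remark~\ref{rem: nice points}. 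Both arguments use transitivity in exactly one place, namely the second half of Proposition~\ref{p.segment}, so neither is more general.
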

\begin{proof}
 Let $(a_n,b_n)_{n\in\N}$ be a sequence of points in $\bigcup_{V\in \Vcal(\Bcal_f)} V$
 (the case of $\bigcup_{H\in \Hcal(\Bcal_f)} H$ works
 similarly) converging to some $(a,b)\in \Delta$.
 By Proposition~\ref{p.segment}, we know $(a_n,b_n)$ is contained in a vertical segment which accumulates at $\Delta_0$.
 Hence, for each $b'\in (a,b]$ we have a sequence $(a_n,b_n')_{n\in\N}$ in
 $\bigcup_{V\in \Vcal(\Bcal_f)} V$ with $(a_n,b_n')\to (a,b')$ as $n\to\infty$.
 Since $\Bcal_f$ is closed (by Proposition~\ref{p.closed}), we get $\{(a,b')\: b'\in (a,b]\}\ssq \Bcal_f$, i.e.,
 $(a,b)\in \bigcup_{V\in \Vcal(\Bcal_f)} V$ which
 finishes the proof.
\end{proof}

\begin{prop}\label{prop: transitivity implies empty interior of bif-set}
 If $f\:\I\to\I$ is transitive, then $\Int(\Bcal_f)=\emptyset$.
\end{prop}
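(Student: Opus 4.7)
My plan is to argue by contradiction, exploiting the density of transitive points together with the defining condition of $\Bcal_f$, namely that at least one endpoint of the hole survives forever. The observation that drives everything is simple: a transitive point cannot be an endpoint of any hole it survives, because its orbit, being dense in $\I$, must eventually enter the open interval $(a,b)$.

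More precisely, I would begin by assuming $\Int(\Bcal_f) \neq \emptyset$. Since the topology on $\Delta$ is the subspace topology inherited from $\I \times \I$, any non-empty open subset of $\Delta$ contains a product box $(a_1,a_2) \times (b_1,b_2) \subseteq \Bcal_f$ with $a_2 \leq b_1$ (or, in the circle case, with the two intervals disjoint). Invoking the fact recalled just before the proposition that $\Tra(f)$ is residual in $\I$ whenever $f$ is transitive, the set $\Tra(f) \times \Tra(f)$ is residual in $\I \times \I$, hence meets every non-empty open subset of $\Delta$. Therefore I can pick $(a,b) \in (a_1,a_2) \times (b_1,b_2)$ with both $a, b \in \Tra(f)$.

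Now $(a,b)$ is a non-empty open interval (in the circle case, a proper arc), so by density of the orbit of $a$ there exists $n \geq 1$ with $f^n(a) \in (a,b)$; this shows $a \notin \Scal_f(a,b)$. The same argument applied to $b$ yields $b \notin \Scal_f(a,b)$. Consulting the definition of $\Bcal_f$, we conclude $(a,b) \notin \Bcal_f$, contradicting $(a,b) \in (a_1,a_2)\times(b_1,b_2) \subseteq \Bcal_f$.

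I don't foresee any real obstacle; the argument is essentially one line once the density of $\Tra(f)$ is available. The only minor care needed is to make sure the chosen $(a,b)$ genuinely lies in $\Delta$ (i.e.\ $a \neq b$ and, when $\I=[0,1]$, avoids the boundary), but this is automatic since the box was already a subset of $\Delta$.
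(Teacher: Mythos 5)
Your proof is correct and takes essentially the same approach as the paper: both rely on the density of transitive points $\Tra(f)$ to find a point arbitrarily close to (or inside) any candidate interior region with both coordinates transitive, then observe that such a point cannot survive its own hole and hence lies outside $\Bcal_f$. The only cosmetic difference is that you phrase it as a contradiction from a full open box while the paper directly perturbs a given $(a,b)\in\Bcal_f$.
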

\begin{proof}
 Given $(a,b)\in \Bcal_f$, we find arbitrarily close $(a',b')$ such that
 $a'$ and $b'$ are transitive points and hence $a',b'\notin \Scal_f(a',b')$.
\end{proof}

Note that transitivity is not necessary in order to have $\Int(\Bcal_f)=\emptyset$.
For example, on $\I=[0,1]$, we may consider
\begin{align*}
 f(x):=
 \begin{cases}
  1/2-3\cdot|x-1/6| & \text{ for } 0\leq x \leq 1/3,\\
  3\cdot(x-1/3) & \text{ for } 1/3\leq x \leq 2/3,\\
  1/2+3\cdot|x-5/6| & \text{ for } 2/3\leq x \leq 1.
 \end{cases}
\end{align*}
Here, $[0,1/2]$ and $[1/2,1]$ are transitive $f$-invariant subsets and we see, similarly as in the proof of
Proposition~\ref{prop: transitivity implies empty interior of bif-set}, that $\Int(\Bcal_f)=\emptyset$.

Recall that the set of \emph{non-wandering points} of $f$ is defined by
\[NW(f):=\{x\in \I\: \forall \eps>0 \, \exists n\in\N \text{ such that } f^n(B_\eps(x))\cap B_\eps(x) \neq \emptyset\}.\]
We straightforwardly obtain the following converse of Proposition~\ref{prop: transitivity implies empty interior of bif-set}.

\begin{prop}
 Let $f$ be a continuous self-map on $\I$.
 If $\Int(\Bcal_f)=\emptyset$, then $NW(f)=\I$.
\end{prop}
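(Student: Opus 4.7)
I would prove the contrapositive: if $NW(f) \neq \I$, then $\Int(\Bcal_f) \neq \emptyset$.

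First, note that the set $W(f) := \I \setminus NW(f)$ of wandering points is open by definition (if $x \in W(f)$, then the $\eps$ witnessing the wandering property works for all points in a small neighborhood of $x$). Since $\partial \I$ consists of at most two points while $W(f)$ is open and non-empty by assumption, I can fix a wandering point $x \in W(f) \setminus \partial\I$ together with $\eps > 0$ such that $B_\eps(x) \subseteq \I \setminus \partial\I$ and
\[
    f^n(B_\eps(x)) \cap B_\eps(x) = \emptyset \quad \text{for all } n \geq 1.
\]

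Now I claim that every pair $(a,b) \in \Delta$ with $a, b \in B_\eps(x)$ and $(a,b) \subseteq B_\eps(x)$ (where on $\T$ the interval is taken with positive orientation) belongs to $\Bcal_f$. Indeed, for such $(a,b)$, the choice of $\eps$ implies $f^n(a) \notin B_\eps(x) \supseteq (a,b)$ for every $n \geq 1$, and of course $f^0(a) = a \notin (a,b)$. Hence $a \in \Scal_f(a,b)$, which by definition gives $(a,b) \in \Bcal_f$.

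It remains to observe that the set of such pairs is a non-empty open subset of $\Delta$. In the interval case, the collection $\{(a,b) : x - \eps < a < b < x + \eps\}$ is open in $\Delta$ and contained in $B_\eps(x) \times B_\eps(x)$, so it works; in the circle case one takes a small arc around $x$ inside $B_\eps(x)$ and considers positively oriented pairs inside this arc. Either way, $\Bcal_f$ contains an open set, contradicting $\Int(\Bcal_f) = \emptyset$. There is essentially no obstacle here: the only care needed is to ensure the wandering point can be chosen away from $\partial \I$ (to guarantee the candidate holes actually lie in $\Delta$) and that the positive-orientation convention on $\T$ is respected, both of which follow from openness of $W(f)$ and the freedom to shrink $\eps$.
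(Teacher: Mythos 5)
Your proof is correct and is the natural argument the paper has in mind (the paper omits the proof, calling it a straightforward converse). The contrapositive is exactly the right move: a wandering point $x$ away from $\partial\I$ yields, for any sufficiently small hole $(a,b)$ sitting inside $B_\eps(x)$, that $a$ survives since $f^n(a)$ never returns to $B_\eps(x) \supseteq (a,b)$ for $n\geq 1$, and this produces an open subset of $\Delta$ inside $\Bcal_f$.
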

Clearly, $NW(f)=\I$ is not sufficient in order to have $\Int(\Bcal_f)=\emptyset$
as can be seen by considering the identity, for example.

\subsection{Transitive case}
The statements of the previous section suggest that the additional assumption of transitivity
allows for a substantially more detailed description of the bifurcation set.
With this observation in mind, we are now taking a closer look at the transitive case.

\begin{lemma}
 If $f\:\I\to\I$ is continuous and transitive, then $\Dcal(\Bcal_f)$ is totally disconnected.
\end{lemma}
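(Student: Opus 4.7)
The plan is to proceed by contradiction: I would assume that $\Dcal(\Bcal_f)$ contains a connected component $C$ with $|C|\geq 2$ and derive a contradiction by exhibiting a transitive point whose orbit is forced to avoid a non-empty hole.

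First, I would note that since $C$ is a connected subset of $\Delta\ssq\I\times\I$ and the coordinate projections $\pi_1,\pi_2\:\Delta\to\I$ are continuous, both $\pi_1(C)$ and $\pi_2(C)$ are connected subsets of $\I$, i.e.\ sub-intervals (in the case $\I=[0,1]$) or sub-arcs (in the case $\I=\T$). Because $C$ contains at least two distinct points, at least one of these projections --say $\pi_1(C)$, the other case being completely symmetric-- is non-degenerate and therefore contains a non-empty open sub-interval (resp.\ sub-arc) of $\I$.

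Next, I would invoke the standard fact (recalled just before Proposition~\ref{prop: Bf is lower semi-continuous}) that $\Tra(f)$ is dense (in fact residual) in $\I$ for transitive $f$, in order to pick a transitive point $t$ lying in the interior of $\pi_1(C)$. By definition of $\pi_1(C)$, there exists $b\in\I$ with $(t,b)\in C\ssq\Dcal(\Bcal_f)$, so in particular $(t,b)$ lies on a non-trivial maximal vertical segment of $\Bcal_f$. Applying Proposition~\ref{p.segment} to this vertical segment then yields $t\in\Scal_f(t,b)$, i.e.\ $f^n(t)\notin(t,b)$ for every $n\in\N_0$.

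To close the argument, I would observe that the hole $(t,b)$ is a non-empty open subset of $\I$ in both settings (an open interval in $[0,1]$ when $t<b$, or an open positively-oriented arc in $\T$ when $t\neq b$). Since $t$ is transitive, $\Ocal(t)$ is dense in $\I$ and thus must intersect this hole, contradicting $t\in\Scal_f(t,b)$. The only genuine subtlety I anticipate is ensuring that the transitive point $t$ lies in the \emph{interior} of $\pi_1(C)$ rather than merely on its boundary; however this is immediate once one observes that a non-degenerate connected subset of $\I$ automatically has non-empty interior, into which the residual set $\Tra(f)$ must reach.
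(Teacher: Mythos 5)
Your proof is correct and takes a genuinely different—and substantially more elementary—route than the paper's. The paper reduces total disconnectedness to zero-dimensionality (using local compactness of $\Dcal(\Bcal_f)$ from Corollary~\ref{c.structureBif}), then assumes a point without small clopen neighborhoods, extracts from nested rectangle boundaries a family of horizontal/vertical segments, and applies Baire's category theorem to force $\Int(\Bcal_f)\neq\emptyset$, contradicting Proposition~\ref{prop: transitivity implies empty interior of bif-set}. Your argument instead shows directly that every connected component of $\Dcal(\Bcal_f)$ is a singleton: a non-degenerate component would have a projection with non-empty interior, which must contain a transitive point $t$; any double point $(t,b)$ then satisfies $t\in\Scal_f(t,b)$ by the second part of Proposition~\ref{p.segment}, yet density of $\Ocal(t)$ forces $\Ocal(t)\cap(t,b)\neq\emptyset$—a contradiction. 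This avoids both Baire and the closedness/local compactness of $\Dcal(\Bcal_f)$, and also yields a sharper byproduct: no coordinate of a double point can be a transitive point (equivalently, $\pi_1(\Dcal(\Bcal_f))$ and $\pi_2(\Dcal(\Bcal_f))$ are disjoint from $\Tra(f)$), which is stronger than mere empty interior of these projections. The main thing the paper's more elaborate route buys is that it runs through $\Int(\Bcal_f)=\emptyset$, which is itself one of the theorem's stated conclusions; your argument is cleaner but does not reprove that point along the way.
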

\begin{proof}
 Observe that since $\Dcal(\Bcal_f)$ is locally compact (see Corollary~\ref{c.structureBif}),
 $\Dcal(\Bcal_f)$ is totally disconnected if and only if it is zero dimensional.
 For a contradiction, we assume that $\Dcal(\Bcal_f)$ is not zero dimensional
 so that there is $(a,b)\in \Dcal(\Bcal_f)$ such that $(a,b)$ does not have arbitrarily
 small clopen neighborhoods in $\Dcal(\Bcal_f)$.
 Then there is $\eps_0>0$ such that for all $\eps\in[0,\eps_0]$ we have that the boundary of the rectangle
 $[a-\eps,a+\eps]\times [b-\eps,b+\eps]$ intersects $\Dcal(\Bcal_f)$ (note that we may assume without loss of generality
 that $\eps_0<1/2\cdot d(a,b)$).
 Observe that if $\Dcal(\Bcal_f)$ intersects one of the vertical sides of this boundary, this gives
 $v_\eps\ssq \Bcal_f$ or $v^\eps \ssq \Bcal_f$, where $v_\eps$ and $v^\eps$ are the vertical line segments
 $v_\eps = \{a-\eps\}\times (a-\eps,b-\eps_0]$ and $v^\eps = \{a+\eps\}\times (a+\eps,b-\eps_0]$, respectively.
 Likewise, if $\Dcal(\Bcal_f)$ intersects one of the horizontal sides, this implies
 $h_\eps\ssq \Bcal_f$ or $h^\eps \ssq \Bcal_f$, where
 $h_\eps = [a+\eps_0,b-\eps)\times \{b-\eps\}$ and $h^\eps = [a+\eps_0,b+\eps)\times \{b+\eps\}$.
 Hence,
 \[
	[0,\eps_0]=\bigcup_{\substack{\eps \in [0,\eps_0]\\v_\eps\ssq \Bcal_f}} \eps \cup
			\bigcup_{\substack{\eps \in [0,\eps_0]\\v^\eps\ssq \Bcal_f}}\eps\cup
			\bigcup_{\substack{\eps \in [0,\eps_0]\\h_\eps\ssq \Bcal_f}}\eps \cup
			\bigcup_{\substack{\eps \in [0,\eps_0]\\h^\eps\ssq \Bcal_f}}\eps.
 \]
 According to Corollary~\ref{c.structureBif}, the sets
 \[
	\bigcup_{\substack{\eps \in [0,\eps_0]\\v_\eps\ssq \Bcal_f}} \eps,
	\bigcup_{\substack{\eps \in [0,\eps_0]\\v^\eps\ssq \Bcal_f}} \eps,\ldots
 \]
 are closed.
 Hence by Baire's category theorem, we may assume without loss of generality that there is a non-degenerate interval
 $I\ssq\bigcup_{\substack{\eps \in [0,\eps_0]\\v_\eps\ssq \Bcal_f}} \eps$.
 But then $(a-I)\times (a,b-\eps_0]\ssq \Bcal_f$ so that $\textrm{int}(\Bcal_f)\neq \emptyset$,
 contradicting Proposition~\ref{prop: transitivity implies empty interior of bif-set}.
 \end{proof}
 Together with Corollary~\ref{c.structureBif}, the previous statement proves point (\ref{thmA: 3}) of Theorem~\ref{t.main A}.
 We next consider point (\ref{thmA: 4}).

\begin{prop}\label{p.endpointbifpoint}
Suppose $f\:\I\to\I$ is continuous and transitive.
If $(a,b)$ is an endpoint of an element of $\Hcal(\Bcal_f)$,
then $(a,b)\in\Dcal(\Bcal_f)$ and the orbit of $b$ comes arbitrarily close to $a$.
Likewise, if $(a,b)$ is an endpoint of an element of $\Vcal(\Bcal_f)$,
then $(a,b)\in\Dcal(\Bcal_f)$ and the orbit of $a$ comes arbitrarily close to $b$.
\end{prop}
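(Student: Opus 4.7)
The plan is to treat the horizontal case; the vertical case is entirely symmetric by swapping the roles of the two coordinates. So suppose $(a,b)$ is an endpoint of some $H\in\Hcal(\Bcal_f)$. By Proposition~\ref{p.segment}, we have $b\in\Scal_f(a,b)$ (i.e.\ the forward orbit of $b$ avoids $(a,b)$) and $H$ accumulates at $\Delta_0$. Together with the ``shrink-the-hole'' observation from the proof of Proposition~\ref{p.segment} --- namely that $b\in\Scal_f(a',b)$ whenever $(a',b)$ lies on the appropriate side of $(a,b)$ between $a$ and $b$ --- this identifies $(a,b)$ as the endpoint of $H$ opposite to the diagonal.

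First I would show that the orbit of $b$ comes arbitrarily close to $a$. By the maximality of $H$ and closedness of $\Bcal_f$ (Proposition~\ref{p.closed}), there is a sequence $(a_n,b)\notin\Bcal_f$ with $a_n\to a$ from the side of $a$ opposite to the part of $H$ adjacent to $(a,b)$; in the case $\I=[0,1]$ this simply means $a_n\nearrow a$. Since $(a_n,b)\notin\Bcal_f$, in particular $b\notin\Scal_f(a_n,b)$, so the orbit of $b$ enters $(a_n,b)$. Combined with $b\in\Scal_f(a,b)$, this orbit must in fact enter the small set $(a_n,b)\setminus(a,b)$, which lies within distance $|a_n-a|\to 0$ of $a$; hence the orbit of $b$ accumulates at $a$.

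Next I would establish $a\in\Scal_f(a,b)$; this suffices, because the argument used in Proposition~\ref{p.segment} then gives $a\in\Scal_f(a,b')$ for every $b'\in(a,b]$, hence $\{a\}\times(a,b]\ssq\Bcal_f$; taking the maximal extension yields $V\in\Vcal(\Bcal_f)$ containing $(a,b)$, witnessing $(a,b)\in\Dcal(\Bcal_f)$. To prove $a\in\Scal_f(a,b)$, assume for contradiction that $f^N(a)\in(a,b)$ for some $N\in\N$. By continuity of $f^N$ there is an open neighborhood $U$ of $a$ with $f^N(U)\ssq(a,b)$; by the previous step some iterate $f^n(b)$ lies in $U$, so $f^{n+N}(b)\in(a,b)$, contradicting $b\in\Scal_f(a,b)$.

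The vertical case follows by exchanging the roles of the two coordinates throughout: starting from $a\in\Scal_f(a,b)$, one perturbs the second coordinate on the side of $(a,b)$ away from $\Delta_0$, forces the orbit of $a$ to accumulate at $b$, and then rules out $f^N(b)\in(a,b)$ by the same continuity argument. The only subtlety I anticipate is the topological bookkeeping required to locate the perturbations $(a_n,b)\notin\Bcal_f$ on the correct side of $(a,b)$; this relies on combining the accumulation of $H$ at $\Delta_0$ with its maximality, but once this is set up the rest is a direct continuity argument.
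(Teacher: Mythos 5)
Your proof is correct and follows essentially the same route as the paper: both use maximality of $H$ together with closedness of $\Bcal_f$ to produce $(a_n,b)\notin\Bcal_f$ with $a_n\to a$, conclude that the orbit of $b$ must enter $(a_n,b)\setminus(a,b)$ and hence accumulates at $a$, and then use continuity of $f^N$ to rule out $f^N(a)\in(a,b)$, yielding $a\in\Scal_f(a,b)$ and hence $(a,b)\in\Dcal(\Bcal_f)$.
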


\begin{proof}
We only consider $(a,b)\in \Hcal(\Bcal_f)$, the other case is similar.
By the second part of Proposition~\ref{p.segment},
we have to show that $a\in \Scal_f(a,b)$.
Since $(a,b)$ is an endpoint of a maximal horizontal segment,
the set $\{x\in \I\setminus[a,b]\colon (x,b)\in\Bcal_f^c\}$ accumulates at $a$.
By definition, for all $x$ in the previous set,
there are positive integers $n_x$ and $m_x$, so that $f^{n_x}(b)\in (x,b)$ and $f^{m_x}(x)\in(x,b)$.
As $b\in \Scal_f(a,b)$ (see Proposition~\ref{p.segment}), this gives $f^{n_x}(b)\in (x,a]$ and hence
$\inf_{n\in\N} d(f^n(b),a)=0$.
Thus, if there was $n\in \N$ with $f^n(a)\in(a,b)$ we would have that $f^m(b)\in(a,b)$ for some $m\in \N$ contradicting
the fact that $b\in \Scal_f(a,b)$.
Therefore, $a\in\Scal_f(a,b)$.
\end{proof}

Recall the definition of steps, links and stairs from the introduction.
Given a transitive self-map on $\I$, it is easy to see that if $\{ x_1,x_2, \ldots, x_p\}$ is a periodic orbit,
then each pair of adjacent points $(x_{i_0},x_{i_1})$ (where the interval $(x_{i_0},x_{i_1})$ does not intersect
the respective orbit) with $x_{i_0},x_{i_1}\notin \partial \I$
is a step and all elements in $\{ x_1,x_2, \ldots, x_p\}\setminus \partial \I$ are links.
In this way, each periodic orbit with at least two elements not contained in $\partial \I$ is naturally
associated to a stair in $\Bcal_f$.
In fact, we have the following

\begin{prop}\label{prop: stairs and periodic orbits}
Given $f:\I\to\I$ is continuous and transitive, every stair of $\Bcal_f$ is of
finite length and realized by a unique periodic orbit.
\end{prop}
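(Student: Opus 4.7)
My plan has three stages: (i) extract a common orbit closure $K$ for all links of the stair, (ii) argue that $K$ is countable and then finite via Baire together with recurrence, and (iii) recover the stair from the resulting periodic orbit. Write $F_x = \{(a_1, a_2), (a_2, a_3), \ldots\}$ with link set $L = \{a_i\}$. For each step $(a_i, a_{i+1})$, Proposition~\ref{p.segment} gives $a_i, a_{i+1} \in \Scal_f(a_i, a_{i+1})$ (so the orbits $\Ocal(a_i)$ and $\Ocal(a_{i+1})$ avoid the hole), while Proposition~\ref{p.endpointbifpoint}, applied to the endpoint $(a_i, a_{i+1})$ of both $H_{(a_i, a_{i+1})}$ and $V_{(a_i, a_{i+1})}$, yields that $\Ocal(a_i)$ accumulates at $a_{i+1}$ and $\Ocal(a_{i+1})$ at $a_i$. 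Continuity of $f$ promotes these accumulations to orbit-closure inclusions: from $f^{n_k}(a_i) \to a_{i+1}$ one obtains $f^{n_k + j}(a_i) \to f^j(a_{i+1})$ for every $j \geq 0$, so $\Ocal(a_{i+1}) \subseteq \omega(a_i) \subseteq \overline{\Ocal(a_i)}$ (and symmetrically). Iterating along the stair, $K := \overline{\Ocal(a_i)}$ is independent of $i$ and contains $L$; since $a_i \in \omega(a_i) = K$, each link is recurrent and $\Ocal(a_i) \subseteq K$.

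For (ii), because $\Ocal(a_j)$ avoids $(a_j, a_{j+1})$, the closure $K$ is disjoint from every stair gap. On the circle, the gaps union to $\T \setminus L$, giving $K \subseteq L$. On the interval, a careful use of the maximality of $F_x$ shows that the maximal horizontal at height $a_1$ and the maximal vertical at first coordinate $a_p$ associated with the terminal links must extend to $\partial \I$; Proposition~\ref{p.segment} then forces $\Ocal(a_1) \cap (0, a_1) = \emptyset$ and $\Ocal(a_p) \cap (a_p, 1) = \emptyset$, so $K \subseteq L \cup \{0, 1\}$. Either way, $K$ is countable, and Baire's theorem supplies an isolated point $y \in K$. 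Density of $\Ocal(a_i)$ in $K$ forces any orbit subsequence converging to $y$ to be eventually equal to $y$, so $\Ocal(a_i)$ meets $y$ infinitely often and $y$ is periodic by pigeonhole. Consequently $\omega(a_i) = \Ocal(y)$ is finite, and recurrence $a_i \in \omega(a_i)$ places $a_i$ on this orbit, so $K = \Ocal(a_i)$ is a finite periodic orbit.

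For (iii), any pair $(x, y)$ of points of $\Ocal(a_i)$ adjacent in the natural (cyclic, in the circle case) order of $\I$ satisfies $x, y \in \Scal_f(x, y)$: the horizontal at height $y$ and the vertical at first coordinate $x$ both terminate at $(x, y)$ (beyond them, the orbit of $y$ re-enters through the next adjacent orbit point), so $(x, y)$ is a corner point, and the analogous analysis of the neighbouring adjacent pairs provides the extra links, making $(x, y)$ a step. These steps assemble into a stair of length $|\Ocal(a_i)|$ containing $(a_i, a_{i+1})$, and by uniqueness of the maximal stair through a given step this stair coincides with $F_x$. Hence $L = \Ocal(a_i)$, the stair length equals the period of $a_i$, and the realizing periodic orbit is uniquely determined by $F_x$. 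The main obstacle in this plan is the interval-case argument in (ii): one must rule out that the maximal horizontal at height $a_1$ terminates at an interior double point of $\Delta$ which is not a step, since otherwise $K$ might extend uncountably into $[0, a_1]$ and the Baire-based reduction would collapse.
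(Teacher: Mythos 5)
Your stage~(i) is in essence the right starting move and is closely aligned with the paper's proof: Proposition~\ref{p.endpointbifpoint} gives the mutual accumulation of neighbouring link orbits, and from this you correctly extract $\omega(a_i)=K$ for every link, in particular the recurrence $a_i\in\omega(a_i)$. (You should, however, handle the degenerate possibility that $a_{i+1}$ is a one-shot hit of $\Ocal(a_i)$ — Proposition~\ref{p.endpointbifpoint} only gives $\inf_n d(f^n(a_i),a_{i+1})=0$, not $a_{i+1}\in\omega(a_i)$; if $a_{i+1}\in\Ocal(a_i)$ and $a_i\in\Ocal(a_{i+1})$ you are already done, and otherwise the $\omega$-argument applies, so it is a cheap case split.)

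The genuine gap is the one you flag yourself, and it is fatal to stage~(ii) in both geometries, not just on the interval. On the circle you assert that the stair gaps cover $\T\setminus L$, but this presupposes that the stair wraps around; a priori it could be a one-sided or doubly infinite sequence of steps whose links accumulate, in which case a whole arc of $\T$ is neither a link nor covered by any gap, and $K$ could intersect it. On the interval you would need the terminal horizontal at height $a_1$ (resp.~terminal vertical at abscissa $a_p$) to extend to $\partial\I$; Proposition~\ref{p.endpointbifpoint} only tells you that its endpoint $(c,a_1)$ is a \emph{double} point, not a \emph{corner} point — let alone a \emph{step} — so maximality of $F_x$ does not obviously force $c$ to be near $0$. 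In fact, the paper only concludes that terminal segments reach $\partial\I$ \emph{after} Proposition~\ref{prop: stairs and periodic orbits} (see the remark following it), so invoking this here is circular. Thus your Baire reduction has no sound base to stand on.

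What is striking is that you already possess everything needed to finish without Baire. Since $a_i$ is a \emph{link}, there is a horizontal at height $a_i$ accumulating at $\Delta_0$ (by Proposition~\ref{p.segment}), so $a_i\in\Scal_f(c,a_i)$ for some $c<a_i$; combined with $a_i\in\Scal_f(a_i,a_{i+1})$, the forward orbit of $a_i$ avoids the punctured neighbourhood $(c,a_i)\cup(a_i,a_{i+1})$. But stage~(i) gives $a_i\in\omega(a_i)$. An orbit that returns to $a_i$ in the $\omega$-sense yet never enters a punctured neighbourhood of $a_i$ must actually hit $a_i$ infinitely often, i.e.\ $a_i$ is periodic, and then $\omega(a_i)=\Ocal(a_i)$ forces each neighbouring link onto the same finite orbit. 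This \emph{local} argument — recurrence plus punctured-neighbourhood avoidance at each link — is exactly the paper's proof, and it bypasses the global set $K$, the countability question, and Baire entirely. Your stage~(iii) is then a routine verification that the adjacent-pair steps of $\Ocal(a_i)$ reassemble into $F_x$, which is fine.
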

\begin{proof}
Assume we are given a stair of length $p \in \N\cup\{\infty\}$.
By definition, each element $(x_i,x_{i+1})$ of the stair is a corner point so that $x_i,x_{i+1}\in \Scal_f(x_i,x_{i+1})$,
due to Proposition~\ref{p.endpointbifpoint}.
Further, as $x_i$ is a link, there is an element of $\Hcal(\Bcal_f)$ which accumulates at $(x_i,x_i)\in \Delta_0$, so that
Proposition~\ref{p.segment} yields that $x_i\in \Scal_f(c,x_i)$ for some $c<x_i$.
Hence, the orbit of $x_i$ does not hit the set $(c,x_i)\cup(x_i,x_{i+1})$ and can
therefore not accumulate at $x_{i}$.
Likewise, we obtain that the orbit of $x_{i+1}$ cannot accumulate at $x_{i+1}$.
However, due to Proposition~\ref{p.endpointbifpoint}, the orbit of $x_i$ comes
arbitrarily close to $x_{i+1}$ and the orbit of $x_{i+1}$ comes arbitrarily close
to $x_i$.
This clearly yields that $x_i$ is an iterate of $x_{i+1}$ and vice versa.
Hence, $x_i$ and $x_{i+1}$ are elements of a periodic orbit.
We conclude that all links associated to a stair come
from one and the same periodic orbit of period not bigger than $p+2$.
This proves the statement.
\end{proof}

\begin{cor}
Let $f\: \I\to \I$ be continuous and transitive.
Then, for all but finitely many $p\geq 2$, there is a one-to-one correspondence between periodic orbits of minimal period $p$
and stairs of length $p$.
\end{cor}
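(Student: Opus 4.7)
The plan is to exhibit an explicit inverse to the map $\phi$ sending a stair to its associated periodic orbit, which is already known to be well defined and to send length-$p$ stairs to orbits of period at most $p+2$ by Proposition~\ref{prop: stairs and periodic orbits}. Since $\phi$ is automatically injective (the links of a stair are determined by, and determine, the underlying orbit together with the natural cyclic/linear ordering on $\I$), it suffices to show that every periodic orbit of minimal period $p$ which avoids $\partial\I$ is realized by a stair of length $p$.

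First I would fix such a periodic orbit and enumerate its points consecutively along $\I$ as $x_{(1)},\ldots,x_{(p)}$, and then verify that each consecutive pair $(x_{(i)},x_{(i+1)})$ is a step of $\Bcal_f$. Since the orbit is finite and $f$-invariant, both coordinates lie in $\Scal_f(x_{(i)},x_{(i+1)})$, so the pair is a double point. The key step is pinning down the maximality of the surrounding line segments: the vertical segment through $(x_{(i)},x_{(i+1)})$ cannot extend above $x_{(i+1)}$ because then the iterate $x_{(i+1)}$ would enter the hole, forcing $x_{(i)}\notin\Scal_f(x_{(i)},b)$ for $b>x_{(i+1)}$; meanwhile, density of $\Tra(f)$ in $\I$ yields values of $b$ arbitrarily close to $x_{(i+1)}$ from above with $b\notin\Scal_f(x_{(i)},b)$ as well, ruling out $(x_{(i)},b)\in\Bcal_f$. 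An analogous argument controls the horizontal segment. Together with the fact that each $x_{(i)}$ is itself a link, via the $H$- or $V$-segment supplied by an adjacent step (or by the segment running off towards $\Delta_0$ for the extreme points in the interval case), the consecutive pairs chain together through shared links into a single stair of exactly $p$ links.

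To pass from this generic correspondence to a bijection for cofinitely many $p$, I would finally quantify the exceptional orbits. On $\T$ the boundary $\partial\I$ is empty, so the bijection holds for every $p\geq 2$. For $\I=[0,1]$ the only periodic orbits failing to avoid $\partial\I$ are the (at most two) orbits containing $0$ or $1$; each such orbit of period $q$ produces a stair of length $q-1$ or $q-2$ instead of $q$. Collecting the finitely many periods $q$ involved together with the shifted lengths $q-1,q-2$ into a finite set $E\subset\N$, Proposition~\ref{prop: stairs and periodic orbits} and the surjectivity established above yield the claimed bijection for every $p\in\N\setminus E$. The main technical obstacle is the argument in the middle paragraph, namely rigorously identifying the endpoints of the maximal vertical and horizontal segments through each $(x_{(i)},x_{(i+1)})$, since this requires interleaving the blocking role of the orbit points with the destructive action of nearby transitive points on both sides of each orbit point.
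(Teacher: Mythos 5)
Your proposal is correct and follows essentially the same route as the paper, which leverages the bijection (established just before the corollary and in Proposition~\ref{prop: stairs and periodic orbits}) between stairs and periodic orbits having at least two points in $\I\setminus\partial\I$, together with the observation that at most two orbits can meet $\partial\I$. You fill in details the paper labels ``easy to see'' (the step-verification for consecutive orbit points) and make explicit the point, left implicit in the paper's proof, that the exceptional set must include both the periods of boundary-hitting orbits and the shorter lengths of the stairs those orbits produce.
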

\begin{proof}
 By the above, there is a one-to-one correspondence between stairs and periodic orbits which contain at least two
 elements within $\I\setminus \partial \I$.
 Further, unless a given periodic orbit hits $\partial \I$, its period obviously coincides with
 the length of the associated stair.
 As there are at most two periodic orbits which hit $\partial \I$, the statement follows.
\end{proof}
\begin{rem}
 We would like to stress that in case of $\I=\T$, it is straightforward to see that the above
 one-to-one correspondence holds true for all
 periods $p\geq 2$, in fact.
\end{rem}

Slightly abusing notation, given a step $x$,
we may also refer to the point-set $S_{x}=V_{x}\cup H_{x}\ssq \Bcal_f$
as a \emph{step}.
In a similar fashion, given a stair $F_x$, we may also refer to the
union of all maximal vertical and horizontal segments
whose first and second coordinate, respectively, coincides with a link of $F_x$
as the stair $F_x$.
Notice that for $\I=[0,1]$, this union not only includes
all respective steps (considered as point-sets)
but also the horizontal and vertical segments associated to terminal links.
We may refer to these segments as \emph{terminal segments} of $F_x$.
Observe that since each stair is realized by a periodic orbit, the terminal segments accumulate at
$\{0\}\times \I$ and $\I\times \{1\}$.

By a \emph{path} in $\Bcal_f$, we refer to a continuous map $\gamma\: [0,1]\to \Delta$ with
$\gamma([0,1])\ssq \Bcal_f$.
Recall that $\Bcal_f$ is \emph{path-connected} if for all $x,y\in \Bcal_f$ there is a path $\gamma$ in $\Bcal_f$
from $x$ to $y$, that is, $\gamma(0)=x$ and $\gamma(1)=y$.
In order to prove the path-connectedness of $\Bcal_f$, we make use of the following observation
whose proof is based on the classical fact that a continuous transitive and non-minimal self-map
on $\I$ has a dense set of periodic points (for interval maps, see \cite{Sharkovsky1964} and
also \cite[Lemma 41 on p. 156]{BlockCoppel1992}; for maps on the circle, this follows from
\cite[Theorem A]{CovenMulvey1986} together with \cite[Corollary 2]{AuslanderKatznelson1979}).

\begin{prop}\label{prop: stairs are path-connected}
 Suppose $f\:\I\to\I$ is continuous and transitive.
 Given two points $(a,b)$ and $(a',b')$ on a stair $F_x$ (considered as the above union of segments),
 there is a continuous path in $\Bcal_f$ from $(a,b)$ to $(a',b')$.
\end{prop}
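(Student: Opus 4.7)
The plan is to reduce the statement to constructing a path between two consecutive L-shaped steps, and then to build such a path by routing through a step of a second periodic orbit that straddles the shared link.

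First I would note that, by the preceding proposition, $F_x$ is finite and realized by a single periodic orbit; writing its links in $\I \setminus \partial \I$ as $a_1 < a_2 < \ldots < a_p$, the non-terminal steps of $F_x$ are $y_i = (a_i, a_{i+1})$ for $i = 1, \ldots, p-1$. Each L-shape $S_{y_i} = V_{y_i} \cup H_{y_i}$ is already path-connected via its corner $y_i$, so every point of the stair can be joined within $F_x$ itself to the nearest corner (or to the free end of a terminal segment). Hence it suffices to connect consecutive corners $y_i$ and $y_{i+1}$ by a path in $\Bcal_f$, and analogously to join a terminal segment to its neighbouring step. If $f$ is minimal then $\Bcal_f$ is empty and the statement is vacuous, so we may assume $f$ is non-minimal; then $\Per(f)$ is dense in $\I$ by the classical fact invoked just before the proposition.

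The main obstacle is that $H_{y_i}$ (at height $a_{i+1}$) and $V_{y_{i+1}}$ (at column $a_{i+1}$) both accumulate at the would-be corner $(a_{i+1}, a_{i+1}) \in \Delta_0$, but this diagonal point is excluded from $\Delta$, so the two segments do not meet as point-sets in $\Bcal_f$. To bridge them, I would detour through a step of a different periodic orbit. Concretely, combining transitivity of $f$ with continuity and density of $\Per(f)$, I can produce a periodic $p$, with $\Ocal(p)$ disjoint from $\Ocal(a_1)$, whose orbit contains two orbit-consecutive points $r < s$ satisfying $r \in (a_i, a_{i+1})$ and $s \in (a_{i+1}, a_{i+2})$: pick a transitive $z \in (a_i, a_{i+1})$ and $n \in \N$ with $f^n(z) \in (a_{i+1}, a_{i+2})$, and then approximate $z$ by such a periodic $p$ using the continuity of $f^n$.

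With $r, s$ in hand, the orbit of $p$ yields a step $(r, s) \in \Bcal_f$ together with a vertical segment $\{r\} \times (r, s]$ and a horizontal segment $[r, s) \times \{s\}$ contained in $\Bcal_f$. The desired bridge is then
\[
y_i \; \longrightarrow \; (r, a_{i+1}) \; \longrightarrow \; (r, s) \; \longrightarrow \; (a_{i+1}, s) \; \longrightarrow \; y_{i+1},
\]
where the first leg runs along $H_{y_i}$, the second along $\{r\} \times (r, s]$, the third along $[r, s) \times \{s\}$, and the fourth along $V_{y_{i+1}}$. The crucial point-set identifications $(r, a_{i+1}) \in H_{y_i} \cap (\{r\} \times (r, s])$ and $(a_{i+1}, s) \in V_{y_{i+1}} \cap ([r, s) \times \{s\})$ follow from $r \in (a_i, a_{i+1})$, $s \in (a_{i+1}, a_{i+2})$ and $a_{i+1} \in (r, s)$, together with the observation that $(a_i, a_{i+1})$ and $(a_{i+1}, a_{i+2})$ contain no elements of $\Ocal(a_1)$ and $(r, s)$ contains none of $\Ocal(p)$. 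Terminal segments when $\I = [0, 1]$ are handled by the same construction with the outer coordinate $a_i$ or $a_{i+2}$ replaced by the relevant boundary point of $\I$.
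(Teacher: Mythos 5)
Your proposal is correct and takes essentially the same route as the paper: reduce to joining consecutive steps of the stair, use a transitive point $z$ in the left gap with $f^n(z)$ in the right gap, approximate $z$ by a nearby periodic point whose orbit then supplies a step $(r,s)$ straddling the shared link $a_{i+1}$, and concatenate the four segment legs through $(r,a_{i+1})$, $(r,s)$, $(a_{i+1},s)$. The only cosmetic differences are your explicit dismissal of the vacuous minimal case and your explicit disjointness requirement on the two periodic orbits, both of which are implicit in the paper (the latter because a step interval by definition contains no orbit points).
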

\begin{proof}
 We may assume without loss of generality that $(a,b)$ and $(a',b')$ lie on neighboring
 steps, that is, $(a,b)\in S_{(y_1,y_2)}$ and $(a',b')\in S_{(y_2,y_3)}$ for
 some $(y_1,y_2),(y_2,y_3)\in F_x$ (note that if $(a,b)$ or $(a',b')$ lies on a terminal segment, the following proof
 works exactly the same).
 As $f$ is transitive, there is a transitive point $y\in (y_1,y_2)$.
 By transitivity of $y$, there is $n\in \N$ such that $f^n(y)\in (y_2,y_3)$.
 Clearly, for a small enough interval $J\ssq (y_1,y_2)$ containing $y$,
 we have $f^n(J)\ssq (y_2,y_3)$.
 By denseness of periodic points, there is a periodic point $z\in J$.
 Let $z_1$ and $z_2$ be those points in the orbit of $z$ which are the furthest
 to the right in $\mc O(z)\cap (y_1,y_2)$ and the furthest to the left in
 $\mc O(z)\cap(y_2,y_3)$, respectively.
 Clearly, $(z_1,z_2)$ is a step and $S_{(z_1,z_2)}$ intersects both $S_{(y_1,y_2)}$ and $S_{(y_2,y_3)}$.
 Let $\gamma_1$ be some path in $S_{(y_1,y_2)}$ from $(a,b)$ to the unique intersection point
 $(c,d)$ of $S_{(y_1,y_2)}$ and $S_{(z_1,z_2)}$; let $\gamma_2$ be a path in $S_{(z_{1},z_{2})}$
 from $(c,d)$ to the unique intersection point
 $(c',d')$ of $S_{(z_1,z_2)}$ and $S_{(y_2,y_3)}$; let $\gamma_3$ be a path in
 $S_{(y_2,y_3)}$ from $(c',d')$ to $(a',b')$.
 Clearly, the concatenation of $\gamma_1$, $\gamma_2$ and $\gamma_3$ is a path in $\Bcal_f$ from $(a,b)$ to $(a',b')$.
\end{proof}

\begin{figure}[h]
	\centering
	\includegraphics[width=0.71\textwidth]{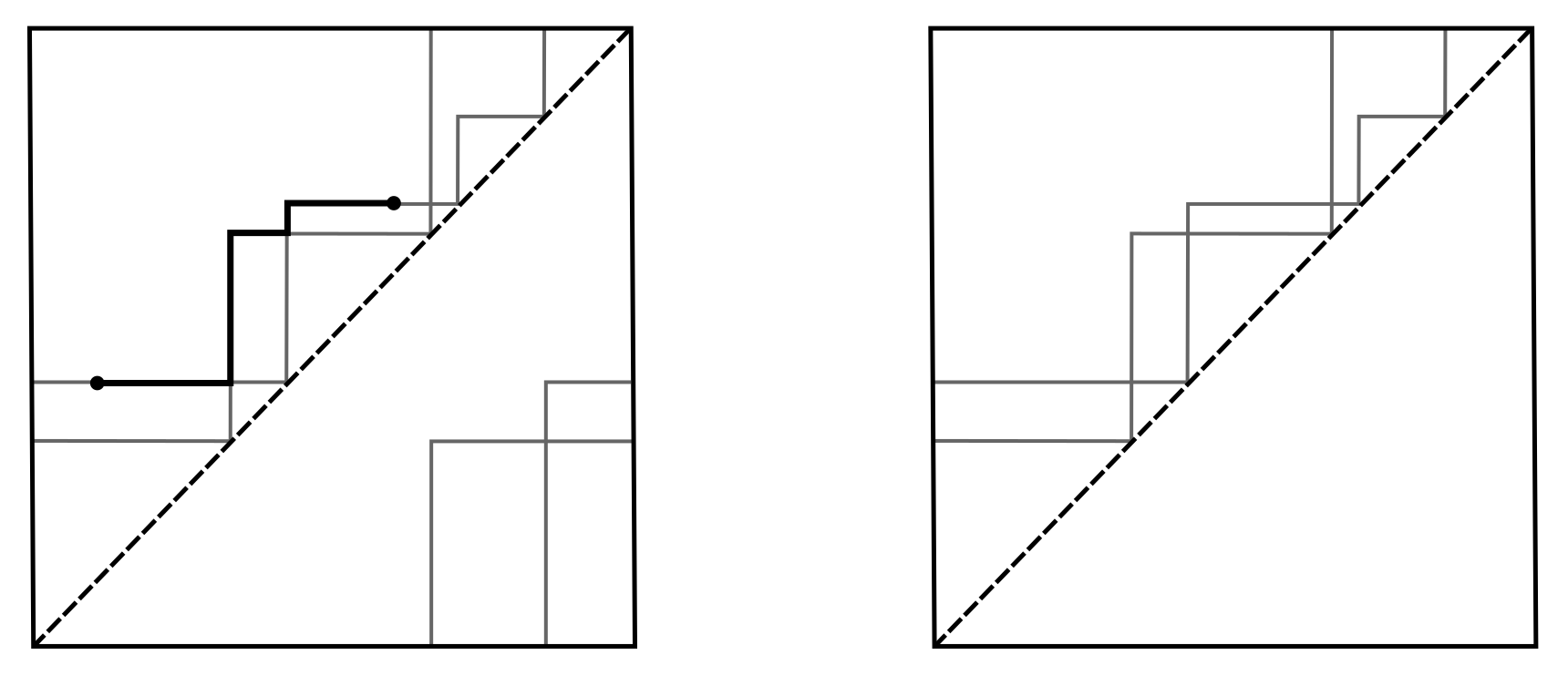}
	\caption{\small Stairs illustrated for maps on the circle (left) and on the
		interval (right). The left figure also depicts a path (bold
		line) as discussed in Proposition \ref{prop: stairs are path-connected}.\normalsize}
	\label{f.stairs and path}
\end{figure}

We next obtain point (\ref{thmA: 5}) of Theorem~\ref{t.main A}.

\begin{lemma}
 If $f\:\I\to\I$ is continuous and transitive, then $\Bcal_f$ is path-connected.
\end{lemma}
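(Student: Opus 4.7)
The plan is to combine the segment structure of $\Bcal_f$ established in Proposition~\ref{p.segment} with the path-connectedness of individual stairs established in Proposition~\ref{prop: stairs are path-connected}. If $f$ is minimal, then $\Bcal_f=\emptyset$ by the corollary before Corollary~\ref{c.escape rates} and there is nothing to prove, so I would assume $f$ is not minimal; by the fact cited before Proposition~\ref{prop: stairs are path-connected}, $\Per(f)$ is then dense in $\I$.

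Fix $x,y\in\Bcal_f$. By Proposition~\ref{p.segment}, there are maximal segments $L_x,L_y\in\Hcal(\Bcal_f)\cup\Vcal(\Bcal_f)$ containing $x$ and $y$ respectively, both accumulating at $\Delta_0$. The strategy is to produce a single stair $F\ssq\Bcal_f$ (coming from a periodic orbit) that intersects both $L_x$ and $L_y$, say at points $z_x$ and $z_y$. Once such an $F$ is in hand, concatenating (i) a path from $x$ to $z_x$ along the segment $L_x$, (ii) a path from $z_x$ to $z_y$ along $F$ supplied by Proposition~\ref{prop: stairs are path-connected}, and (iii) a path from $z_y$ to $y$ along $L_y$ produces the desired path in $\Bcal_f$.

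To construct $F$, let $I_x,I_y\ssq\I$ denote the open intervals determined by the coordinates of $x$ and $y$. I would first use transitivity together with density of $\Per(f)$ to exhibit a single periodic orbit $\mc O$ meeting both $I_x$ and $I_y$: transitivity provides some $n\in\N$ with $f^n(I_x)\cap I_y\neq\emptyset$, continuity yields an open $U\ssq I_x$ with $f^n(U)\ssq I_y$, and denseness of $\Per(f)$ then gives a periodic $p\in U$ whose orbit visits $I_y$ at time $n$. Arranging the period to be sufficiently large (so that, by the corollary after Proposition~\ref{prop: stairs and periodic orbits}, $\mc O$ corresponds to a stair $F$ with at least two links in $\I\setminus\partial\I$), I would then observe: if, for instance, $L_x=V_{(a_x,b_x)}$ is vertical and $p\in\mc O\cap I_x$ is the smallest orbit point strictly above $a_x$ with predecessor $p''\leq a_x$, then the horizontal segment $H_{(p'',p)}$ of $F$ spans all $x$-coordinates in $[p'',p)$ (by maximality and accumulation at $\Delta_0$), so that $(a_x,p)\in H_{(p'',p)}\cap L_x$ can serve as $z_x$. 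The other three orientation combinations for $L_x$ and $L_y$, as well as the situations where $L_x$ or $L_y$ is a terminal segment in the $\I=[0,1]$ case, are handled by the symmetric fact that vertical segments $V_{(p,p')}$ of $F$ span $y$-coordinates in $(p,p']$.

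The main obstacle is ensuring the single orbit $\mc O$ suffices, rather than having to bridge between distinct stairs (which need not share segments in $\Bcal_f$); this is the reason for first extracting the transitivity argument producing an orbit meeting both $I_x$ and $I_y$. Once that is settled, the remaining intersection check is a short routine case analysis based on the explicit span of horizontal and vertical segments of a stair, so no further difficulty arises.
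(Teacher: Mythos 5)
Your proof is correct, but it takes a route that is genuinely different from the one in the paper, and the difference is worth noting. The paper splits the argument into two modular pieces: (i) any two stairs already share a segment — on $\T$ because each stair wraps around $\Delta_0$, and on $[0,1]$ because the terminal segments of any stair accumulate at $\{0\}\times\I$ and $\I\times\{1\}$; (ii) any $x\in\Bcal_f$ can be connected to \emph{some} stair by first placing $x$ on a maximal segment (Proposition~\ref{p.segment}) and then picking, via density of periodic points, a periodic orbit with a point in the interval $I_x$. The final path is a concatenation $x\to F_{z_x}\to F_{z_y}\to y$ using Proposition~\ref{prop: stairs are path-connected} twice. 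You instead use transitivity directly to produce a \emph{single} periodic orbit meeting both $I_x$ and $I_y$ (find $n$ with $f^n(I_x)\cap I_y\neq\emptyset$, then a periodic point in a small $U\ssq I_x$ with $f^n(U)\ssq I_y$) and connect $x$ and $y$ through the one resulting stair. The geometric check you sketch (choosing the smallest orbit point above $a_x$ so that the corresponding horizontal segment of the stair covers $a_x$, and symmetrically for the other three orientation cases) is sound. What your approach buys is that it sidesteps step (i) entirely; what it costs is one additional transitivity argument on top of density of periodic points.

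One small correction: your stated motivation — that ``distinct stairs need not share segments in $\Bcal_f$,'' so a single orbit is required — is not actually true in this setting. The paper's observation in step (i) shows that any two stairs \emph{do} share a segment. Your workaround therefore avoids a non-obstacle, but since the workaround itself is valid, the resulting proof is correct; it is simply a little more elaborate than necessary at that point. If you prefer to use the structural fact, it is enough to observe that each stair, read along its sequence of segments, joins a point near $\{0\}\times\I$ to a point near $\I\times\{1\}$ while staying arbitrarily close to $\Delta_0$ in between (and wraps around $\Delta_0$ on the circle), which forces any two such unions of segments to cross.
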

\begin{proof}
We first observe that given two points $x$ and $y$ on stairs $F_x$ and $F_y$, respectively,
there is a path in $\Bcal_f$ from $x$ to $y$.
To see this, it suffices --due to the previous statement-- to show that there is a non-empty intersection between
some segment associated to $F_x$ and some segment associated to $F_y$.
This, however, follows immediately from the fact that
on $\I=\T^1$, each stair wraps around $\Delta_0$ while on $\I=[0,1]$,
the horizontal and vertical terminal segment of each stair accumulates at
$\{0\}\times \I$ and $\I\times \{1\}$, respectively (see Figure \ref{f.stairs and path}).

Now, suppose we are given arbitrary points $x,y\in \Bcal_f$.
Due to Proposition~\ref{p.segment}, we may assume without loss of generality that
$x=(a,b)$ lies on a non-trivial horizontal segment $H$.
Due to the denseness of periodic points, we find a periodic point $c\in \I$ with
$c\in (a,b)$.
Without loss of generality, we may assume that $\mc O(c)$ contains at least two points in $\I\setminus\partial \I$.
Choose $c'$ to be the right-most point in $\mc O(c)\cap (a,b)$.
Then, the vertical segment (terminal or not) of the stair associated to $\mc O(c)$ which accumulates at $(c',c')$ clearly
intersects $H$.
Hence, there is a path $\gamma_1$ from $x$ to a point $z_x$ on a stair $F_{z_x}$ in $\Bcal_f$.
Likewise, we obtain a path $\gamma_2$ from $y$ to a point $z_y$ on a stair $F_{z_y}$ in $\Bcal_f$ whose inverse (from $z_y$
to $y$) we denote by $\overline{\gamma_2}$.
By the above observation, there is a path $\gamma_3$ in $\Bcal_f$ from $z_x$ to $z_y$.
Altogether, the concatenation $\overline{\gamma_2}\cdot \gamma_3\cdot \gamma_1$ is a path in $\Bcal_f$ from $x$ to $y$ which proves the statement.
\end{proof}

%%%%%%%%%%%%%%%%%%%%%%%%%%%%%%%%%%%%%%%%%%%%%%%%%%%%%%%%%%%%%%%%%%%%%%%%%%%%%%%%

\section{Proof of Theorem \ref{t.main B}}\label{s.proofthmB}

In this section, we turn to the problem of identifying critical points and their
dynamical behavior by means of the bifurcation set.
For this recall that given a continuous map $f\:\I\to\I$, a point $x\in \I$ is
referred to as \emph{critical} if there is no neighborhood of $x$ on which $f$ is
monotone.
The collection of all critical points of $f$ is denoted by $\Cri(f)$.

Let us point out that Theorem~\ref{t.main B} follows from
Theorem \ref{t.bifsetandcrit} (the main result of this section), 
see Remark \ref{r.piecewise monotone from uniformly expanding}.

\subsection{Implications of hyperbolicity}

Besides transitivity, we will impose additional assumptions on the map $f$.
In particular, we will assume certain forms of hyperbolicity. As we are dealing with
results of a topological flavor, we consider the following definition of hyperbolicity:
an $f$-invariant set $A\ssq \I$ is referred to as \emph{hyperbolic} for a continuous map $f:\I\to\I$
and a compatible metric $d$ if there exist $\eps>0$ and $\lambda>1$ and an open neighborhood $U$ of $A$ such that
$d(f(x),f(y))>\lambda \cdot d(x,y)$ for all $x,y\in U$ with $d(x,y)<\eps$.
In this case, we may also say that $f$ is \emph{$\eps$-locally $\lambda$-expanding on $U$} (with respect to $d$).
Note that a smooth map $f:\I\to\I$ which is hyperbolic on an invariant set $A$ in the classical sense is also hyperbolic
in the above sense with respect to some metric $d$ equivalent to the usual one
(see for instance the proof of Theorem~2.3 in Chapter~III of \cite{MeloStrien1993}).
Henceforth, all metrics are considered to be equivalent to the standard metric on $\I$ and throughout denoted by $d$.

We call $x\in \I$ \emph{hyperbolic} if $\overline{\Ocal(x)}$ is
hyperbolic in the above sense.
Notions like \emph{hyperbolic steps} or \emph{hyperbolic double points} are defined in the natural way.

Suppose $x\in\I$ is a periodic point of $f:\I\to\I$ with minimal period $p$.
We say that $f$ \emph{preserves orientation at} $a\in\Ocal(x)$ whenever
$f^p|_J$ preserves orientation in some neighborhood $J$ of $a$.
Otherwise, we say that $f$ \emph{reverses orientation at} $a$.
Given $a,b\in\Ocal(x)$, we denote by $n_{a,b}$ the minimum time for going from
$a$ to $b$ by iteration of $f$.
We say that $f$ \emph{preserves orientation from $a$ to $b$} whenever
$f^{n_{a,b}}|_J$ preserves orientation in some neighborhood $J$ of $a$.
Otherwise, we say that $f$ \emph{reverses orientation from $a$ to $b$}.

Concerning the next statement, recall that due to Proposition~\ref{prop: stairs and periodic orbits}
every step is associated to a periodic point.
We may hence refer to the period of this periodic point also as the \emph{period} of the respective step.

\begin{lemma}\label{l.periodiccorner}
	Let $f:\I\to\I$ be continuous and transitive.
	Suppose $(a,b)\in\Bcal_f$ is a hyperbolic step of period $p$.
	The following holds.
	\begin{enumerate}
		\item If $f$ reverses orientation at $a$ or $b$, then $(a,b)$ is an isolated
			corner point of $\mathcal{B}_f$.\label{lem: hyperbolic steps item 1}
		\item If $f$ preserves orientation both at $a$ and $b$, then
		     $(a,b)$ is isolated from below.\label{lem: hyperbolic steps item 2}
	\end{enumerate}
\end{lemma}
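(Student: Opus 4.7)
The plan is to exploit hyperbolicity of the orbit carrying the step $(a,b)$ together with the orientation of the return map $f^p$ at $a$ and $b$. Before entering the case analysis, I would observe that $a$ and $b$ belong to a single $f$-periodic orbit of period $p$ (Proposition~\ref{prop: stairs and periodic orbits}); writing $b=f^j(a)$, the factorisations $f^p = f^{p-j}\circ f^j$ near $a$ and $f^p = f^j\circ f^{p-j}$ near $b$ combine the same two monotone germs ($f^j$ from a neighborhood of $a$ to one of $b$, and $f^{p-j}$ from a neighborhood of $b$ to one of $a$), so $f^p$ has the same orientation at $a$ as at $b$. Hence the hypothesis ``reverses at $a$ or $b$'' in~(\ref{lem: hyperbolic steps item 1}) is equivalent to reversal at both, and similarly for~(\ref{lem: hyperbolic steps item 2}). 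I would then fix $\lambda>1$, $\eps>0$ and a neighborhood $U\supseteq\Ocal(a)$ on which $f$ is $\eps$-locally $\lambda$-expanding, yielding the lower bounds $|f^p(x)-a|\geq\lambda^p|x-a|$ and (provided the intermediate iterate stays in $U$) $|f^{2p}(x)-a|\geq\lambda^{2p}|x-a|$ for $x$ close to $a$, and analogously at $b$; the matching upper control is supplied by continuity of $f^p$ and $f^{2p}$ at $a$ and $b$.

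For part~(\ref{lem: hyperbolic steps item 2}) I would take $(a',b')$ close to $(a,b)$ with $a<a'<b'<b$ and show $f^p(a'),f^p(b')\in(a',b')$. Orientation preservation at $a$ places $f^p(a')$ on the same side of $a$ as $a'$; the lower bound gives $f^p(a')-a\geq\lambda^p(a'-a)>a'-a$, so $f^p(a')>a'$; continuity makes $f^p(a')<b'$ once $(a',b')$ is close enough to $(a,b)$. The symmetric argument at $b$ yields $f^p(b')\in(a',b')$. Therefore neither $a'$ nor $b'$ lies in $\Scal_f(a',b')$, whence $(a',b')\notin\Bcal_f$ and $(a,b)$ is isolated from below.

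For part~(\ref{lem: hyperbolic steps item 1}) I would consider $(a',b')$ close to $(a,b)$ with $a'\neq a$ and $b'\neq b$ and show that the orbits of both $a'$ and $b'$ enter $(a',b')$ after at most two iterates of $f^p$. The orientation-reversing germ at $a$ sends $a'$ across $a$: if $a'<a$, then $f^p(a')>a$ stays close to $a$ by continuity and therefore lies in $(a',b')$ (which always extends to the right of $a$ since $b'$ is close to $b>a$); if $a'>a$, then $f^p(a')<a<a'$ escapes past the hole, but $f^{2p}(a')$ returns to the right of $a$ with displacement at least $\lambda^{2p}(a'-a)>a'-a$, giving $f^{2p}(a')>a'$, while continuity keeps $f^{2p}(a')<b'$. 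A symmetric analysis at $b$ (one iterate when $b'>b$, two when $b'<b$) handles $b'$. Hence both $a',b'\notin\Scal_f(a',b')$, so $(a',b')\notin\Bcal_f$ and $(a,b)$ is isolated.

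The hardest part will be the two-iterate branches in~(\ref{lem: hyperbolic steps item 1}): to justify the second expansion I must verify that $f^p(a')$ still lies in the hyperbolicity neighborhood $U$, which follows from continuity of $f^p$ at $a$, and then combine two hyperbolic lower bounds with a continuous upper bound to land $f^{2p}(a')$ strictly inside $(a',b')$. The smallness conditions on $|a'-a|$ and on the distance from $(a',b')$ to $(a,b)$ must be imposed in the right order. Once this is handled, the remaining positional configurations and the dual statements for $b'$ are straightforward adaptations.
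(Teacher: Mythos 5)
Your argument follows essentially the paper's route. Your preliminary observation that $f^p$ has the same orientation germ at $a$ as at $b$ --- since they lie on one periodic orbit (Proposition~\ref{prop: stairs and periodic orbits}) and the two factorizations $f^{p-j}\circ f^j$ and $f^j\circ f^{p-j}$ compose the same two monotone germs --- is a genuinely useful clarification that the paper leaves implicit; it is what justifies treating ``reverses at $a$ or $b$'' as reversal at both, which the paper's symmetric reduction (WLOG $a'\in\Scal_f(a',b')$) tacitly relies on. Part~(\ref{lem: hyperbolic steps item 2}) matches the paper exactly, and your case analysis in part~(\ref{lem: hyperbolic steps item 1}) (one iterate of $f^p$ when the coordinate lies on the far side, two when it lies strictly inside, with the flagged check that the intermediate iterate remains in the hyperbolicity neighborhood) coincides with the paper's use of $f^{2p}$ for $a'\in(a,b)$ and with its unspoken ``other case'' for $a'<a$.

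The one real gap is that part~(\ref{lem: hyperbolic steps item 1}) restricts to $(a',b')$ with $a'\neq a$ and $b'\neq b$. The definition of an isolated corner point requires $U\cap\Bcal_f=U\cap(H_{(a,b)}\cup V_{(a,b)})$, and nearby points off $H\cup V$ also include $(a,b')$ with $b'>b$, and $(a',b)$ with $a'<a$; excluding these from $\Bcal_f$ is precisely what shows the maximal segments $V_{(a,b)}$ and $H_{(a,b)}$ actually terminate at $(a,b)$. The paper treats $a'=a$ as a separate case: since $b\in\Ocal(a)$ and $b\in(a,b')$ one has $a\notin\Scal_f(a,b')$ directly (the paper invokes Proposition~\ref{p.endpointbifpoint}), and your one-iterate reversal argument at $b$ then handles $b'$; the configuration $b'=b$, $a'<a$ is handled analogously. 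These two additions are easy and fit your framework, but as written the proof of part~(\ref{lem: hyperbolic steps item 1}) is incomplete without them.
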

\begin{proof}
 We start by proving \eqref{lem: hyperbolic steps item 1}.
 For a contradiction, suppose there is $(a',b')\in \Bcal_f\setminus (V_{(a,b)}\cup H_{(a,b)})$ arbitrarily close to $(a,b)$.
 Without loss of generality, we may assume $a'\in \Scal_f(a',b')$.

 First, consider $a'=a$.
 Then we necessarily have $b'\in [a,b]^c$ (since otherwise we had $(a',b')\in V_{(a,b)}$)
 and thus $(a',b')\supsetneq (a',b)$.
 As the orbit of $a'=a$ accumulates at $b$ (see Proposition~\ref{p.endpointbifpoint}), this gives $a'\notin \Scal_f(a',b')$.

 Now, consider $a'\neq a$ and assume without loss of generality that $a'\in (a,b)$ (the other case can be dealt with similarly).
 Assuming that $(a',b')$ is sufficiently close to $(a,b)$,
 we have $f^{2p}(a') \in (a',b')$ since $f$ is expanding in a neighborhood of $\mc O(a)$
 (as it is hyperbolic on $\mc O(a)$) and $f^{2p}$ is order-preserving in a
 neighborhood of $a$.
 Hence, $a'\notin \Scal_f(a',b')$.
 This contradicts the assumptions on $a'$ and finishes the proof of the first part.

 Let us now turn to part \eqref{lem: hyperbolic steps item 2}.
 Assume for a contradiction that there is $(a',b')\in \Bcal_f$
 arbitrarily close to $(a,b)$ with $[a',b']\ssq (a,b)$.
 As $f^p$ is expanding and order preserving both in $a$ and $b$, we have
 $f^p(a'),f^p(b')\in (a',b')$ if $a'$ and $b'$ are sufficiently close to $a$ and $b$.
 Hence, $a',b'\notin \Scal_f(a',b')$ which contradicts the assumptions.
\end{proof}
\begin{rem}
 Assume the situation of the previous statement.
 It is not hard to see that if $f$ preserves orientation at $a$ and $b$ and
 additionally preserves orientation from $a$ to $b$, then $(a,b)$ is actually isolated.
 In particular, if $f$ is uniformly expanding, every step is isolated.
\end{rem}

Recall that given $x\in \I$, its \emph{$\w$-limit set} $\w_f(x)$ is defined to be the collection
of all accumulation points of $\mc O(x)$.
It is well known and easy to see that $\w_f(x)$ is non-empty, compact and contains \emph{recurrent points}, that is,
there is $y\in \w_f(x)$ such that $y\in \w_f(y)$.

We call a double point $(a,b)\in\Bcal_f$ \emph{(pre)periodic}, if both $a$ and $b$
are (pre)periodic.
The proof of the next statement makes use of standard shadowing arguments.

\begin{lemma}\label{lem: approximation by preperiodic elements}
 Let $f\:\I\to\I$ be continuous and transitive.
 Consider $(a,b)\in \Bcal_f$ with $a\in \Scal_f(a,b)$ and $b\notin \Scal_f(a,b)$
 and suppose the orbit of $a$ is hyperbolic.
 If $a$ is not preperiodic, then $(a,b)$ is accumulated by points of the form $(\tilde a,b)\in \Bcal_f$ with
 $\tilde a$ preperiodic,
 $\mc O(\tilde a)$ hyperbolic and $\tilde a \in \Scal_f(\tilde a,b)$.
 A similar statement holds if we interchange the roles of $a$ and $b$.

 Moreover, if $(a,b)\in \Bcal_f$ is a double point which is not preperiodic and the orbits of $a$ and $b$ are
 hyperbolic, then $(a,b)$ is accumulated by hyperbolic preperiodic double points.
\end{lemma}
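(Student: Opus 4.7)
The plan is to run a shadowing / closing-lemma argument adapted to one-dimensional hyperbolic sets: produce a periodic point $p$ close to a recurrent point in $\w_f(a)$, then pull $p$ back via local inverse branches of $f$ along the orbit of $a$ to obtain a preperiodic approximation $\tilde a$. The bulk of the work will then be in checking that such a $\tilde a$ can be chosen so that $\tilde a\in\Scal_f(\tilde a,b)$.

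First I would fix an open neighborhood $U$ of $\overline{\Ocal(a)}$ and constants $\lambda>1$, $\eps_0>0$ such that $f$ is $\eps_0$-locally $\lambda$-expanding on $U$, so that the local inverse branches of $f$ on $\eps_0$-balls inside $U$ are $\lambda^{-1}$-contractions. Since $a$ is not preperiodic, $\Ocal(a)$ is infinite, hence $\w_f(a)$ is non-empty and contains a recurrent point $y$. A standard closing-lemma argument for expanding maps (iterate a local inverse on a small disk around a well-chosen return of $y$) then produces, for every prescribed $\delta>0$, a periodic point $p\in U$ with $d(p,y)<\delta$ whose forward orbit stays in a $\delta$-neighborhood of a finite orbit segment of $y$, so that in particular $\overline{\Ocal(p)}\ssq U$. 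Next I would pick $N\in\N$ with $d(f^N(a),y)<\delta$, hence $d(f^N(a),p)<2\delta$, and define recursively $p_N:=p$, $p_k:=g_k(p_{k+1})$, where $g_k$ is the local inverse of $f$ near $f^k(a)$. The $\lambda^{-1}$-contraction of each $g_k$ gives $d(p_k,f^k(a))\le 2\delta\,\lambda^{-(N-k)}$, and in particular $\tilde a:=p_0$ lies within $2\delta\,\lambda^{-N}$ of $a$. By construction $f^N(\tilde a)=p$, so $\tilde a$ is preperiodic, and its forward orbit $\{p_0,\dots,p_{N-1}\}\cup\Ocal(p)$ is contained in $U$, making $\Ocal(\tilde a)$ hyperbolic.

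The main obstacle is verifying $\Ocal(\tilde a)\cap(\tilde a,b)=\emptyset$. The orbit of $\tilde a$ shadows $\Ocal(a)$ for the first $N$ steps (with errors decaying exponentially from $2\delta$ at time $N-1$ down to $2\delta\lambda^{-N}$ at time $0$) and then tracks the orbit of $p$, which itself lies in a $\delta$-neighborhood of $\w_f(a)$. Since $\Ocal(a)\cup\w_f(a)\ssq\I\setminus(a,b)$, an orbit point of $\tilde a$ can only enter $(\tilde a,b)$ if it crosses one of the endpoints $a$ or $b$. The hypothesis $a\in\Scal_f(a,b)$ forces $\Ocal(a)$ (and therefore $\w_f(a)$) to approach $a$ and $b$ only from the outside of $(a,b)$; combining this with the fact that the local inverse branches $g_k$ respect the local orientation of $f$ around each $f^k(a)$, one can arrange the side on which $\tilde a$ sits relative to $a$ and shrink $\delta$ so that all shadow points remain outside $(\tilde a,b)$. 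The finite portion $\{p_0,\dots,p_{N-1}\}$ is then checked directly: the finitely many iterates $f^k(a)$ nearest to $\{a,b\}$ give the decisive constraints on $\delta$ and the sidedness of $\tilde a$.

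The interchange-of-roles statement follows by the symmetric argument applied to $b$ using inverse branches along its orbit. For the ``moreover'' statement, at least one coordinate of the double point $(a,b)$ is not preperiodic; if only one is (say $a$), apply the main statement to obtain $\tilde a$ preperiodic and close to $a$ with $(\tilde a,b)\in\Bcal_f$, then note that placing $\tilde a$ inside $(a,b)$ gives $(\tilde a,b)\ssq(a,b)$, so that $b\in\Scal_f(a,b)$ implies $b\in\Scal_f(\tilde a,b)$ and $(\tilde a,b)$ remains a double point. If both $a$ and $b$ are non-preperiodic, carry out the shadowing construction independently for each, using the hyperbolic orbits $\Ocal(a)$ and $\Ocal(b)$ to produce preperiodic approximations $\tilde a,\tilde b$ with hyperbolic orbits, and conclude by the same sidedness argument that $\tilde a,\tilde b\in\Scal_f(\tilde a,\tilde b)$.
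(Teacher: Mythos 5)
Your overall strategy mirrors the paper's: close a recurrent point of $\w_f(a)$ to a nearby periodic orbit and pull it back along $\Ocal(a)$ to a preperiodic $\tilde a$ that shadows $a$. The crucial step --- verifying $\tilde a\in\Scal_f(\tilde a,b)$ --- is where your proposal has two concrete gaps. First, you never use the hypothesis $b\notin\Scal_f(a,b)$, yet this is precisely what keeps the shadow orbit away from $b$: since $a\in\Scal_f(a,b)$, the set $\overline{\Ocal(a)}=\Ocal(a)\cup\w_f(a)$ is an $f$-invariant subset of $\I\setminus(a,b)$, so if $b\in\overline{\Ocal(a)}$ then $\Ocal(b)\ssq\I\setminus(a,b)$, contradicting $b\notin\Scal_f(a,b)$; hence $\overline{\Ocal(a)}$ is at positive distance from $b$, and once $\eps$ is smaller than that distance the shadow cannot enter $(\tilde a,b)$ near $b$. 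Without this observation, "shrinking $\delta$" is not enough, because if $\Ocal(a)$ accumulated at $b$ from outside $(a,b)$ you would have shadow points overshooting into $(\tilde a,b)$ arbitrarily close to $b$ for every $\delta$. Second, $\Ocal(a)$ can genuinely accumulate at $a$, so there is no finite set of "decisive" iterates near $a$ and the sidedness of the inverse branches is dictated by $f$, not freely arrangeable. Setting $\tilde a:=p_0$ leaves $p_1,\dots,p_{N-1}$ (and the periodic tail) unchecked. The paper's fix is to define $\tilde a$ as the \emph{right-most} point of the shadow orbit inside a small ball around $a$: then no shadow point near $a$ can lie in $(\tilde a,b)$ by the very choice of $\tilde a$, and the positive distance to $b$ handles the other endpoint.

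The \emph{moreover} part is also mishandled: you invoke the first part "as is", but its hypothesis $b\notin\Scal_f(a,b)$ fails at a double point, and in that case $\Ocal(a)$ may well accumulate at $b$ from outside the hole, so the shadow of $\tilde a$ can enter $B_{2\eps}(b)$. The paper's remedy is to accept this possibility and, when it occurs, take $\tilde b$ to be the left-most shadow point in $B_{2\eps}(b)$; since $\tilde b\in\Ocal(\tilde a)$, the pair $(\tilde a,\tilde b)$ shares a single preperiodic orbit and is automatically a double point. Running two independent shadowing constructions for $a$ and $b$, as you suggest when both are non-preperiodic, fails for the same reason: each of the two shadow orbits could enter $(\tilde a,\tilde b)$ near the opposite endpoint, and there is no obvious way to make the two constructions compatible without the paper's sequential device.
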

\begin{proof}
 Let $a\in \Scal_f(a,b)$ (the other case is similar) and assume $a$ is not preperiodic.
 Due to the assumptions, there is an open set $U$ (and a compatible metric $d$) with $\mc O(a)\cup \w_f(a)\ssq U$ such
 that $f$ is $\delta$-locally $\lambda$-expanding on $U$.
 Without loss of generality, we may assume that $\delta>0$ is such that $B_\delta(x)\ssq U$
 for all $x\in\mc O(a)\cup \w_f(a)$.

 Choose some $\eps<\delta/2$ and let $c\in \w_f(a)$ be a recurrent point.
 Pick $n\in \N$ with $d(f^n(c),c)<\eps$.
 Since $f$ is $\delta$-locally $\lambda$-expanding on $U$, we may assume without loss of generality that $n$ is
 large enough to ensure that $f^n(B_\eps(c))\supseteq B_{2\eps}(f^n(c))$.
 Choose $I$ to be the connected component of $\bigcap_{\ell=1}^nf^{-\ell}\left(B_{2\eps}(f^{\ell}(c))\right)\cap B_{\eps}(c)$
 which contains $c$.

 By the assumptions on $n$, we have $f^n(I)=B_{2\eps}(f^n(c))\supseteq B_{\eps}(c)\supseteq I$.
 Hence, there is a periodic point $d\in I$ of period $n$ whose orbit is $2\eps$-close to $\w_f(a)$ (by definition of $I$).
 Since $f$ is $\delta$-locally $\lambda$-expanding on $U$, there further is $m\in \N$ and a point $a'\in \I$ such that $f^m(a')=d$ and
 $\max_{\ell=0,\ldots,m-1} d(f^\ell(a'),f^\ell(a))<2\eps$.
 Set $\tilde a$ to be the right-most point of $\mc O(a')\cap B_{2\eps}(a)$.

 Let $b\notin \Scal_f(a,b)$.
 Then $\mc O(a)$ is at positive distance to $b$ (otherwise $a$ would not survive) and we may assume $\eps>0$ to be small enough
 to ensure that $a$ does not come $2\eps$-close to $b$ so that
 $\mc O(\tilde a)\cap (\tilde a,b)=\emptyset$, i.e., $\tilde a\in \Scal_f(\tilde a,b)$.
 As $\eps$ can be chosen arbitrarily small, the first part follows.

 Next, let us assume $b\in \Scal_f(a,b)$, that is, $(a,b)$ is a double point.
 If $\mc O(\tilde a)\cap B_{2\eps}(b)\neq \emptyset$, set $\tilde b$ to be the left-most point
 in $\mc O(\tilde a)\cap B_{2\eps}(b)$.
 Then, $(\tilde a,\tilde b)$ is preperiodic and moreover a double point
 with $d(\tilde a,a),d(\tilde b,b)<2\eps$.
 If $\mc O(\tilde a)\cap B_{2\eps}(b)=\emptyset$, then $(\tilde a,b)$ is a double point.
 If $b$ is preperiodic, $(\tilde a,b)$ is hence a preperiodic double point $2\eps$-close
 to $(a,b)$.
 If $b$ is not preperiodic and $\mc O(\tilde a)\cap B_{2\eps}(b)= \emptyset$, repeat the above argument
 for $(\tilde a,b)$ with the roles of $a$ and $b$ interchanged.
 In all cases, we end up with a preperiodic double point $(\tilde a,\tilde b)$ with
 $d(\tilde a,a),d(\tilde b,b)<4\eps$.
 Since $\eps>0$ can be chosen arbitrarily small, this finishes the proof.
\end{proof}

Given $(a,b)\in \Bcal_f$, we call $\{a,b\}\cap \Scal_f(a,b)$ the \emph{surviving endpoints} of $(a,b)$.

\begin{thm}
 Let $f\:\I\to\I$ be continuous and transitive and suppose every critical point of $f$ is transitive.
 Assume further that every transitive invariant subset of $\I$ which does not contain a critical point is hyperbolic.
 Then the mapping $g\mapsto \Bcal_g\in 2^{\Delta}$ is continuous in $f$ with
 respect to the uniform topology on the space of continuous self-maps on $\I$ and the Hausdorff metric on $2^{\Delta}$.
\end{thm}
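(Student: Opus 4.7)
Since $\Delta$ is relatively compact and each $\Bcal_g$ is closed in $\Delta$, convergence $\Bcal_{f_n} \to \Bcal_f$ in the Hausdorff metric is equivalent to the conjunction of (i) upper semi-continuity, $\limsup_n \Bcal_{f_n} \ssq \Bcal_f$, and (ii) lower semi-continuity, meaning every $(a,b) \in \Bcal_f$ is the limit of some sequence $(a_n,b_n) \in \Bcal_{f_n}$. Statement (i) is exactly Proposition~\ref{prop: Bf is lower semi-continuity}, so the entire task reduces to (ii). My plan is to establish (ii) in two stages: first approximate $(a,b)$ by a hyperbolic preperiodic element of $\Bcal_f$ (using Lemma~\ref{lem: approximation by preperiodic elements}), then persist that element to $\Bcal_{f_n}$ using standard $C^0$-perturbation arguments for hyperbolic orbits.

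For the approximation stage, given $(a,b) \in \Bcal_f$ assume without loss of generality $a \in \Scal_f(a,b)$. The orbit $\mc O(a)$ avoids the open set $(a,b)$ and so is not dense; hence $\overline{\mc O(a)}$ contains no critical point, since every critical point is transitive by hypothesis and thus has dense orbit. Therefore $\overline{\mc O(a)}$ is a transitive $f$-invariant set without critical points, and by the standing hypothesis it is hyperbolic; the same argument applies to $b$ whenever $b \in \Scal_f(a,b)$. Lemma~\ref{lem: approximation by preperiodic elements} then supplies an arbitrarily close point $(\tilde a, \tilde b) \in \Bcal_f$ whose surviving endpoints are preperiodic with hyperbolic orbits.

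For the persistence stage, the hyperbolic periodic tail of $\mc O_f(\tilde a)$ persists as a hyperbolic periodic orbit of $f_n$ for $n$ large (standard fixed-point argument for the locally contracting inverse branches), and the finite preperiodic strand leading into it pulls back through the continuous family of non-critical inverse branches to a preperiodic point $\tilde a_n$ of $f_n$ close to $\tilde a$, whose $f_n$-orbit converges pointwise to $\mc O_f(\tilde a)$. Running the same argument for $\tilde b$ (if it survives) and choosing endpoints $(a_n, b_n)$ close to $(\tilde a, \tilde b)$ so that the perturbed orbits still lie in the closed complement of the perturbed hole yields $(a_n, b_n) \in \Bcal_{f_n}$ close to $(\tilde a, \tilde b)$, and hence close to $(a,b)$.

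The main obstacle is the boundary bookkeeping at the end of the persistence stage: the finite $f$-orbit of $\tilde a$ may touch $\tilde a$ or $\tilde b$ exactly, in which case a naive choice of $(a_n, b_n)$ can push the perturbed orbit into the open hole. If $\tilde b$ lies on $\mc O_f(\tilde a)$ then $\tilde b$ is itself preperiodic, placing us in the genuine double-point conclusion of Lemma~\ref{lem: approximation by preperiodic elements} so that both endpoints perturb coherently through matched preperiodic orbits; otherwise finiteness of the orbit together with the hyperbolic control on the shadowing error allows the hole to be shrunk by an $o(1)$ amount that absorbs the perturbation. Keeping track of this, while making sure that neither $a_n$ nor $b_n$ drifts out of $\Delta$, is where the argument requires care but introduces no new ideas.
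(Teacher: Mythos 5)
Your approach follows the same skeleton as the paper's proof: reduce to the one-sided inclusion $\Bcal_f\ssq B_\eps(\Bcal_{f_n})$ for $n$ large (the other direction being Proposition~\ref{prop: Bf is lower semi-continuous}), observe that the hypotheses force the orbit closures of surviving endpoints to miss $\Cri(f)$ and hence be hyperbolic, push to hyperbolic \emph{preperiodic} points via Lemma~\ref{lem: approximation by preperiodic elements}, and persist those under perturbation. The paper does exactly this, uniformizing with a finite $\eps$-net of the precompact set $\Bcal_f$.

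Two caveats. First, at the persistence stage you appeal to a ``standard fixed-point argument for the locally contracting inverse branches'' and assert that the hyperbolic periodic tail persists as a \emph{hyperbolic} periodic orbit of $f_n$. This is the one place where the $C^0$ topology bites: $f_n$ is only uniformly close to $f$, so the inverse branches of $f_n$ need not be contractions (nor even single-valued), and the perturbed periodic orbit need not be hyperbolic for $f_n$. The paper replaces the contraction argument with a covering/intermediate-value argument: from $f^p(B_\delta(y_\ell))\supseteq B_{3\delta}(f^p(y_\ell))$ (coming from hyperbolicity of $f$, not $f_n$) and $d_\infty(f_n^p,f^p)<\delta$, one gets $f_n^p(B_\delta(y_\ell))\supseteq B_{2\delta}(f^p(y_\ell))$ and likewise $f_n^p(B_\delta(f^p(y_\ell)))\supseteq B_{2\delta}(f^p(y_\ell))$, whence a fixed point of $f_n^p$ exists topologically and can be pulled back. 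That covering formulation is the one that survives $C^0$-perturbation; any write-up should use it rather than a Banach-contraction phrasing. Second, the ``boundary bookkeeping'' you identify as the main obstacle is resolved in the paper not by shrinking the hole by an $o(1)$ amount, but by picking the right-most point of $\mc O(x_\ell^k)$ near $\tilde a_j$ and the left-most near $\tilde b_j$ as the new endpoints, which guarantees the perturbed orbit never enters the perturbed hole by construction. Your sketch gestures at the same issue but this explicit choice is what makes the argument airtight.
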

\begin{proof}
 According to Proposition~\ref{prop: Bf is lower semi-continuous}, given a sequence
 $(f_n)_{n\in\N}$ of continuous maps $f_n\:\I\to\I$ with $f_n\to f$ uniformly as $n\to\infty$,
 it suffices to show that for each  $\eps>0$ there is $n_0$ such that
 for all $n\geq n_0$ we have $B_{\eps}\left(\Bcal_{f_n}\right)\supseteq \Bcal_f$.

 Pick $\eps>0$.
 Observe that due to Proposition~\ref{p.closed}, $\Bcal_f$ is precompact so that
 there are finitely many $(a_1,b_1),\ldots,(a_M,b_M)\in \Bcal_f$ with
 $\Bcal_f\ssq \bigcup_{j=1}^M B_{\eps}\left((a_j,b_j)\right)$.
 As the elements of $\Cri(f)$ are transitive, we further have that
 the orbits of the surviving endpoints of $(a_1,b_1),\ldots,(a_M,b_M)$ are
 at a positive distance to $\Cri(f)$.
 By the assumptions, this yields that the orbits of the surviving endpoints are hyperbolic.
 Hence, by Lemma~\ref{lem: approximation by preperiodic elements}, there are
 $(\tilde a_1,\tilde b_1),\ldots,(\tilde a_M,\tilde b_M)\in \Bcal_f$
 with $\Bcal_f\ssq \bigcup_{j=1}^M B_{2\eps}\left((\tilde a_j,\tilde b_j)\right)$ and
 such that at least one of the surviving endpoints of each pair among
 $(\tilde a_1,\tilde b_1),\ldots,(\tilde a_M,\tilde b_M)$ is preperiodic and hyperbolic.
 We denote these surviving endpoints by
 $y_1,\ldots,y_N$ (where $M\leq N\leq 2M$).

 Let $p$ be bigger than $\max_{\ell=1,\ldots, N} \#\mc O(y_\ell)$ and such that
 $y_\ell$ ($\ell=1,\ldots,N$) is eventually $p$-periodic.
 Observe that $f^p$ maps the points $y_1,\ldots,y_N$ to
 fixed points of $f^p$.
 By possibly going over to multiples of $p$, we may assume without loss of generality that there is $\delta>0$ such that
 $f^p$ is $(2\delta)$-locally $3$-expanding in a neighborhood of $\bigcup_{\ell=1}^N \mc O(y_\ell)$.
 We may further assume $\delta$ to be small enough such that
 \[
	d(f^m(x),f^m(y_\ell))<1/2\cdot\min\left\{ \min_{j=1}^M d(\tilde a_j,\tilde b_j),
	\eps\right\}=:\eps_0 \qquad (m=0,\ldots,2p)
 \]
 whenever $d(x,y_\ell)<\delta$ with $\ell\in\{1,\ldots,N\}$.
 Choose $n_0$ such that for all $k\geq n_0$ we have
 $d_\infty(f^m_k,f^m)<\delta$ for all $m=1,\ldots,p$.

 Now,
 $f^p(B_{\delta}(y_\ell))\supseteq B_{3\delta}(f^p(y_\ell))$ so that
 $f^p_k(B_{\delta}(y_\ell))\supseteq B_{2\delta}(f^p(y_\ell))$ for $\ell=1,\ldots,N$ and $k\geq n_0$.
 Similarly, we have
 $f^{p}_k(B_{\delta}(f^p(y_\ell)))\supseteq B_{2\delta}(f^p(y_\ell))$.
 Altogether, this shows that for all $k\geq n_0$ there is
 $x_\ell^k\in B_{\delta}(y_\ell)$ with $f^p_k(x_\ell^k)\in B_\delta(f^p(y_\ell))$ and
 $f_k^{p}(f_k^p(x_\ell^k))=f_k^p(x_\ell^k)$.

 For $j=1,\ldots,M$, we define $(a_j^k,b_j^k)$ as follows:
 if $\tilde a_j=y_\ell$ (for some $\ell$) is a surviving endpoint, we set
 $a_j^k$ to be the right-most point in
 $\mc O(x_\ell^k)\cap B_{\eps_0}(\tilde a_j)$
 and $b_j^k$ the left-most point in $\left(\mc O(x_\ell^k)\cup \{\tilde b_j\} \right)\cap B_{\eps_0}(\tilde b_j)$.
 If $\tilde a_j$ is not a surviving endpoint, then $\tilde b_j$ is necessarily surviving and we proceed similarly.
 Note that $(a_j^k,b_j^k)\in \Bcal_{f_k}$ and $d(a^k_j,\tilde a_j),d(b^k_j,\tilde b_j)<\eps_0\leq \eps/2$
 $(j=1\ldots,N$)
 and hence $\Bcal_f\ssq \bigcup_{j=1}^\ell B_{3\eps}((a_j^k,b_j^k))$.
 Since $\eps>0$ can be chosen arbitrarily small, this proves the desired statement.
\end{proof}

\subsection{Critical steps}\label{ss. critical steps}
Throughout this section, we consider continuous self-maps $f$ on $\I$
which are piecewise uniformly expanding (for the relation to piecewise monotone maps, see 
Remark~\ref{r.piecewise monotone from uniformly expanding}).
Recall that $f$ is referred to as \emph{piecewise uniformly expanding}, if there are finitely many
intervals $I_1,\ldots, I_n$ with $\I\ssq \bigcup_{\ell=1}^n I_\ell$
such that $f$ is uniformly expanding on each such interval, that is,
there is $\lambda>1$ such that $d(f(x),f(y))>\lambda \cdot d(x,y)$ whenever there is $\ell$ with
$x,y\in I_\ell$.
Given these intervals are maximal, the corresponding boundary points which do not
lie in $\partial \I$ coincide with the critical points of $f$.

Recall that a double point $(a,b)\in\Bcal_f$ is referred to as periodic, if both
$a$ and $b$ are periodic.
Our goal is to take a close look at periodic corner points (where $\mc O(a)=\mc O(b)$)
with $\Cri(f)\cap\Ocal(a)\neq\emptyset$.
We call this kind of periodic corner points (and hence steps, according to the previous section)
\emph{critical}.
Given a critical step $(a,b)$ of period $p$, we say that $f$ is
\emph{positive at $a$} whenever the image under $f^p$
of arbitrary small closed segments containing $a$
 is given by $[f^p(a),c]$ where $\I\ni c\neq f^p(a)=a$.
Clearly, if $f$ is not positive at $a$, then the image under
$f^p$ of an arbitrary small enough closed interval is given by $[c,f^p(a)]$ for some
$c\neq f^p(a)=a$ in $\I$.
In this situation, we say that $f$ is \emph{negative at $a$}.
For a periodic step $(a,b)\in \Bcal_f$ we say that $f$ is \emph{positive from $a$ to $b$}
if $f$ preserves orientation from $a$ to $b$ or if for some small enough
segment $J$ containing $a$ in its interior, we have $f^{n_{a,b}}(J)=[b,c]$ where
$\I\ni c\neq f^{n_{a,b}}(a)=b$.
In the complementary situation, we have that $f$ either reverses orientation from
$a$ to $b$ or we have that for an arbitrary small enough segment $J$ containing $a$ in its interior it holds
$f^{n_{a,b}}(J)=[c,b]$ for some $c\neq f^{n_{a,b}}(a)=b$ in $\I$.
In either case we say that $f$ \emph{is negative from $a$ to $b$}.
The following statement shows that in several situations $\Bcal_f$ detects
the periodicity of critical points explicitly.

\begin{lemma}\label{l.critpercor}
	Let $(a,b)\in\Bcal_f$ be a critical step of a transitive piecewise
	uniformly expanding map $f:\I\to\I$.
	If $f$ is negative at $a$ and positive at $b$, $(a,b)$ is accumulated
	from below.
\end{lemma}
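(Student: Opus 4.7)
To show that $(a,b)$ is accumulated from below, it suffices to produce, for every sufficiently small $\delta>0$, a point $(a',b')\in\Bcal_f$ with $a<a'<b'<b$ and $\max\{a'-a,\,b-b'\}<\delta$; since $a'>a$ and $b'<b$ automatically force $(a',b')\notin H_{(a,b)}\cup V_{(a,b)}$, this suffices. The plan is to construct a periodic orbit $\mc O'$ of $f$, distinct from $\mc O(a)$, possessing two $\I$-consecutive orbit points $a'\in(a,a+\delta)$ and $b'\in(b-\delta,b)$; by Proposition~\ref{prop: stairs and periodic orbits}, $(a',b')$ is then a step of $\Bcal_f$ with the desired properties.

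Writing $\hat f:=f^p$, the hypothesis supplies two useful one-sided expanding branches. Being negative at $a$ means that $\hat f$ has a local maximum at $a$ with $\hat f(a)=a$, so for some small $\epsilon>0$ the right branch $\hat f|_{[a,a+\epsilon]}$ is strictly decreasing, $\lambda^p$-expanding, and surjects onto an interval of the form $[a-r,a]$ with $r>0$; symmetrically, positivity at $b$ yields a strictly decreasing, $\lambda^p$-expanding left branch $\hat f|_{[b-\epsilon,b]}$ surjecting onto some $[b,b+s]$. Combining these branches with the transitivity of $\hat f$ and a shadowing argument valid away from $\Cri(f)$, for each sufficiently large integer $N$ we construct a periodic $\hat f$-pseudo-orbit that starts in $(a,a+\delta)$, shadows $\mc O(a)$ for $N$ steps (using the local expansion along the non-critical orbit points), lands in $(b-\delta,b)$ via the left branch at $b$, and returns to close the loop. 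Because $f$ is piecewise uniformly expanding away from $\Cri(f)$, this pseudo-orbit is realized by a genuine periodic $\hat f$-point $a'_N\in(a,a+\delta)$ whose $N$-th iterate $b'_N:=\hat f^N(a'_N)$ lies in $(b-\delta,b)$.

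The corresponding $f$-orbit of $a'_N$, of period a multiple of $p$, then contains both $a'_N$ and $b'_N$; all remaining orbit points shadow points of $\mc O(a)\setminus\{a,b\}$, which lies at positive distance $\eta_0>0$ from $[a,b]$. Hence for $\delta<\eta_0/2$, no other orbit point of $\mc O_f(a'_N)$ can lie in $(a'_N,b'_N)$, so $a'_N$ and $b'_N$ are $\I$-consecutive in $\mc O_f(a'_N)$, giving the required step of $\Bcal_f$ inside $(a,b)$ arbitrarily close to $(a,b)$. The principal technical obstacle will be the shadowing step at the critical passage of $\mc O(a)$: since uniform expansion of $f$ fails at critical points, one must argue that the fold at the critical orbit point merely routes the pseudo-orbit from one branch onto another with a bounded, controllable displacement, which is achievable by piecewise monotonicity of $f$ combined with the uniform expansion on each piece away from $\Cri(f)$.
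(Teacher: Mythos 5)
Your approach is genuinely different from the paper's, and it leaves a real gap. The paper does not build a nearby periodic orbit by shadowing at all. Instead it takes a transitive point $z\in I:=[a,a+\eps]$, lets $n$ be the minimal time with $f^n(z)$ in a small subinterval $(a+\eps,b')\ssq(a,b)$, and uses that $f^n(I)$ is connected, contains $f^n(z)\in(a,b)$ and contains $f^n(a)\in\mathcal O(a)\ssq\I\setminus(a,b)$, hence must contain $a$ or $b$. Negativity at $a$ (resp.\ positivity at $b$) is then exactly what produces a \emph{second} point $x_0\in I\setminus\{a\}$ with $f^n(x_0)\in\mathcal O(a)$ even when $n$ is a multiple of $p$; minimality of $n$ forces $f^j(x_0)\notin(a,b)$ for $j\ne n_{a,b}$, and so $(x_0,f^{n_{a,b}}(x_0))$ is a preperiodic double point strictly inside $(a,b)$ and arbitrarily close to $(a,b)$. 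No shadowing and no periodic closing are required.

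The gap in your argument is precisely the step you flag as the "principal technical obstacle". Since $(a,b)$ is a \emph{critical} step, $\mathcal O(a)$ passes through a turning point, and that is exactly where shadowing for expanding maps fails: on the far side of a fold the local map is not onto a two-sided neighbourhood of the image, so a prescribed pseudo-orbit need not have a true orbit shadowing it, and the pseudo-orbit cannot in general be closed up to a periodic point. Asserting that the fold "merely routes the pseudo-orbit from one branch onto another with bounded displacement" does not resolve this — it is the heart of the matter and would need a genuine argument (which in effect would have to rediscover the case analysis the paper does, using negativity at $a$ and positivity at $b$ to ensure the fold sends the relevant branch to the correct side). A second, independent problem is the claim that all other points of $\mathcal O_f(a'_N)$ "shadow points of $\mathcal O(a)\setminus\{a,b\}$", which lie at positive distance from $[a,b]$: this is not true, because $a,b\in\mathcal O(a)$ and the $f$-orbit of $a'_N$ must pass near $a$ and near $b$ at many intermediate times; whether those passages fall inside or outside $(a,b)$ has to be controlled, and your sketch does not do so. For both reasons, the consecutiveness of $a'_N$ and $b'_N$ in $\mathcal O_f(a'_N)$, which you need to invoke the step correspondence, is unproven.
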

\begin{proof}
We first show that for every $\eps>0$ we have that
there is $n\in\N$ and two distinct points $x,y\in [a,a+\eps]=I$
with $f^n(x),f^n(y)\in\mathcal{O}(a)$ (note that possibly $f^n(x)=f^n(y)=a$).
To that end, we may assume without loss of generality that
$\eps>0$ is small enough to guarantee that $(a+\eps,b')$ is a non-empty
subinterval of $(a,b)$, where $b'$ is the left-most point of $(a,b)\cap (f^{n_{a,b}}(I)\cup\{b\})$.
As $f$ is transitive, there clearly exists a transitive point $z\in I$.
In particular, there must be $n\geq1$ such that $f^n(z)\in (a+\eps,b')$.
Note that this necessarily gives
$\{a\}\ssq f^{n}(I)\cap\mathcal{O}(a)$ or
$\{b\}\ssq f^{n}(I)\cap\mathcal{O}(a)$.
In the first case, if $n$ is not a multiple of the minimal period $p$ of $a$, we are done since
$f^n(a)$ obviously lies in $\mc O(a)$ which would hence give two points in $\mc O(a)$.
If, however, $n$ is a multiple of $p$, we must have another point besides $a$ whose $n$-th image coincides with $a$
as $f$ is assumed to be negative at $a$.
The second case can be dealt with similarly.

Now, assume $n\in \N$ to be minimal with the discussed property and observe that the above argument also gives
\begin{align}\label{eq: no intersection with the hole}
 f^j(I)\cap (a,b)=\emptyset \qquad \text{ for } j = 1,\ldots, n_{a,b}-1,n_{a,b}+1,\ldots, n-1.
\end{align}
By definition of $n$, we hence have
$x_0\in I\setminus\{a\}$ with $f^{n}(x_0)\in\mathcal{O}(a)$ and such that
$f^j(x_0)\in\I\setminus (a,b)$ for every $j=1,\ldots, n_{a,b}-1,n_{a,b}+1,\ldots, n-1$,
due to \eqref{eq: no intersection with the hole}.
Clearly, given $\delta>0$ we can further guarantee that $y_0=f^{n_{a,b}}(x_0)$ is
$\delta$-close to $b$
by choosing the above $\eps$ small enough.
Then $(x_0,y_0)$ is a double point $2\delta$-close to $(a,b)$ and below $(a,b)$.
Since $\delta>0$ was arbitrary, this proves the statement.
\end{proof}

\begin{cor}\label{c.cri}
	Let $f:\I\to\I$ be continuous, transitive and piecewise uniformly expanding.
	Then $\Bcal_f$ has a step $(a,b)$ accumulated from below if and only if
	$(a,b)$ is a critical step and $f$ is negative at $a$ and positive at $b$.
\end{cor}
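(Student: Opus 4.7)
The backward direction is the content of Lemma~\ref{l.critpercor}. For the forward direction, my plan is to argue by contrapositive and split into two cases.

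First, suppose $(a,b)$ is a step with $\mc O(a)\cap\Cri(f)=\emptyset$. Then $\mc O(a)$ avoids the finitely many critical points of $f$ and therefore lies in the open set on which $f$ is uniformly $\lambda$-expanding; so $\mc O(a)$ is hyperbolic, and Lemma~\ref{l.periodiccorner} would yield that $(a,b)$ is either isolated or isolated from below. Neither situation is accumulated from below.

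The substantive task is the critical case. By the symmetry exchanging the roles of $a$ and $b$ and swapping ``positive'' with ``negative'' (obtained by reflecting $\I$), it suffices to treat the case that $(a,b)$ is a critical step with $f$ positive at $a$, and the aim is then to show that $(a,b)$ is isolated from below. I would consider any $(a',b')\in\Bcal_f$ with $a'\in(a,a+\delta)$ and $b'\in(b-\delta,b)$ for $\delta>0$ small. Positive at $a$ means $f^p$ has a V-shape at $a$ with both one-sided branches monotone and expanding at rate at least $\lambda^p>1$. In particular, $f^p(a')>a'$ with $f^p(a')$ close to $a$, so $f^p(a')\in(a',b')$, giving $a'\notin\Scal_f(a',b')$; hence $b'$ must survive in order for $(a',b')$ to lie in $\Bcal_f$.

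The hard part will be ruling out survival of $b'$. Let $n_0\in\{1,\ldots,p-1\}$ satisfy $f^{n_0}(b)=a$, so that by continuity $f^{n_0}(b')$ is close to $a$. I would then iterate $f^p$ from $f^{n_0}(b')$: the V-shape at $a$ forces all iterates $f^{n_0+kp}(b')$ for $k\geq 1$ to lie above $a$, with their distance from $a$ growing geometrically by a factor at least $\lambda^p$ per step. For the smallest $k$ with $|f^{n_0+kp}(b')-a|>a'-a$, this distance is at most $\mu_R(a'-a)$, where $\mu_R\geq\lambda^p$ is the slope of the right branch of the V; provided $a'-a<(b-a)/\mu_R$, which holds for any accumulation from below since $a'-a\to 0$, this gives $\mu_R(a'-a)<b'-a$ and hence $f^{n_0+kp}(b')\in(a',b')$, so $b'\notin\Scal_f(a',b')$ and $(a',b')\notin\Bcal_f$. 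The delicate step, which I expect to be the main obstacle, is ensuring that all intermediate iterates $f^{n_0+jp}(b')$ for $j\leq k$ remain within the monotone piece of $f^p$ to the right of $a$ where the V-shape linearization is valid; this will need to be arranged by choosing $(a',b')$ sufficiently close to $(a,b)$ so that the entire finite chain of geometrically spaced iterates fits inside the relevant neighborhood.
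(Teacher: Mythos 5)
Your proof is correct and follows the same strategy as the paper: the backward direction is Lemma~\ref{l.critpercor}, and the forward direction proceeds by contrapositive, disposing of the non-critical (hence hyperbolic) case via Lemma~\ref{l.periodiccorner} and then showing in the remaining critical cases that both $a'$ and $b'$ leave the hole $(a',b')$. The paper explicitly treats ``negative at $a$ and negative at $b$'' and waves at the other cases, while you treat ``positive at $a$'' and reduce ``negative--negative'' by the reflection symmetry; these are interchangeable. One small caveat: since $f$ is only assumed piecewise uniformly expanding and not piecewise linear, ``the slope $\mu_R$ of the right branch'' is not a well-defined quantity, and the expansion hypothesis gives a lower bound on stretching, not an upper one; the fact you actually need --- that the first iterate of $f^{n_0}(b')$ under $f^p$ to exceed $a'$ still lands below $b'$ --- follows simply from continuity of $f^p$ at the fixed point $a$, since $f^p([a,a'])$ shrinks to $\{a\}$ as $a'\to a$, and this also resolves the ``delicate step'' you flag about staying inside the monotone piece.
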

\begin{proof}
	The ``if''-part is given by the previous statement.
	For the other direction consider a periodic corner point $(a,b)\in\Bcal_f$
	accumulated from below.
	If it is hyperbolic, then it cannot be accumulated from below due to
	Lemma~\ref{l.periodiccorner}.
	Hence, it must be a critical periodic corner point.
	For a contradiction, suppose $f$ is negative at $a$ and negative at $b$ (the other cases can be dealt with similarly)
	and assume there is $(a',b')\in \Bcal_f$ with $a<a'<a+\delta$ and $b-\delta<b'<b$ for arbitrarily small $\delta>0$.
	We denote by $p$ the minimal period of $a$ and $b$.
	For small enough $\delta>0$, the negativity at $b$ implies that
	$f^p(b')\in (a',b')$ and that there is $\ell\in \N$ such that
	$f^{n_{a,b}+\ell\cdot p}(a')\in (a',b')$ since $f$ is piecewise uniformly expanding.
	For such $\delta$ we have $(a,a+\delta)\times (b-\delta,b)\ssq \Bcal_f^c$
	which finishes the proof.
\end{proof}

If $\I=\T$, we clearly have that if $b$ is the second coordinate of a step, then
it also is the first coordinate of the neighboring step of the associated stair.
In this way, we obtain the following statement
where the term \emph{negative slope region} of a piecewise uniformly expanding map refers to
a maximal interval in the complement of the critical points on which the map
reverses orientation.
The straightforward proof is left to the reader.

\begin{cor}\label{c.cricirc}
	Suppose $f:\T\to\T$ is a transitive piecewise uniformly expanding map.
	Then there is a step $(a,b)$ in $\Bcal_f$ which is accumulated from below if
	and only if $f$ has a critical periodic point which meets a negative
	slope region or it has a critical periodic point with an orbit supporting both a
	local maximum and a local minimum of $f$.
\end{cor}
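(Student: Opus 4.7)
The plan is to reduce to Corollary \ref{c.cri} and then analyse the sign pattern around a critical periodic orbit.

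By Corollary \ref{c.cri}, a step $(a,b)$ of $\Bcal_f$ is accumulated from below iff it is critical and $f$ is negative at $a$ and positive at $b$. In the circle case, every stair of $\Bcal_f$ arises from a single periodic orbit $\{y_1,\ldots,y_p\}$ in cyclic spatial order on $\T$, and its steps are the consecutive spatial pairs $(y_i,y_{i+1})$. Assigning to each $y_i$ a sign $+$ or $-$ according to whether $f$ is positive or negative at $y_i$, the existence of a step accumulated from below becomes equivalent to the existence of a cyclic spatial transition $-\to +$ in the sign sequence. Since in any cyclic sequence containing both signs the number of $-\to +$ transitions equals the number of $+\to -$ transitions and is at least one, this reduces the corollary to showing that the sign sequence along the orbit is non-constant exactly under condition (A) or (B).

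To compute the sign at each $y_i$, I pass to the dynamical ordering $x_0,x_1,\ldots,x_{p-1}$ of the orbit and decompose $f^p$ near $x_i$ as an alternating composition of monotone pieces and folds at the critical points encountered during a full cycle. A fold at a local maximum sends a small two-sided neighbourhood to a one-sided image below the image of the critical point, a fold at a local minimum to a one-sided image above, and the intervening monotone pieces transfer this one-sidedness through the product of the orientation signs of $f$ along the non-critical orbit points. A direct case analysis based on this decomposition yields: if the orbit contains critical points of only one type and no point in a negative slope region, every fold is of the same type and every monotone piece is orientation preserving, so the image always ends on the same side of $x_i$ and the sign sequence is constant; if the orbit meets a negative slope region, then crossing the reversed piece flips the sign at least once; and if the orbit supports both a local maximum and a local minimum, then the two opposite folds push the image to opposite sides and the sign sequence is non-constant even in the absence of orientation reversals. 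Thus constancy of the sign sequence is equivalent to ``neither (A) nor (B)''.

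The main technical hurdle will be the careful bookkeeping when several critical points lie in the same orbit and $x_i$ itself is critical, since one then has to track both an initial fold and the subsequent composition of monotone and folded pieces through the full period. Once the sign formula is justified, the iff statement of the corollary follows directly from the cyclic transition argument of the first step.
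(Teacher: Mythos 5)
Your proposal is correct and matches the route the paper intends: the paper itself reduces the circle case to Corollary~\ref{c.cri} together with the observation that consecutive steps of a stair share a link, so each point of a critical periodic orbit carries a well-defined sign and one only needs a cyclic $-$ to $+$ transition, i.e.\ a non-constant sign sequence. The bookkeeping you flag as a hurdle (several critical points, $x_i$ itself critical) collapses to a one-step recursion in dynamical order: if $s_i$ denotes the sign at $x_i$, then $s_{i+1}=\tau_i\,s_i$ when $x_i$ is non-critical with local orientation $\tau_i\in\{\pm1\}$, while $s_{i+1}$ is forced to the fold type ($+$ for a local minimum, $-$ for a local maximum) when $x_i$ is critical, since the last map applied in $f^p$ near $x_{i+1}$ is $f$ at $x_i$ acting on a one-sided neighbourhood of $x_i$; your three cases then read off immediately from this recursion.
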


It remains to study the case when $(a,b)\in \Bcal_f$ is a critical periodic corner
point not fulfilling the conditions of Lemma~\ref{l.critpercor}.
In this case, we obtain the following

\begin{lemma}\label{l.critpercor2}
	Let $f\: \I\to\I$ be continuous, transitive and piecewise uniformly expanding.
	Suppose $(a,b)\in\mathcal{B}_f$ is a critical periodic corner point such that
	$f$ is positive at $a$.
	Then there is a neighborhood $U\subseteq\Delta$ of $(a,b)$ and a sequence of maps
	$(f_n)_{n\in\N}$ converging uniformly to $f$ so that $\mathcal{B}_{f_n}\cap U=\emptyset$
	for every $n\in\N$.
	The same holds true if $f$ is negative at $b$.
\end{lemma}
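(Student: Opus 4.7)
The strategy is to construct perturbations $f_n\to f$ that lift the critical value $f^p(a) = a$ to $f_n^p(a) > a$, destroying the periodic orbit which sustains the step $(a,b)$; combined with the uniform expansion of $f$, this forces every orbit starting in a small fixed neighborhood of $(a,b)$ to be eventually trapped in its own hole. Since $(a,b)$ is critical, $\mc O(a)$ contains a critical point $c_j = f^j(a)$ of $f$, which for clarity I assume is unique on the orbit. Pick a smooth bump $\psi$ supported in a narrow neighborhood of $c_j$ disjoint from the rest of $\mc O(a)$ and from the other critical points of $f$, with $\psi(c_j) = 1$ and $\psi'(c_j) = 0$, and set $f_n := f + \eta_n\psi$ with $\eta_n\to 0$. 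Tracking the orbit of $a$ under $f_n$ through the support of $\psi$ (which it crosses exactly once per period) yields the linearization $f_n^p(a) - a \sim (f^{p-j-1})'(c_{j+1}) \cdot \eta_n$; the assumption ``$f$ positive at $a$'' pins down the sign of $(f^{p-j-1})'(c_{j+1})$ relative to the extremum type of $f$ at $c_j$, allowing one to choose the sign of $\eta_n$ so that $\delta_n := f_n^p(a) - a > 0$.

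The vanishing $\psi'(c_j) = 0$ ensures the minimizer $a_n^\ast$ of $f_n^p$ near $a$ satisfies $|a_n^\ast - a| = O(\eta_n^2)$ while $\delta_n = O(\eta_n)$. Combined with the piecewise expansion of $f_n$ by some $\lambda_n > 1$ close to $\lambda$ (inherited from $f$ for small $\eta_n$), this yields the key bound
\[
 f_n^p(x) \geq a + \delta_n + \lambda_n^p\, |x - a_n^\ast|
\]
in a fixed neighborhood of $a$ independent of $n$, and in particular $f_n^p(x) > x$ there. Writing $L := \textrm{Lip}(f)$ and $n_{b,a}$ for the least $m$ with $f^m(b) = a$, fix $\eps > 0$ small enough that $a + L^p\eps < b - \eps$, and set $U := B_{\eps/(2L^{n_{b,a}})}((a,b)) \cap \Delta$.

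Given $(a', b') \in U$, we verify both $a' \notin \Scal_{f_n}(a', b')$ and $b' \notin \Scal_{f_n}(a', b')$. For the first, the iterates $z_k := f_n^{kp}(a')$ strictly increase while they remain in $(a-\eps, a+\eps)$ (because $f_n^p(x) > x$ there) and cannot have a limit there (as $f_n^p$ has no fixed point in this neighborhood), so they exit to the right at some first time $k^\ast$; the bounds $z_{k^\ast} \leq a + L^p\eps < b'$ and $z_{k^\ast} \geq a + \eps > a'$ then give $z_{k^\ast} \in (a', b')$. For the second, $a'' := f_n^{n_{b,a}}(b')$ lies in $(a-\eps, a+\eps)$ (the factor $\eps/(2L^{n_{b,a}})$ in the definition of $U$ absorbs the Lipschitz blow-up across $n_{b,a}$ iterates), and the same argument starting from $a''$ yields an iterate $f_n^{n_{b,a}+k^\ast p}(b') \in (a', b')$. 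The ``$f$ negative at $b$'' case is symmetric, with a perturbation lowering the local maximum of $f_n^p$ at $b$ and orbits near $b$ descending into the hole.

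The main obstacle is the sign bookkeeping in the linearization when $\mc O(a)$ contains more than one critical point of $f$: the contributions from perturbing different critical points may interfere, and the factor $(f^{p-j-1})'(c_{j+1})$ involves one-sided derivatives past intermediate critical points. This is handled by perturbing at all critical points of $\mc O(a)$ simultaneously, with signs aligned so that the net linear response at $a$ is strictly positive, using the piecewise expansion bound $\lambda$ as a substitute for $f'$ wherever needed.
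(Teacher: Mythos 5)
Your strategy --- destroy the fixed point at the bottom of the fold of $f^p$ near $a$ so that orbits escape into the hole --- is the same underlying idea as the paper's, but the implementation has genuine gaps that the paper's construction is specifically designed to avoid.

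Your perturbation $f_n = f + \eta_n\psi$, with $\psi$ a bump at a critical point $c_j \in \Ocal(a)$, forces you to propagate the perturbation through $p-j-1$ further iterates to read off its sign at $a$. You do this via a linearization $f_n^p(a)-a \sim (f^{p-j-1})'(c_{j+1})\cdot\eta_n$, but the hypothesis is only \emph{piecewise uniformly expanding} (a reverse-Lipschitz condition), not differentiability: $(f^{p-j-1})'$ need not exist, and if $\Ocal(a)$ meets more than one critical point, the chain passes through further kinks where even one-sided slopes must be juggled. You acknowledge this in your final paragraph but resolve it only by fiat (``signs aligned so that the net linear response at $a$ is strictly positive''); this is precisely the point where the proof is incomplete. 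Relatedly, the quantitative claims $|a_n^\ast - a| = O(\eta_n^2)$ and $\delta_n = O(\eta_n)$ are Taylor-type estimates that presuppose smoothness not available here, and the key inequality $f_n^p(x) \geq a + \delta_n + \lambda_n^p|x - a_n^\ast|$ is asserted rather than derived (one needs $a_n^\ast$ to be the unique fold of $f_n^p$ in the neighborhood, which in turn requires controlling that the bump does not create new critical points of $f_n^p$ near $a$).

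The paper sidesteps all of this by perturbing \emph{near $a$} rather than near the critical point, and by a \emph{compositional} rather than additive perturbation. It sets $f_n := g_n \circ f$ where $g_n$ is an orientation-preserving homeomorphism equal to the identity outside a small interval around $a$ and equal to $x\mapsto x + 1/n$ on $(a-\eps_1,a+\delta)$. The crux: because $f$ is positive at $a$, $f^p$ maps the two-sided interval $I=(a-\eps_1,a+\eps_2)$ onto the one-sided $(a,a+\delta)$, and because the intermediate images $f^1(I),\ldots,f^{p-1}(I)$ miss the support of $g_n$, one gets the exact identity $(g_n\circ f)^p = f^p + 1/n$ on $I$. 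There is no derivative to compute, no sign to track through the orbit, and no issue with multiple critical points on $\Ocal(a)$; the shift $+1/n$ manifestly pushes $f^p$ strictly above the diagonal, and uniform expansion of $f^p$ on $I^+$ forces the orbit of any point in $I$ to reach $[a+\eps_2,a+\eps_2+\delta)$, hence to enter the hole $(a',b')$. Moreover, since $g_n$ is a homeomorphism, $g_n\circ f$ has the same critical set as $f$. If you want to salvage your approach, you would need to replace the derivative computations with one-sided monotonicity arguments and give a genuine argument for the multi-critical-point case; but switching to a compositional shift supported near $a$ is both shorter and robust to the lack of smoothness in the hypotheses.
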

\begin{proof}
Let $f$ be positive at $a$ (the proof of the other case works similarly)
and let $p$ denote the minimal period of $a$.
As $f$ is piecewise uniformly expanding and positive at $a$, there are $\eps_1,\eps_2,\delta>0$
such that for $I=(a-\eps_1,a+\eps_2)$,
$I^-=(a-\varepsilon_1,a)$ and $I^+=(a,a+\varepsilon_2)$

\begin{enumerate}
	\item $f^p(I^-)=f^p(I^+)=(a,a+\delta)$,\label{l.proof1}
	\item $f^p$ is uniformly expanding on $I^+$,\label{l.proof2}
	\item $f^j(I)\cap [a-\varepsilon_1-\delta,a+\varepsilon_2+\delta]=\emptyset$ \label{l.proof3}
		for $j=1,\ldots,p-1$.
\end{enumerate}

Observe the following: given $t\in(0,\varepsilon_2)$,
for every $x\in I$ there exit $m_x\in\N$ such that
$(f^{p}+t)^{m_x}(x)\in [a+\varepsilon_2,a+\varepsilon_2+\delta)=J$.  

To see this, note that for $x\in I$ with $y=(f^{p}+t)(x)\not\in J$ we have $y\in I^+$, due to \eqref{l.proof1}.
Since $f^p+t$ is uniformly expanding on $I^+$ (by \eqref{l.proof2}), the existence of the above $m_x$ follows.

Now, for big enough $n\in\N$, there is an orientation preserving homeomorphism $g_n$
with $g_n=\textrm{Id}$ on
$\I\setminus [a-\varepsilon_1-\delta,a+\varepsilon_2+\delta]$, $g_n(x)=x+1/n$ on
$(a-\eps_1,a+\delta)$ and $d_\infty(g_n,\textrm{Id})=1/n$.
Set $f_n=g_n\circ f$.
On the interval $I$, we have
$(g_n\circ f)^{p}=(f^p+1/n)$, due to \eqref{l.proof1} and \eqref{l.proof3}.
If $1/n<\eps_2$, the above observation implies that for every $x\in I$
we have $m_x\in\N$ such that $(g_n\circ f)^{p\cdot m_x}(x)=(f^{p}+1/n)^{m_x}(x)\in J$.

Consider now a neighborhood $U$ of $(a,b)\in\Delta$ such that for $(a',b')\in U$
we have $a'\in I,\ (a',b')\supset J, \mbox{ and }f^{n_{b,a}}(b')\in I$.
Then, given $(a',b')\in U$ we have for large enough $n$ that
$(g_n\circ f)^{p\cdot m_{a'}}(a')\in (a',b')$ and $(g_n\circ f)^{p\cdot m_{z}}(z)\in (a',b')$ where $z=f^{n_{b,a}}(b')$.
Hence, $U\subseteq \Delta\setminus \mathcal{B}_{g_n\circ f}$ for big enough $n$ which
proves the statement.
\end{proof}

For the next statement, we consider the space of continuous self-maps on $\I$ equipped
with the supremum metric $d_\infty$ and the space of non-empty closed subsets
of $\Delta$ endowed with the Hausdorff metric.

\begin{cor}\label{c.critpercor2}
	Let $f:\I\to\I$ be a continuous transitive piecewise uniformly expanding map and
	$(a,b)\in\mathcal{B}_f$ a critical periodic corner point such that
	$f$ is positive at $a$ or negative at $b$.
	Then the map $g\mapsto \Bcal_g$ is not continuous at $f$.
\end{cor}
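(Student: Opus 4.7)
The plan is to derive the corollary almost directly from Lemma~\ref{l.critpercor2}, which already does the dynamical work by constructing the perturbations; all that remains is to package the conclusion in terms of Hausdorff convergence.

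First I would invoke Lemma~\ref{l.critpercor2} applied to the critical periodic corner point $(a,b)$ satisfying the hypotheses (positive at $a$ or negative at $b$). This yields an open neighborhood $U\ssq\Delta$ of $(a,b)$ and a sequence $(f_n)_{n\in\N}$ of continuous self-maps of $\I$ with $d_\infty(f_n,f)\to 0$ and $\Bcal_{f_n}\cap U=\emptyset$ for every $n\in\N$. Since $U$ is open and $(a,b)\in U$, fix some $r>0$ with $B_r((a,b))\ssq U$.

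Next I would compare $\Bcal_f$ and $\Bcal_{f_n}$ in the Hausdorff metric on non-empty closed subsets of $\Delta$. By hypothesis $(a,b)\in \Bcal_f$, while $\Bcal_{f_n}\cap B_r((a,b))=\emptyset$, so the distance from $(a,b)$ to $\Bcal_{f_n}$ is at least $r$ for every $n$. Consequently
\[
    d_H(\Bcal_f,\Bcal_{f_n})\;\geq\;\inf_{y\in\Bcal_{f_n}} d\bigl((a,b),y\bigr)\;\geq\; r
\]
for every $n\in\N$. Hence $\Bcal_{f_n}$ cannot converge to $\Bcal_f$ in the Hausdorff metric. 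Since $f_n\to f$ uniformly, this shows that $g\mapsto\Bcal_g$ fails to be continuous at $f$, which is exactly the statement of the corollary.

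Essentially there is no real obstacle here: the hard work is already contained in Lemma~\ref{l.critpercor2}, where the explicit perturbation $g_n\circ f$ was engineered so as to erase the piece of $\Bcal_f$ near the critical corner $(a,b)$. The only thing to be mindful of is that the Hausdorff distance lower bound only needs one direction of the $\sup/\inf$ definition (the one that witnesses the missing point $(a,b)$), and that $\Bcal_{f_n}$ is indeed non-empty for large $n$, which follows from Theorem~\ref{t.main A}\eqref{thmA: 1} applied to $f_n$ once we note that $f_n$ remains transitive and non-minimal under small enough perturbations — though in fact, even without non-emptiness of $\Bcal_{f_n}$, the failure of convergence to the non-empty set $\Bcal_f$ still prevents continuity of $g\mapsto\Bcal_g$ at $f$.
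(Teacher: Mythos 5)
Your proof is correct and follows precisely the route the paper intends: the corollary is stated without an explicit proof because it is meant to be an immediate consequence of Lemma~\ref{l.critpercor2}, and your packaging via the one-sided lower bound on the Hausdorff distance at the witness point $(a,b)$ is exactly how one makes that step precise.
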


Summing-up, we obtain the following statement concerning the sensitivity of the
bifurcation set to different dynamical behavior of the critical points.

\begin{thm}\label{t.bifsetandcrit}
	Assume that $f:\I\to\I$ is a continuous transitive piecewise uniformly expanding map.
	The following holds.
	\begin{enumerate}
		\item If $\Per(f)\cap\Cri(f)=\emptyset$, then every step is isolated from below.
			Further, in case $\Cri(f)$ is empty or only consists of transitive
			points, we get that $f$ is a continuity point of the map $g\mapsto\Bcal_g$.
		\item If $\Per(f)\cap\Cri(f)\neq\emptyset$, then there is at least one step
			accumulated from below or $f$ is a discontinuity point of $g\mapsto\Bcal_g$.
	\end{enumerate}
\end{thm}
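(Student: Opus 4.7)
The plan is to derive Theorem \ref{t.bifsetandcrit} as a synthesis of the structural results of this section: Lemma \ref{l.periodiccorner} on hyperbolic steps, the continuity theorem proved earlier in this section (with hypothesis ``every critical point is transitive and every transitive invariant set disjoint from $\Cri(f)$ is hyperbolic''), and the critical-step Corollaries \ref{c.cri} and \ref{c.critpercor2}. Both parts of the statement follow once the dichotomy between hyperbolic and critical steps is made explicit via the assumption on $\Per(f)\cap\Cri(f)$.

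For part (1), assume $\Per(f)\cap\Cri(f)=\emptyset$. Each step of $\Bcal_f$ corresponds via Proposition \ref{prop: stairs and periodic orbits} to a periodic orbit, and by hypothesis this orbit is disjoint from the finite set $\Cri(f)$. Since $f$ is piecewise uniformly expanding, such an orbit lies entirely in the union of the interiors of the monotonicity intervals, and a sufficiently small open neighborhood of the orbit within this union witnesses its hyperbolicity. Lemma \ref{l.periodiccorner} then guarantees that the step is isolated from below in either of its two cases. For the continuity statement, if $\Cri(f)$ is empty or only contains transitive points, then the same hyperbolicity argument applies to every transitive invariant subset disjoint from $\Cri(f)$, so the hypotheses of the aforementioned continuity theorem are satisfied and $f$ is a continuity point of $g\mapsto\Bcal_g$.

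For part (2), pick $c\in\Per(f)\cap\Cri(f)$. Unless $\Ocal(c)$ is one of the (at most two) exceptional periodic orbits arising when $\I=[0,1]$ with fewer than two points in $\I\setminus\partial\I$, the orbit corresponds to a stair in $\Bcal_f$; select any step $(a,b)$ of that stair with $c\in\Ocal(a)=\Ocal(b)$, which is then a critical step in the sense of Subsection \ref{ss. critical steps}. A case split according to the one-sided behavior of $f$ at $a$ and $b$ finishes the proof: if $f$ is negative at $a$ and positive at $b$, Lemma \ref{l.critpercor} (equivalently Corollary \ref{c.cri}) produces accumulation of $(a,b)$ from below; otherwise $f$ is positive at $a$ or negative at $b$, and Corollary \ref{c.critpercor2} produces a discontinuity of $g\mapsto\Bcal_g$ at $f$.

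The only genuinely delicate point is to handle the interval-case boundary-orbit exception in part (2). Because the definition of a critical point places $\Cri(f)$ inside $\I\setminus\partial\I$, any periodic critical point $c$ lies in the interior of $\I$, and hence its periodic orbit always possesses at least one critical step with coordinates in $\I\setminus\partial\I$ to which the critical-step corollaries apply. Everything else is a direct appeal to the earlier results.
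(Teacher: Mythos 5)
Your argument for part (1) is correct and is exactly the compilation the paper intends: a step corresponds (via Proposition \ref{prop: stairs and periodic orbits}) to a periodic orbit, which by hypothesis avoids the finite set $\Cri(f)$, hence has positive distance to it; uniform expansion away from $\Cri(f)$ gives hyperbolicity, and Lemma \ref{l.periodiccorner} (both of its cases) yields isolation from below. The continuity statement then follows from the unnamed continuity theorem of the section once you check its two hypotheses, which you do correctly.

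Your argument for part (2) has the right skeleton (take a critical step, split on the sign behavior at its endpoints, use Lemma \ref{l.critpercor} or Corollary \ref{c.critpercor2}), but your final paragraph contains a false claim that papers over a genuine gap. You assert that because a periodic critical point $c$ lies in $\I\setminus\partial\I$, its orbit ``always possesses at least one critical step with coordinates in $\I\setminus\partial\I$.'' That is not true: a stair requires at least two adjacent orbit elements in $\I\setminus\partial\I$. If $c$ is a critical \emph{fixed} point, $\Ocal(c)=\{c\}$ has a single element and produces no step at all; if $\I=[0,1]$ and $\Ocal(c)=\{c,0\}$, $\{c,1\}$ or $\{0,c,1\}$, every adjacent pair meets $\partial\I$ and again there is no step. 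A critical fixed point is genuinely possible for a transitive piecewise uniformly expanding map (e.g.\ a local maximum $c$ with $f(c)=c$ and expanding branches on both sides), and the interval-boundary orbits are likewise not excluded by the hypotheses, so Lemma \ref{l.critpercor} and Corollary \ref{c.critpercor2} simply do not apply to these orbits as stated. Note that the paper itself acknowledges exactly this kind of obstruction in the footnote to Proposition \ref{prop: discontinuity of Ts at periodic parameters}, where $0\in\Ocal(c_{s_0})$ forces an explicit re-derivation of the perturbation argument rather than a citation of Lemma \ref{l.critpercor2}. To close the gap you would need either to rule out these exceptional orbits under the standing hypotheses, or to rerun the perturbation of Lemma \ref{l.critpercor2} directly at the critical periodic point (the proof of that lemma uses the periodicity and one-sided sign of $f^p$ near $a$, not the existence of the partner coordinate $b$ in the interior), showing that some vertical or horizontal segment of $\Bcal_f$ through the orbit of $c$ is destroyed by the perturbed maps.
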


\begin{rem}\label{r.piecewise monotone from uniformly expanding}
	According to a classical result of Parry \cite[Corollary 3]{Parry1966}, a transitive
	piecewise monotone map $f:[0,1] \to [0,1]$ is conjugate to a map of constant slope
	$\pm\beta$ where $\log\beta$ is the topological entropy of $f$.
	Further, it is well known that transitive continuous interval maps have positive
	entropy, see for instance \cite[Corollary 3.6]{BlockCoven1987}.
	Therefore, we can use Theorem \ref{t.bifsetandcrit} to infer Theorem~\ref{t.main B}
	for interval maps because topological properties of the bifurcation set are
	preserved under conjugation, see Section \ref{ss.invariants}.
	Concerning maps on the circle, \cite[Theorem C]{AlsedaMisiurewicz2015} provides
	the analogue statement of Parry's result.
	Moreover, for the fact that transitive non-minimal circle maps have positive
	entropy, see for example \cite[p. 267]{AlsedaLlibreMisiurewicz2000}.
\end{rem}

%%%%%%%%%%%%%%%%%%%%%%%%%%%%%%%%%%%%%%%%%%%%%%%%%%%%%%%%%%%%%%%%%%%%%%%%%%%%%%%%

\section{Proof of Theorem \ref{t.main C}}\label{s.restricted tent map}

In order to emphasize the applicability of our results,
we now make use of the statements and techniques of the previous sections
to describe the dependence of the bifurcation set on the parameter of a particular family of interval maps.
The specific family we are interested in is given by the collection of \emph{restricted tent maps}
$(T_s)_{s\in(1,2]}$ which are defined via

\begin{minipage}[m]{0.68\textwidth}
	\begin{align*}
		T_s(x):=
		\begin{cases}
			1+s(x-c_s) & \textnormal { if } x\in[0,c_s]\\
			1-s(x-c_s) & \textnormal { if } x\in(c_s,1]
		\end{cases}
		\quad\;\textnormal{with } c_s:=1-\frac{1}{s}.\\\\
	\end{align*}
\end{minipage}
\begin{minipage}[m]{0.32\textwidth}
	\definecolor{zzzzzz}{rgb}{0.6,0.6,0.6}
	\begin{tikzpicture}[line cap=round,line join=round,>=triangle 45,x=3.4cm,y=3.4cm]
	\draw[color=black] (-0.05,0) -- (1.05,0);
	\foreach \x in {,1}
	\draw[shift={(\x,0)},color=black] (0pt,2pt) -- (0pt,-2pt) node[below] {\footnotesize $\x$};
	\draw[color=black] (0,-0.05) -- (0,1.05);
	\foreach \y in {,0.5,1}
	\draw[shift={(0,\y)},color=black] (2pt,0pt) -- (-2pt,0pt) node[left] {\footnotesize $\y$};
	\draw [dash pattern=on 1pt off 1pt,color=zzzzzz] (1-1/1.6,1)-- (1-1/1.6,0);
	\draw[line width=0.5pt,color=zzzzzz, smooth,samples=100,domain=0:1] plot(\x,{(\x)});
	\draw[line width=1pt,smooth,samples=100,domain=0:1] plot(\x,{1-1.6*abs((\x)-1+1/1.6)});
	\draw (0.32,-0.02) node[anchor=north west] {$c_s$};
	\draw (0.45,0.98) node[anchor=north west] {$T_s$};
	\end{tikzpicture}
\end{minipage}
The above figure depicts $T_s$ for $s=1.6$.
It is not difficult to show that each $T_s$ is conjugate to the tent map $t_s:[0,1]\to [0,1]$
given by $x\mapsto s(1/2-\abs{x-1/2})$ restricted to the
interval $[t_s^2(1/2),t_s(1/2)]$ for $s\in(1,2]$.
Moreover, it is well known that $T_s$ is transitive if and only if $s\in[\sqrt{2},2]$,
see e.g.\ \cite[Lemma 8.1]{Shultz2007}.
Using the relation between $T_s$ and $t_s$, the following holds.

\begin{thm}[{\cite[Theorem 7]{BrucksMisiurewicz1996} and
	\cite[Lemma 5.5]{CovenKanYorke1988}}]\label{thm: transitivity and periodicity of c_s}\label{t.family}
	For almost every $s\in[\sqrt{2},2]$ we have that the critical point $c_s$
	is transitive. Further, $c_s$ is a periodic point for a dense set of parameters.
\end{thm}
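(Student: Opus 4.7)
The plan is to treat the two claims separately, as they depend on different features of the family: the almost-sure transitivity of $c_s$ is measure-theoretic, while the density of periodic $c_s$ is a topological statement following from kneading theory.

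For the transitivity claim, I would first pass to the standard tent map $t_s$ via the conjugacy mentioned right before the theorem, so that the critical point $c_s$ corresponds to $1/2$. For each $s\in(1,2]$, a Lasota-Yorke type argument yields an ergodic absolutely continuous invariant probability measure $\mu_s$ supported on the invariant interval; by Birkhoff, Lebesgue-a.e.\ point has $\mu_s$-typical, hence topologically dense, orbit. The task then reduces to showing that the specific point $1/2$ has this property for a.e.\ $s$. My approach would be a parameter-exclusion argument: for each open interval $J$ in the invariant set and each $n\in\N$, estimate the Lebesgue measure of $\{s\in[\sqrt{2},2]:t_s^n(1/2)\notin J\}$, exploiting non-trivial dependence of $s\mapsto t_s^n(1/2)$ on $s$ (a transversality/distortion statement away from pre-images of the critical value). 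Summing over a countable basis of open intervals and applying a Borel-Cantelli argument then yields density of the critical orbit for almost every $s$.

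For the density of parameters with periodic $c_s$, I would invoke kneading theory in the spirit of Milnor-Thurston. The kneading invariant $K(s)$, defined as the itinerary of $1/2$ under $t_s$ with respect to the standard symbolic partition, depends monotonically on $s$ on $[\sqrt{2},2]$ and covers the admissible range. Eventually periodic kneading sequences are dense among admissible ones and correspond precisely to those parameters for which $1/2$ is (pre)periodic under $t_s$. Transporting this statement back through the conjugacy to $T_s$ gives density of $s$ with periodic $c_s$.

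The main obstacle is the full-measure statement in the first part. Topological transitivity of $t_s$ (which holds on the entire range $[\sqrt{2},2]$) only produces a residual set of transitive points, and there is no a priori reason for the specific point $1/2$ to belong to this set for positive-measure many $s$. Upgrading to a full-measure set of parameters requires genuine parameter-exclusion techniques of the kind developed by Jakobson, Benedicks-Carleson and Tsujii in related settings, relying on careful distortion estimates for the parameter dependence of the critical orbit. The density claim, by contrast, is comparatively soft once one accepts monotonicity and surjectivity of the kneading invariant.
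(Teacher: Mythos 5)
This theorem is not proved in the paper at all: it is quoted wholesale from the literature, with the a.e.\ transitivity of $c_s$ attributed to Brucks--Misiurewicz (Theorem~7 of \cite{BrucksMisiurewicz1996}) and the density of parameters with periodic critical point to Coven--Kan--Yorke (Lemma~5.5 of \cite{CovenKanYorke1988}). So there is no internal proof for you to match; the right question is whether your sketch is consistent with what those references actually do.

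Your outline for the density claim is sound and is essentially the standard kneading-theoretic argument: the kneading invariant of $t_s$ is continuous and monotone in $s$ and not locally constant (otherwise nearby tent maps would have equal topological entropy, which they do not), so in any parameter interval some iterate $s\mapsto t_s^n(1/2)$ crosses $1/2$, and the intermediate value theorem produces a parameter where $1/2$ is periodic. This transports through the explicit conjugacy to $T_s$ exactly as you say.

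For the full-measure transitivity claim your diagnosis of the difficulty is correct, but the specific Borel--Cantelli step you propose does not work as stated. The sets $E_n^J=\{s: t_s^n(1/2)\notin J\}$ are not small --- for each fixed $n$ they have measure close to the whole parameter interval --- so $\sum_n |E_n^J|$ diverges and Borel--Cantelli gives nothing about $\bigcap_n E_n^J$. The Brucks--Misiurewicz argument instead shows that the bad set $A_J=\bigcap_n E_n^J$ has no Lebesgue density points: using the transversality estimate you mention (that $\frac{\partial}{\partial s}t_s^n(1/2)$ grows like $s^n$ with bounded distortion as long as the orbit avoids a neighbourhood of $1/2$), any small parameter interval $W$ around a hypothetical density point of $A_J$ is mapped by some $s\mapsto t_s^n(1/2)$ onto a macroscopic interval in phase space, and a definite proportion of $W$ must then land in $J$; combined with equidistribution coming from the absolutely continuous invariant measure having density bounded away from $0$, this excludes a positive proportion of $W$ from $A_J$, contradicting density. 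This is a Lebesgue-differentiation/parameter-exclusion argument in the Benedicks--Carleson/Tsujii spirit, as you correctly anticipate, but it is \emph{not} a naive Borel--Cantelli over $n$; the Lasota--Yorke/Birkhoff preamble in your sketch, while true, is also not where the content lies, since Birkhoff only speaks about Lebesgue-a.e.\ phase point for fixed $s$ and says nothing about the specific point $1/2$.
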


By means of this result, we will obtain

\begin{thm}\label{t.bifsetfamiliy}

Consider the family of restricted tent maps $(T_s)_{s\in[\sqrt{2},2]}$.
Then

\begin{enumerate}

\item If $c_{s_0}$ is transitive, the steps of $\mathcal{B}_{T_{s_0}}$ are isolated from below and
the map $s\mapsto \mathcal{B}_{T_s}$ is continuous in $s_0$.\label{thmC: 1}

\item If $c_{s_0}$ is periodic, the map $s\mapsto \mathcal{B}_{T_s}$ is not continuous in $s_0$.\label{thmC: 2}

\item There exists a dense set of parameters $\mathcal{J}\ssq [\sqrt{2},2]$ so that $c_s$ is periodic and
there are steps of $\mathcal{B}_{T_s}$ which are accumulated from below whenever $s\in\mathcal{J}$.\label{thmC: 3}

\end{enumerate}

\end{thm}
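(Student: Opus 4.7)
The plan combines Theorem \ref{t.bifsetandcrit} with Theorem \ref{t.family}. Observe first that every $T_s$ with $s \in [\sqrt{2}, 2]$ is transitive and piecewise linear with constant slope $\pm s > 1$, hence piecewise uniformly expanding with the single critical point $c_s$. Therefore Theorem \ref{t.bifsetandcrit} applies directly. Part (\ref{thmC: 1}) is then an immediate consequence of Theorem \ref{t.bifsetandcrit}(1): transitivity of $c_{s_0}$ forces $\Per(T_{s_0}) \cap \Cri(T_{s_0}) = \emptyset$ and forces $\Cri(T_{s_0})$ to consist solely of transitive points, so both conclusions of Theorem \ref{t.bifsetandcrit}(1) apply.

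For part (\ref{thmC: 3}), Theorem \ref{t.family} furnishes a dense set of parameters with $c_s$ periodic. By Corollary \ref{c.cri}, a critical step $(a,b) \in \Bcal_{T_s}$ is accumulated from below if and only if $T_s$ is negative at $a$ and positive at $b$. For the tent map these signs are encoded by parity-type conditions on the itinerary of $c_s$ relative to the partition $\{[0,c_s],(c_s,1]\}$: each visit of the orbit to the decreasing branch $(c_s,1]$ flips the fold direction of $T_s^p$ at $c_s$, and an analogous combinatorial rule controls the sign at the other endpoint of the critical step. Using that tent parameters realize every admissible periodic kneading sequence, we find in any open subinterval of $[\sqrt{2}, 2]$ a parameter $s$ with $c_s$ periodic whose itinerary fulfills the required sign condition; the collection of all such parameters defines the dense set $\mathcal{J}$.

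For part (\ref{thmC: 2}), periodicity of $c_{s_0}$ places us under Theorem \ref{t.bifsetandcrit}(2): either (a) some critical step of $\Bcal_{T_{s_0}}$ is accumulated from below, or (b) $T_{s_0}$ is a discontinuity point of $g \mapsto \Bcal_g$ in the full space of continuous maps. The strategy in both cases is to select $s_n \to s_0$ with $c_{s_n}$ non-periodic (possible since periodic-$c_s$ parameters are countable), so that by part (\ref{thmC: 1}) every step of $\Bcal_{T_{s_n}}$ is isolated from below, with a uniform-in-$n$ isolation gap coming from the uniform expansion bound $s_n^p \geq (\sqrt{2})^p$ that underlies the proof of Lemma \ref{l.periodiccorner}. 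In case (a) this precludes Hausdorff approximation of the points below the critical step that accumulate on it in $\Bcal_{T_{s_0}}$, forcing discontinuity of $s \mapsto \Bcal_{T_s}$ at $s_0$. In case (b) we replicate the translation-based construction of Lemma \ref{l.critpercor2} within the tent family, noting that $s \mapsto T_s^p(c_s) - c_s$ has nonzero one-sided derivative at $s_0$ (a direct calculation using piecewise linearity), so the slope perturbation $s \to s_0$ plays the role of the translation $g_n$ in the lemma and opens a gap in $\Bcal_{T_{s_n}}$ near the critical step.

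I expect the main obstacle to be case (b) of part (\ref{thmC: 2}): the perturbations in Lemma \ref{l.critpercor2} are free continuous maps $g_n \circ f$, whereas the tent family permits only the one-parameter slope variation. Showing that the tent-family perturbation has the same escape effect as $g_n$ requires explicit control of the iterates of $T_s^p$ in a neighborhood of the critical orbit and of the "gap-generating" step in the proof of Lemma \ref{l.critpercor2} -- feasible thanks to the piecewise-linear structure of $T_s$, but the most delicate ingredient of the argument.
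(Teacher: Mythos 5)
Part (1) matches the paper: both cite Theorem~\ref{t.bifsetandcrit}(1) after noting $T_s$ is piecewise uniformly expanding with the single critical point $c_s$.

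Part (3) takes a genuinely different route. You invoke kneading theory (``tent parameters realize every admissible periodic kneading sequence'') and then claim one can realize, densely, periodic kneading sequences satisfying the sign conditions of Corollary~\ref{c.cri}. The paper instead builds the configuration by hand: it uses the explicit pre-critical point $x_s=1-1/s-1/s^2+1/s^3$ with $T_s^2(x_s)=c_s$, chooses $y_s$ slightly to its right to get a fixed sign pattern along four consecutive orbit points, shadows this pattern by a nearby parameter $s''$ with $c_{s''}$ periodic, and reads off the critical step $(a,b)$ with the required signs. Your approach would need a verified density result for admissible sequences with specified parity structure, which is a nontrivial kneading-theory input you do not supply; the paper's argument is elementary and self-contained. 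Your route is plausible but not a proof as stated.

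Part (2) has a genuine gap. You invoke the dichotomy of Theorem~\ref{t.bifsetandcrit}(2) and try to convert both alternatives into a discontinuity of $s\mapsto\Bcal_{T_s}$. For alternative (a) you claim a ``uniform-in-$n$ isolation gap'' for the steps of $\Bcal_{T_{s_n}}$ coming from the bound $s_n^p\geq(\sqrt2)^p$. This is not justified: the isolation radius at a step of period $p$ in Lemma~\ref{l.periodiccorner} depends on the neighborhood in which $f^p$ is expanding \emph{and monotone}, hence on the distance of the step's orbit to the critical point. As $s_n\to s_0$ with $c_{s_n}$ transitive, there are steps of $\Bcal_{T_{s_n}}$ of arbitrarily high period whose orbits pass arbitrarily close to $c_{s_n}$, so their isolation gaps shrink to zero and no uniform bound holds. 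Alternative (b) you acknowledge you cannot complete. The paper does not use the dichotomy at all. It observes directly that $0=T_s^2(c_s)$ is periodic and $T_s$ is positive at $0$, and then proves the discontinuity within the family via an explicit transversality computation: Proposition~\ref{prop: derivative of the critical point} shows $\frac{d}{ds}(T_s^{n_{0,c_{s_0}}}(0)-c_s)\neq 0$ at $s=s_0$ (by the careful estimate $|\frac{d}{ds}T_s^j(0)|\geq 1/(s-1)$), and this is used in Proposition~\ref{prop: discontinuity of Ts at periodic parameters} to show that, for $s$ in a one-sided interval near $s_0$, a whole rectangle $(0,\eps)\times B_{\eps'}(b)$ avoids $\Bcal_{T_s}$ while the corresponding horizontal segment lies in $\Bcal_{T_{s_0}}$. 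You mention the key ingredient (nonzero derivative of $s\mapsto T_s^p(c_s)-c_s$) and correctly identify case (b) as the delicate step, but neither the transversality computation nor the subsequent uniform escape argument is carried out, so part (2) is not established.
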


We devote the rest of the section to the proof of this statement.
Clearly, point \eqref{thmC: 1} follows from Theorem~\ref{t.bifsetandcrit}.
In view of Lemma~\ref{l.critpercor}, point \eqref{thmC: 3} can be deduced from the next proposition.

\begin{prop}
	There is a dense set of parameters $\mathcal{J}\subset [\sqrt{2},2]$
	such that for every $s\in\mathcal{J}$ we have a step
	$(a,b)\in\mathcal{B}_{T_s}$ where $T_s$ is negative at $a$ and positive at $b$.
\end{prop}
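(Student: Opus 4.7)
The plan is to combine the density of periodic-$c_s$ parameters given by Theorem~\ref{t.family} with an explicit kneading-sequence construction in order to identify a dense subset $\mathcal{J}$ where the required step exists.

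For each $s$ with $c_s$ of minimal period $p$, I would write $\mathcal{O}(c_s)=\{y_1<y_2<\ldots<y_p\}$ in spatial order; since the orbit automatically contains $y_1=0$ and $y_p=1$, each pair $(y_i,y_{i+1})$ with $2\leq i\leq p-2$ is a critical step of $\mathcal{B}_{T_s}$. Writing $y_i=T_s^{m_i}(c_s)$, a direct calculation using the fact that $T_s$ has slope $+s$ on $[0,c_s]$ and $-s$ on $[c_s,1]$ shows that the image of a small neighborhood of $y_i$ under $T_s^p$ has $y_i$ as one endpoint and extends to the right (so $T_s$ is positive at $y_i$) or to the left (so $T_s$ is negative at $y_i$) according to a sign $\sigma_{m_i}\in\{\pm 1\}$ obeying $\sigma_1=-1$ and $\sigma_{m+1}=\sigma_m\cdot\operatorname{sgn}(c_s-T_s^m(c_s))$. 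The statement then reduces to producing, in any neighborhood of any $s_0\in[\sqrt{2},2]$, a periodic-$c_s$ parameter $s$ whose spatial sign sequence exhibits an adjacent pattern $(\sigma_{m_i},\sigma_{m_{i+1}})=(-1,+1)$ at some internal position.

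To produce such parameters, I would invoke kneading theory for tent maps: the kneading-invariant map $s\mapsto K(s)$ is continuous and monotone with respect to the parity-lexicographic order on admissible sequences, and every admissible periodic kneading sequence is realized by a unique parameter. Hence, given $s_0$ and $\varepsilon>0$, I would construct a periodic admissible sequence agreeing with $K(s_0)$ on a long initial block (so the corresponding parameter $s$ lies within $\varepsilon$ of $s_0$) whose short tail is chosen to enforce the desired sign transition. A natural target is the step involving $c_s$ itself: controlling the parity of the total number of $R$-symbols in one period fixes $\sigma_p$ (the sign at $c_s$), while controlling the parity of the $R$-count up to the spatial neighbor $y^+=T_s^{m_+}(c_s)$ of $c_s$ fixes $\sigma_{m_+}$ (the sign at $y^+$). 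Arranging opposite parities then yields the desired pattern for the step $(c_s,y^+)$ or, symmetrically, for $(y^-,c_s)$.

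The main obstacle I anticipate is the combinatorial verification that the constructed kneading sequences remain admissible for the tent family and that the chosen orbit points are actually spatial neighbors. Spatial adjacency is a subtle function of the itinerary via the unimodal (parity) order on subshifts, so identifying $y^{\pm}$ from the kneading data requires some care and a case analysis depending on whether $c_s$ sits adjacent to $0$, to $1$, or strictly inside the orbit. Once these combinatorial issues are settled, density of $\mathcal{J}$ follows routinely from the continuity of $s\mapsto K(s)$.
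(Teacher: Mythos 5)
Your plan takes a genuinely different route from the paper. The paper works directly with the dynamics: it fixes the explicit second preimage $x_s = 1 - 1/s - 1/s^2 + 1/s^3$ of $c_s$, takes a nearby point $y_s$ so that $0 < y_s < T_s^2(y_s) < c_s < T_s(y_s) < T_s^3(y_s) < 1$, and then uses Theorem~\ref{t.family} twice (first to pick a nearby transitive parameter $s'$ whose critical orbit shadows $y_{s'}$, then a nearby periodic parameter $s''$ preserving that ordering). The resulting four orbit points $a' < b' < c_{s''} < c' < d'$ have alternating signs forced by the elementary orientation rule, so one of the intervals $[a',b']$ or $[b',c']$ must contain a $(-,+)$ transition in the spatial sign sequence, producing the desired step. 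This avoids symbolic dynamics entirely.

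Your reductions are correct (including the sign recursion $\sigma_{m+1} = \sigma_m\cdot\operatorname{sgn}(c_s - T_s^m(c_s))$ and the observation that $0,1 \in \mathcal{O}(c_s)$ so internal pairs are critical steps). However, the kneading-theoretic construction has a genuine gap beyond the one you flag. When you say you will ``control the parity of the $R$-count up to the spatial neighbor $y^+ = T_s^{m_+}(c_s)$,'' you treat $m_+$ as if it were a fixed target, but $m_+$ and indeed which orbit point is the spatial neighbor are themselves functions of the kneading sequence via the parity-lexicographic order on cylinders. Perturbing the tail to flip one parity can change $m_+$ (and the identity of $y^+$) simultaneously, so the two parities are not independent knobs. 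Together with the admissibility constraint (every shift must be $\preceq$ the sequence), this makes the combinatorial step you defer as ``routine'' actually the substantive part of the argument; as written, the construction is not closed. The paper's four-point shadowing device is precisely what eliminates this bookkeeping: since all four points lie in one connected piece of orbit and interlace around $c_{s''}$, the signs alternate by a one-line orientation count, with no reference to itineraries or spatial ordering of the whole orbit.
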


\begin{proof}

Observe that for all $s\in(1,2]$ the point $x_s=1-1/s-1/s^2+1/s^3$ verifies
$x_s\in(0,c_s)$,
$T_s(x_s)\in (c_s,1)$ and $T_s^2(x_s)=c_s$.
By choosing $y_s$ sufficiently close and to the right of $x_s$, we can guarantee
that $0<y_s <T_s^2(y_s)<c_s<T_s(y_s)<T_s^3(y_s)<1$.
In particular, $T_s$ is order preserving in $y_s$ as well as in $T_s^2(y_s)$ and order reversing in $T_s(y_s)$ as
well as in $T_s^3(y_s)$.

Given $s\in [\sqrt{2},2]$, by Theorem~\ref{t.family} there is an
arbitrarily close $s'$ such that $c_{s'}$ is transitive.
Observe that there is $n\in\N$ with
$0<T_{s'}^{n}(c_{s'}) <T_{s'}^{n+2}(c_{s'})<c_{s'}<T_{s'}^{n+1}(c_{s'})<T_{s'}^{n+3}(c_{s'})<1$
(pick $n$ such that $T_{s'}^n(c_{s'})$ is sufficiently close to $y_{s'}$).
Now, Theorem~\ref{t.family} allows to pick $s''$ such that $c_{s''}$ is periodic
and such that $s''$ is sufficiently close to $s'$ to guarantee
$0<T_{s''}^{n}(c_{s''}) <T_{s''}^{n+2}(c_{s''})<c_{s''}<T_{s''}^{n+1}(c_{s''})<T_{s''}^{n+3}(c_{s''})<1$.
Hence, at $s''$ we have $a'<b'<c'<d'$ where $a'=T^n_{s''}(c_{s''})$, $b'=T^{n+2}_{s''}(c_{s''})$,
$c'=T^{n+1}_{s''}(c_{s''})$ and $d'=T^{n+3}_{s''}(c_{s''})$.
Note that either $T_{s''}$ is negative at $a'$, positive at $b'$, negative at $c'$ and positive at $d'$
or the other way around, that is,
$T_{s''}$ is positive at $a'$, negative at $b'$, positive at $c'$ and negative at $d'$.

In the first case, choose $a\in \mc O(c_{s''})\cap [a',b']$ to be such that
$T_{s''}$ is negative at $a$ and $T_{s''}$ is positive at each element of $(a,b']\cap \mc O(c_{s''})$.
Choose $b$ to be the smallest element of $(a,b']\cap \mc O(c_{s''})$.
Then $(a,b)$ is a periodic corner point with $T_{s''}$ negative at $a$ and positive at $b$.
In the second case (when $T_{s''}$ is positive at $a$ etc.), we obtain a similar statement
by dealing with $b'$ instead of $a'$ and $c'$ instead of $b'$.

As $s\in [\sqrt{2},2]$ is arbitrary and $s''$ can
be chosen arbitrarily close to $s$, the statement follows.
\end{proof}

With regards to point \eqref{thmC: 2} of Theorem~\ref{t.bifsetfamiliy}, observe that
if $c_s$ is periodic, then $0=T_s^2(c_s)$ is periodic, too.
Therefore, as $T_s$ is clearly positive at $0$, Lemma~\ref{l.critpercor2} yields that $T_s$ is a discontinuity point
of the mapping $f\mapsto \Bcal_f$.\footnote{Note that formally speaking, as our present definition of $\Bcal_f$ excludes
points with coordinate entries equal to zero, we could not apply Lemma~\ref{l.critpercor2}.
However, this issue is of a rather formal nature (see also the remark in the
introduction) and will further not play a role in the discussion of the discontinuity
of $s\mapsto \Bcal_{T_s}$ as this discussion has to be carried out explicitly anyway.}
In Proposition~\ref{prop: discontinuity of Ts at periodic parameters} (see below),
we will show that this discontinuity is already visible within the family
$(T_s)_{s\in[\sqrt{2},2]}$.
To see this, we first make some simple technical observations.

Note that for all $x\in [0,1]$ and all $\ell\in \N$ with $T^j_s(x)\neq c_s$ ($j=0,\ldots,\ell$),
we have that $T_s^{\ell+1}(x)$ is differentiable with respect to $s$ (as well as $x$) and
\begin{align*}
 \frac{d}{ds} T^{\ell+1}_s(x)=\left(\frac{\partial}{\partial s} T_s\right)\left(T_s^\ell(x)\right)+
 T_s'\left(T_s^\ell(x)\right)\cdot \frac{d}{ds} T^\ell_s(x).
\end{align*}
As
\begin{align}\label{eq: derivatives of Ts}
 T_s'(x)=
	\begin{cases}
             s & \text{if } x\in [0,c_s)\\
             -s & \text{if } x\in (c_s,1]
        \end{cases}
 \qquad \text{and} \qquad
 \frac{\partial}{\partial s} T_s(x)=
	\begin{cases}
		-1+x& \text{if } x\in [0,c_s)\\
                1-x& \text{if } x\in (c_s,1]
        \end{cases},
\end{align}
we hence have
\begin{align}\label{eq: recursive formula for d/ds T}
 \frac{d}{ds} T^{\ell+1}_s(x)=
 \begin{cases}
	-1+T_s^\ell(x)+s\cdot \frac{d}{ds} T^\ell_s(x)
	&\text{if } T^\ell_s(x)\in [0,c_s)\\
        1-T_s^\ell(x)-s\cdot \frac{d}{ds} T^\ell_s(x)
        &\text{if } T^\ell_s(x)\in (c_s,1]
 \end{cases}.
\end{align}

\begin{prop}\label{prop: derivative of the critical point}
 Consider the family of restricted tent maps $(T_s)_{s\in[\sqrt{2},2]}$.
 If $c_{s_0}$ is periodic, then $\frac{d}{ds} (T_s^{n_{0,c_{s_0}}}(0)-c_s)\neq 0$ at $s=s_0$.

 Moreover, if $\frac{d}{ds} (T_s^{n_{0,c_{s_0}}}(0)-c_s)> 0$, then
 $\left(T_s^{n_{0,c_{s_0}}}\right)'(0)=-s^{n_{0,c_{s_0}}}$.
 Otherwise $\left(T_s^{n_{0,c_{s_0}}}\right)'(0)=s^{n_{0,c_{s_0}}}$.
\end{prop}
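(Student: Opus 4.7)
The plan is to compute the derivative $\tfrac{d}{ds}(T_s^n(0) - c_s)$ at $s = s_0$ explicitly, where $n := n_{0, c_{s_0}}$, via a substitution that linearises the recursion \eqref{eq: recursive formula for d/ds T}. Write $P_\ell(s) := T_s^\ell(0)$ and $x_\ell := P_\ell(s_0)$. By minimality of $n$, each $x_\ell$ with $0 \leq \ell \leq n-1$ lies strictly on one side of $c_{s_0}$, so on a sufficiently small neighbourhood of $s_0$ the branch sign $\epsilon_\ell := \operatorname{sign}(c_{s_0} - x_\ell) \in \{\pm 1\}$ correctly records which monotone piece of $T_s$ is applied at the $\ell$-th iterate. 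In particular $\epsilon_0 = +1$, and the chain rule yields $(T_s^n)'(0) = \sigma\, s^n$ for $s$ in this neighbourhood, where $\sigma := \prod_{\ell=0}^{n-1}\epsilon_\ell \in \{\pm 1\}$; thus the ``moreover'' statement will follow as soon as we identify the sign of $\tfrac{d}{ds}(T_s^n(0) - c_s)|_{s_0}$.

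Next, introduce $R_\ell(s) := P_\ell(s) - c_s$. A direct check from the piecewise-affine form $T_s(x) = 1 - s|x - c_s|$ yields the clean recursion
\[
R_{\ell+1}(s) = \epsilon_\ell\, s\, R_\ell(s) + \tfrac{1}{s}, \qquad R_0(s) = \tfrac{1}{s} - 1.
\]
Iterating produces the closed form $R_n(s) = (\sigma/s)\,Q(s)$ with
\[
Q(s) = -s^{n+1} + s^n + \sigma_1 s^{n-1} + \cdots + \sigma_{n-1}s + \sigma, \qquad \sigma_\ell := \prod_{j=0}^{\ell-1}\epsilon_j.
\]
The periodicity of $c_{s_0}$ gives $R_n(s_0) = 0$, whence $Q(s_0) = 0$, and differentiation reduces the proposition to showing $R_n'(s_0) = (\sigma/s_0)\,Q'(s_0)$ is nonzero with the correct sign.

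It remains to show $Q'(s_0) < 0$: this yields both non-vanishing and the sign relation simultaneously. Factorising $Q(s) = s^n(K(1/s) - s)$ with $K(t) := \sum_{\ell=0}^{n}\sigma_\ell t^\ell$ (so that $K(1/s_0) = s_0$), a brief computation using $Q(s_0)=0$ produces $Q'(s_0) = -s_0^{n-2}\bigl(s_0^2 + K'(1/s_0)\bigr)$, so the target becomes the strict inequality $s_0^2 + K'(1/s_0) > 0$; equivalently, $s \mapsto K(1/s) - s$ is strictly decreasing at its zero $s_0$. This is the main obstacle: it expresses the transversal crossing of $P_n(s)$ through $c_s$ as $s$ varies through $s_0$, and ultimately reflects the strict monotonicity of the kneading invariant in $s$ for the constant-slope tent family. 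Once $Q'(s_0) < 0$ is in hand, the identity $R_n'(s_0) = (\sigma/s_0)\,Q'(s_0)$ forces $\operatorname{sign} R_n'(s_0) = -\sigma$: accordingly, $R_n'(s_0) > 0$ compels $\sigma = -1$ and thus $(T_s^n)'(0) = -s^n$, while $R_n'(s_0) < 0$ gives the opposite sign, as claimed.
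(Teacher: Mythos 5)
Your reformulation is elegant and the algebra checks out: the recursion for $R_\ell(s) = T_s^\ell(0) - c_s$ is correct, the closed form $R_n(s) = (\sigma/s)\,Q(s)$ with $Q(s) = -s^{n+1} + s^n + \sigma_1 s^{n-1} + \cdots + \sigma_n$ is verified by induction, the reduction $R_n'(s_0) = (\sigma/s_0)\,Q'(s_0)$ from $Q(s_0) = 0$ is clean, and the sign bookkeeping connecting $\operatorname{sign} R_n'(s_0) = -\sigma$ to the claimed value of $(T_s^n)'(0)$ is exactly right. However, there is a genuine gap where you declare ``It remains to show $Q'(s_0) < 0$'' and then do not show it, instead asserting that it ``ultimately reflects the strict monotonicity of the kneading invariant.'' This is precisely the content of the proposition, and the naive bound $K'(1/s_0) \geq -\sum_{\ell\geq 1}\ell\, s_0^{-(\ell-1)} = -s_0^2/(s_0-1)^2$ is not sufficient for $s_0 \leq 2$, so something more than the triangle inequality is needed. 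Moreover, even granting monotonicity of the kneading map (itself a nontrivial theorem you neither prove nor cite), that gives a sign change of $R_n$ across $s_0$ but not \emph{a priori} a nonvanishing derivative at $s_0$ — the crossing could be tangential — so the appeal does not close the gap even in principle.

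The paper handles this step by a self-contained, explicit argument. It works directly with $\frac{d}{ds}T_s^j(0)$ and proves the self-propagating lower bound $\bigl|\frac{d}{ds}T_s^{j}(0)\bigr|\geq 1/(s-1)$: once it holds for some $j_0 < n$, the recursion \eqref{eq: recursive formula for d/ds T} gives $\bigl|\frac{d}{ds}T_s^{j+1}(0)\bigr| \geq s\cdot\frac{1}{s-1}-1=\frac{1}{s-1}$ for all larger $j$. Since $\bigl|\frac{d}{ds}c_s\bigr|=1/s^2<1/(s-1)$, this yields the nonvanishing. The base case is settled by a short case analysis on $s_0$: for $s_0=(1+\sqrt{5})/2$ (the only parameter with $n=1$) one computes directly; for $s_0\geq 3/2$ one checks $\bigl|\frac{d}{ds}T_s^2(0)\bigr|=2s-1\geq 1/(s-1)$; for $s_0<3/2$ the first three iterates of $0$ are all to the right of $c_{s_0}$, giving $T_s^4(0)=s^4-s^3-s^2+s$ and hence $\frac{d}{ds}T_s^4(0)=4s^3-3s^2-2s+1\geq 1/(s-1)$. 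The sign of $(T_s^n)'(0)$ is then read off from the parity tracked through the same recursion — essentially your $\sigma$, but handled pointwise. To repair your proof you would need to supply a proof of $Q'(s_0)<0$ of comparable concreteness; the paper's propagating inequality is the natural candidate to translate into your $Q$-language.
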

\begin{proof}
First, note that $T_{s_0}^{j}(0)\neq c_{s_0}$ for all $j=0,\ldots,n_{0,c_{s_0}}-1$
so that the above expressions are indeed differentiable.
For the first part, we have to consider three cases.

\noindent
\emph{Case 1: $s_0=(1+\sqrt{5})/2$.}
This is the only case in which $n_{0,c_{s_0}}=1$.
By \eqref{eq: derivatives of Ts}, we have $\frac{d}{ds} (T_s(0)-c_s)=-1+1/s^2< 0$.

In the remaining cases, we will show that, in fact,
\begin{align}\label{eq: d/ds T bigger than d/ds c}
	\left|\frac{d}{ds} T_s^{n_{0,c_{s_0}}}(0)\right|\geq 1/(s-1)>\left |\frac{d}{ds} c_s \right|=1/s^2
\end{align}
at $s=s_0$.
To that end, note that \eqref{eq: recursive formula for d/ds T} yields that if
\begin{align}\label{eq: d/ds T bigger than d/ds c second}
	\left|\frac{d}{ds} T_s^{j}(0)\right|\geq 1/(s-1)
\end{align}
for some $j=j_0<n_{0,c_{s_0}}$,
then \eqref{eq: d/ds T bigger than d/ds c second} also holds for $j=j_0+1$.
Hence,
it suffices to show that there is
some $j\leq n_{0,c_{s_0}}$ for which \eqref{eq: d/ds T bigger than d/ds c second} holds
in order to prove \eqref{eq: d/ds T bigger than d/ds c} and hence the first part of the statement.

\noindent
\emph{Case 2: $s_0\in [3/2,2]\setminus \{(1+\sqrt{5})/2\}$.}
By an immediate computation, we have $\left|\frac{d}{ds} T_s^{2}(0)\right|=2s-1$ at all $s\in[\sqrt{2},2]$ and
hence $\left|\frac{d}{ds} T_s^{2}(0)\right|\geq 1/(s-1)$ for all $s\geq 3/2$.

\noindent
\emph{Case 3: $s_0< 3/2$.}
Observe that in this case we have $T_{s_0}(0),T_{s_0}^2(0),T_{s_0}^3(0)\in(c_{s_0},1]$.
It hence suffices to show
\eqref{eq: d/ds T bigger than d/ds c second} for $j=4$.
Now, if $T_{s}(0),T_{s}^2(0),T_{s}^3(0)\in(c_{s},1]$, we have $T_{s}^4(0)={s}^4-{s}^3-s^2+s$.
Therefore, $\frac{d}{ds} T_{s}^4(0)=4s^3-3s^2-2s+1$.
By elementary means, we see that this indeed gives $\frac{d}{ds} T_{s}^4(0)\geq 1/(s-1)$ for all $s<3/2$
which finishes the proof of the first part.

For the second part,
notice that if $\left|\frac{d}{ds} T_s^{j}(0)\right|\geq 1/(s-1)$ for some $j< n_{0,c_{s}}$ and
$\frac{d}{ds} T_s^{j}(0)$ is positive (negative), then
\eqref{eq: recursive formula for d/ds T} gives that
$\frac{d}{ds} T_s^{j+1}(0)$ is positive (negative) if and only if
$T_s^{j}(0)\in [0,c_{s})$.
With this in mind, an inspection of the above cases shows that whenever
we have $\frac{d}{ds} (T_s^{n_{0,c_{s_0}}}(0)-c_s)< 0$ it holds that
$\#\{ j\in\{0,\ldots,n_{0,c_{s_0}}-1\} \: T_{s_0}^j(0)\in (c_{s_0},1]\}$
is even.
Hence, in this case we obtain by the chain rule that
$\left(T_s^{n_{0,c_{s_0}}}\right)'(0)=T_s'\left(T_s^{n_{0,c_{s_0}}-1}(0)\right)\cdot \ldots\cdot T_s'(0)=s^{n_{0,c_{s_0}}}$.
The other case is similar.
\end{proof}

\begin{prop}\label{prop: discontinuity of Ts at periodic parameters}
Consider the family of restricted tent maps $(T_s)_{s\in[\sqrt{2},2]}$.
If $c_{s_0}$ is periodic, then the map $s\mapsto \mathcal{B}_{T_s}$ is not continuous in $s_0$.
\end{prop}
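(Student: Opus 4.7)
The argument parallels the construction in Lemma~\ref{l.critpercor2}, using Proposition~\ref{prop: derivative of the critical point} to realize the required perturbation within the family $(T_s)$ by varying the parameter $s$. Set $n := n_{0, c_{s_0}}$ and $p := n + 2$, the common period of $c_{s_0}$ and $0$ under $T_{s_0}$. By Proposition~\ref{prop: derivative of the critical point}, we may assume without loss of generality that $D := \frac{d}{ds}(T_s^n(0) - c_s)$ evaluated at $s = s_0$ is strictly positive; the opposite sign case is handled symmetrically by taking $s < s_0$ instead of $s > s_0$ below. A direct iteration using the piecewise linear formula for $T_s$ along the orbit of $0$ yields
\begin{equation*}
T_s^{p}(0) = s^2 (T_s^n(0) - c_s) = s^2 D (s - s_0) + o(s - s_0),
\end{equation*}
a small positive quantity for $s > s_0$ near $s_0$. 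This positive drift of the orbit of $0$ plays the same role in our setting as the additive shift $g_n$ used in the proof of Lemma~\ref{l.critpercor2}.

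The witness for discontinuity will be a point on the terminal horizontal segment of the stair associated to the critical orbit in $\Bcal_{T_{s_0}}$. Let $b^*$ denote the smallest positive element of $\Ocal_{T_{s_0}}(0)$. Since this periodic orbit has no elements in $(0, b^*)$, it follows that $(x, b^*) \in \Bcal_{T_{s_0}}$ for every $x \in (0, b^*)$. The plan is to show that for each small $\eta > 0$ there exist a parameter $s(\eta) > s_0$, with $s(\eta) \to s_0$ as $\eta \to 0^+$, and a neighborhood $U(\eta) \subset \Delta$ of $(\eta, b^*)$ such that $\Bcal_{T_{s(\eta)}} \cap U(\eta) = \emptyset$. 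To accomplish this, one selects $s(\eta)$ so that $T_{s(\eta)}^p(0)$ falls in a carefully controlled range around $\eta$, and then verifies for every $(a', b') \in U(\eta)$: first, that the orbit of $a'$ under $T_{s(\eta)}$ shadows the orbit of $0$ for the first $p$ iterations and enters the hole $(a', b')$ at step $p$; and second, writing $b^* = T_{s_0}^j(0)$ for the appropriate $j$, that a derivative computation in the spirit of Proposition~\ref{prop: derivative of the critical point} shows $T_s^{p-j}(b^*)$ crosses zero transversally at $s = s_0$, so that the orbit of $b'$ enters $(a', b')$ at step $p-j$. Hence neither coordinate survives in the hole $(a', b')$.

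The main obstacle lies in the simultaneous verification that \emph{both} endpoints of every $(a', b') \in U(\eta)$ fail to survive under $T_{s(\eta)}$, requiring a delicate balance of the scales $\eta$, $|s(\eta) - s_0|$, and the radius of $U(\eta)$, further complicated by the exponential amplification of orbit errors by the expansion of $T_s$. Once this is established, Proposition~\ref{prop: Bf is lower semi-continuous} together with the above implies that the map $s \mapsto \Bcal_{T_s}$ fails to be continuous in the Hausdorff metric at $s_0$, as desired.
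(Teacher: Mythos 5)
Your high-level strategy is the same as the paper's: use the non-zero derivative from Proposition~\ref{prop: derivative of the critical point} to make $T_s^p(0)$ drift off zero as $s$ moves away from $s_0$, and use this to show that a neighborhood of a point on the terminal horizontal segment $\{(a,b^*):0<a<b^*\}\subset\Bcal_{T_{s_0}}$ becomes disjoint from $\Bcal_{T_s}$. However, your choice of side, $s>s_0$ when $D>0$, is opposite to the paper's ($s<s_0$) and this difference is not cosmetic: it is exactly the wrong side, and the central step of your argument breaks.

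Here is why. Proposition~\ref{prop: derivative of the critical point} tells you that when $D>0$ we also have $(T_s^n)'(0)=-s^n$. For $s<s_0$ (paper's choice) we then have $T_s^n(0)<c_s$, and since $(T_s^n)'(0)<0$, $T_s^n(x)=T_s^n(0)-s^n x<c_s$ \emph{for all} $x\in[0,\eps]$; hence no orbit of a point in $[0,\eps]$ meets $c_s$ in the first $p$ steps, and $T_s^p|_{[0,\eps]}$ is an orientation-preserving affine map with slope $s^p>1$ and $T_s^p(0)>0$, so every $x\in(0,\eps]$ escapes $[0,\eps]$ under iteration. For $s>s_0$ (your choice), instead $T_s^n(0)>c_s$ while $(T_s^n)'(0)=-s^n<0$, so $T_s^n(x)$ crosses $c_s$ at $x_*:=(T_s^n(0)-c_s)/s^n\in(0,\eps)$. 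Then $T_s^n(x_*)=c_s$, hence $T_s^p(x_*)=T_s^2(c_s)=0$, and $T_s^p|_{[0,\eps]}$ is the tent-like map $x\mapsto s^p|x-x_*|$. In particular your claim that ``the orbit of $a'$ shadows the orbit of $0$ for the first $p$ iterations'' is false for every $a'$ on the far side of $x_*$: at step $n$ the orbit of $a'$ is on the opposite side of $c_s$ from the orbit of $0$. Worse, this folded return map has a fixed point $x_{fp}=s^px_*/(s^p+1)\in(0,\eps)$ and an entire invariant Cantor set near $0$, and for $a'$ in that set the $T_s$-orbit never enters a hole $(a',b')$ with $b'$ near $b^*$; thus $(a',b')\in\Bcal_{T_s}$, and these points accumulate at the diagonal end of the segment you are trying to delete. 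Your plan can only be salvaged by confining $U(\eta)$ strictly to the right of $s^px_*/(s^p-1)\approx\eta/(s^p-1)$, redoing the estimate for $T_s^p(a')$ via the fold formula $T_s^p(a')=s^p(a'-x_*)$ rather than via shadowing, and then also repairing the treatment of $b'$; none of these steps are present. Choosing $s<s_0$, as the paper does, avoids all of this and yields a uniform (in $s$) box $(0,\eps)\times B_{\eps'}(b^*)$ missing $\Bcal_{T_s}$, without needing to couple $s$ and $\eta$.
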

\begin{proof}
 Let $p$ be the minimal period of $0$ under $T_{s_0}$ and
 let $b$ be the element in $\mc O(0)\setminus \{0\}$ which is the closest to $0$.
 Note that the horizontal segment $H=\{(a,b)\:0<a<b\}$ is entirely contained in $\Bcal_{T_{s_0}}$.
 Our goal is to show that there is some $\eps_0>0$ such that $(0,\eps_0)\times B_{\eps_0}(b)\ssq \Bcal_{T_s}^c$
 for $s$ sufficiently close to $s_0$.
 This clearly proves the statement.

 By Proposition~\ref{prop: derivative of the critical point}, we either have
 $\frac{d}{ds} (T_{s}^{n_{0,c_{s_0}}}(0)-c_s)>0$ and $\left(T_s^{n_{0,c_{s_0}}}\right)'(0)=-s^{n_{0,c_{s_0}}}$ or
 $\frac{d}{ds} (T_{s}^{n_{0,c_{s_0}}}(0)-c_s)<0$ and $\left(T_s^{n_{0,c_{s_0}}}\right)'(0)=s^{n_{0,c_{s_0}}}$.
 Set $\mc I'=(s_0-\delta,s_0)$ for some $\delta>0$.\footnote{Note that $s_0$ is
 necessarily different from $\sqrt{2}$, since $T_{\sqrt{2}}(0)$
 coincides with the unique fixed point of $T_{\sqrt{2}}$ which gives that $c_{\sqrt{2}}$ is not periodic.
 Hence, $\mc I'$ is always an interval which has a non-trivial intersection with $[\sqrt{2},2]$.}
 Observe that $T_s^j(x)\neq c_s$ $(x\in[0,\eps],\ s\in \mc I',\ j=0,\ldots,p)$
 and hence, in fact, $T_s^p(x)-T_s^p(y)=s^p(x-y)$ for all $x,y\in[0,\eps]$ and each $s\in \mc I'$
 whenever $\eps>0$ and $\delta$ are sufficiently small.

 W.l.o.g.\ we may assume $\eps< b/(4s^p)$ as well as
 $T_s^p(0),T_s^{n_{b,0}}(b) <\eps/2$ ($s\in \mc I'$),
 where $n_{b,0}<p$ is such that $T_{s_0}^{n_{b,0}}(b)=0$.
 Clearly, $T_s^{p}(x)=T_s^p(0)+s^{p} x<\eps+s^p\eps/2<b/2$ whenever $x\in [0,\eps]$.

 Altogether, the above shows that for each $x\in(0,\eps]$ and every $s\in \mc I'$ we have some $\ell\in \N$ with
 $T_s^{\ell p}(x)\in (\eps,b/2)$.
 Accordingly, if $|x|<\eps$ and $|b-y|<\eps/(2s_0^{n_{b,0}})$, we have some $\ell_x,\ell_y\in \N$ such that
 $T_s^{\ell_x p}(x),T_s^{\ell_y p}(y)\in (\eps,b/2)$ and hence $(x,y)\in \Bcal_{T_s}^c$ for every $s\in \mc I'$.
 Therefore, $\Bcal_{T_s}\cap \left((0,\eps)\times B_{\eps/(2s_0^{n_{b,0}})}(b) \right)=\emptyset$ for each $s\in \mc I'$.
\end{proof}
\bibliography{open-systems-literature}{}
\bibliographystyle{unsrt}
\vspace{-0.38cm}
\end{document}